\documentclass{amsart}

\usepackage[margin=1.2in]{geometry}
\usepackage{amssymb}
\usepackage{mathrsfs}
\usepackage{thmtools}
\usepackage{graphicx}
\usepackage{pbox}

%
%

\usepackage{csquotes}

%
%

\usepackage{caption}

%
%

\usepackage{cite}

%
%

\newtheorem{theorem}{Theorem}[section]
\newtheorem{lemma}[theorem]{Lemma}
\newtheorem{conjecture}{Conjecture} 

\theoremstyle{definition}
\newtheorem{definition}[theorem]{Definition}

\newtheorem{corollary}[theorem]{Corollary}

\theoremstyle{remark}
\newtheorem{remark}[theorem]{Remark}

\begin{document}

%
%

\title{Genera of knots in the complex projective plane}

\author{Jake Pichelmeyer}

%
%

\begin{abstract}
Our goal is to systematically compute the $\mathbb{C}P^2$-genus of all prime knots up to 8-crossings. We obtain upper bounds on the $\mathbb{C}P^2$-genus via coherent band surgery. We obtain lower bounds by obstructing homological degrees of potential slice discs. The obstructions are pulled from a variety of sources in low-dimensional topology and adapted to $\mathbb{C}P^2$. There are 27 prime knots and distinct mirrors up to 7-crossings. We now know the $\mathbb{C}P^2$-genus of all but 2 of these knots. There are 64 prime knots and distinct mirrors up to 8-crossings. We now know the $\mathbb{C}P^2$-genus of all but 9 of these knots. Where the $\mathbb{C}P^2$-genus was not determined explicitly, it was narrowed down to 2 possibilities. As a consequence of this work, we show an  infinite family of knots such that the $\mathbb{C}P^2$-genus of each knot differs from that of it's mirror.
\end{abstract}

\maketitle

%
%

\section{Introduction}

Throughout this paper we work in the smooth category. All manifolds are considered to be connected, orientable, oriented, and compact unless otherwise stated. $D^n$ stands for a $n$-disc with boundary $S^{n-1}$, while $B^n$ stands for an open $n$-ball with no boundary. We use general notation and orientation conventions that are consistent with Livingston and Naik's excellent text on knot concordance \cite{livnai}. In particular, if $K$ is a knot in $S^3$, then $mK$ stands for it's mirror, which is the same knot in $S^3$ but with all positive crossings changed to negative crossings and vice versa. The same holds for links. We use notation for specific knots that aligns with Knotinfo \cite{knotinfo}. In particular, when one clicks on a knot $K$ at the Knotinfo site, we take the diagram shown on the left to be the knot $K$ and the diagram shown on the right to be the knot $mK$. For links we use the notation of Linkinfo \cite{linkinfo}. The top left corner diagram is $L$, while the bottom right diagram is $mL$.

\subsection{Background} Invariants of knots derived from surface genera have a long history dating back to 1935. Seifert had shown that for any knot $K$, one can algorithmically construct an orientable surface $S$ embedded in $S^3$ with boundary $K$ (a so-called \textit{Seifert surface}). Once it was known that every knot $K$ bounds a surface $S_K$ in $S^3$, it was natural to ask what the minimal genus of such a surface could be. The\textit{3-genus} (or \textit{Seifert genus}) $g_3(K)$ of a knot $K$ is defined to be just that \cite{Seifert1935}. By 1966, Fox and Milnor had extended this into 4 dimensions, giving rise to the \textit{smooth 4-genus} (or \textit{slice genus}). The smooth 4-genus $g_4(K)$ of a knot $K$ is defined to be the least genus among all orientable surfaces smoothly and properly embedded in $D^4$ with boundary $K$ \cite{Fox1962a} \cite{Fox1962b} \cite{Fox1966}. Since 1966, knot invariants involving the word 'genus' have proliferated. In the 3-dimensional world there is the \textit{3-genus} and the \textit{non-orientable 3-genus}. In the 4-dimensional world there is the \textit{smooth 4-genus}, the \textit{topological 4-genus}, the \textit{nonorientable smooth 4-genus}, the \textit{nonorientable topological 4-genus}, the \textit{Turaev genus},  the \textit{smooth concordance genus}, and the \textit{topological concordance genus} just to name a few. The unifying theme among all these knot invariants is that for a given knot $K$ they are each the minimal genus or first Betti number among a family of surfaces associated to $K$ with property set $P$. The manner in which surfaces are associated to $K$ and the property set $P$ together define the invariant uniquely. We will call the family of knot invariants which fit this definition \textit{genus knot invariants}.

The subject of this paper is to continue in some sense the most fundamental line of work related to genus knot invariants. The most natural extension of the original genus knot invariant $g_3$ is $g_4$ and the most natural extension of $g_4$ is the $M$-genus $g_M$, where $M$ ranges over all smooth closed 4-manifolds.

\begin{definition}
Let $K$ be a knot and $M$ a smooth closed 4-manifold. The \textit{$M$-genus of $K$}, denoted $g_M(K)$, is the least genus among all orientable surfaces $S_K$ embedded smoothly and properly in $M\setminus B^4$ with $\partial S_K=K$. If $g_M(K)=0$, we say that $K$ is \textit{slice} in $M$. If $D_K$ is a 2-disc smoothly and properly embedded in $M\setminus B^4$ with $\partial D_K = K$, then we say that $D_K$ is a \textit{slice disc}.
\end{definition}

\begin{remark}\label{remark 4-genus an upper bound on M-genus}
Observe that $g_4=g_{S^4}$ since $S^4\setminus B^4\cong D^4$.
\end{remark}

The simplest smooth closed 4-manifolds are $S^4, S^2\times S^2$, and $\mathbb{C}P^2$. The $S^4$-genus has been studied extensively since it's introduction in 1966. As of this writing, the $S^4$-genus is currently known for all but 27 of the 2,977 prime knots up to 12-crossings \cite{Lewark2017} \cite{knotinfo}. The $S^2\times S^2$-genus was completely determined in 1969 when Suzuki showed that all knots are slice in $S^2\times S^2$ \cite{Suzuki1969}. Much less is known about the $\mathbb{C}P^2$-genus than either the $S^2\times S^2$- or the $S^4$-genus. Hence the impetus for this work.

There are two general approaches to computing values for a given genus knot invariant. The first approach is to compute the invariant for all prime knots up to a certain crossing number. One starts with a low crossing number, computes the invariant for all prime knots up to that crossing number, and then works upward to ever higher numbers of crossings. After computing the invariant for all prime knots up to 5-crossings, for instance, one would then compute the invariant for all prime knots up to 6-crossings, and so on. The second approach is to compute the invariant for certain infinite families of knots, with torus knots being the most common choice. We will focus on the first approach in this paper, though we acquire results fitting the second approach as a consequence (see Section \ref{section computation}). In particular, it is our goal to compute the $\mathbb{C}P^2$-genus for all prime knots through 8-crossings.

\subsection{What was known} 

The first notable work involving the $\mathbb{C}P^2$-genus  was presented by Yasuhara in 1991 and 1992 when he showed that for $x\geq 2$, the torus knot $T(2,2x+1)$ is not slice in $\mathbb{C}P^2$ \cite{Yasuhara1991} \cite{Yasuhara1992}. In 2009, Ait Nouh computed the explicit $\mathbb{C}P^2$-genus for a finite set of torus knots \cite{AitNouh2009}. Namely, he showed that
\begin{align*}
g_{\mathbb{C}P^2}(T(2,2x+1))		
&= x-1 \hspace{0.25cm}\text{when}\hspace{0.25cm} 1\leq x \leq 8 \\
g_{\mathbb{C}P^2}(T(-2,2x+1))		
&= 0 \hspace{0.25cm}\text{when}\hspace{0.25cm} 1\leq x\leq 4 \\
g_{\mathbb{C}P^2}(T(-2,2x+1))		
&= 1 \hspace{0.25cm}\text{when}\hspace{0.25cm} x=5.
\end{align*}

\begin{remark}
Observe that the $\mathbb{C}P^2$-genus of a knot $K$ may differ from it's mirror $mK$. The knot $T(2,5)$ has $\mathbb{C}P^2$-genus 1 while it's mirror $T(-2,5)$ has $\mathbb{C}P^2$-genus 0. We will see in Section \ref{section computation} that there are infinitely many pairs $K,mK$ with differing $\mathbb{C}P^2$-genus.
\end{remark}

\begin{remark}
It only makes our goal more difficult to accomplish that the $\mathbb{C}P^2$-genus may differ between a knot $K$ and it's mirror $mK$. At all crossing numbers, there are simply more knots to compute the $\mathbb{C}P^2$-genus for. However, all it not lost. As we will see with Theorem \ref{mainresult2} and Corollary \ref{corollary lawson}, knowledge about the $\mathbb{C}P^2$-genus of a knot sometimes translates into knowledge about the $\mathbb{C}P^2$-genus of it's mirror.
\end{remark}

There are 64 knots and distinct mirrors up to 8-crossings. The work by Ait Nouh gives explicit computations for 4 of these knots, namely
\[5_1,\hspace{.2cm} m5_1,\hspace{.2cm} 7_1,\hspace{.2cm} m7_1.\]

Yasuhara's Lemma 1.9 \cite{Yasuhara1992}, which he attributes to Weintraub, shows that any knot with unknotting number 1 is slice in $\mathbb{C}P^2$. For the set of knots we are considering, this includes
\[3_1,\hspace{.2cm} m3_1,\hspace{.2cm} 4_1,\hspace{.2cm} 5_2,\hspace{.2cm} m5_2,\hspace{.2cm} 6_1,\hspace{.2cm} m6_1,\hspace{.2cm} 6_2,\hspace{.2cm} m6_2,\hspace{.2cm} 6_3,\hspace{.2cm} 7_2,\hspace{.2cm} m7_2,\hspace{.2cm} 7_6,\hspace{.2cm} m7_6,\hspace{.2cm} 7_7,\hspace{.2cm} m7_7\]
\[8_1,\hspace{.2cm} m8_1,\hspace{.2cm} m8_7,\hspace{.2cm} 8_7,\hspace{.2cm} 8_9,\hspace{.2cm} 8_{11},\hspace{.2cm} m8_{11},\hspace{.2cm} 8_{13},\hspace{.2cm} m8_{13},\hspace{.2cm} 8_{14},\hspace{.2cm} m8_{14},\hspace{.2cm} 8_{17},\hspace{.2cm} 8_{20},\hspace{.2cm} m8_{20},\hspace{.2cm} 8_{21},\hspace{.2cm} m8_{21}.\]
We will give more details about how the unknotting number provides an upper bound on the $\mathbb{C}P^2$-genus in Section \ref{section upper bounds}.

Per Lemma \ref{lemma 4genus upper bound}, since $g_{\mathbb{C}P^2}(K)\leq g_4(K)$, it follows that any knot which is slice in $S^4$ is slice in $\mathbb{C}P^2$. Prime knots up to 8-crossings which have 4-genus 0 but haven't already been listed include
\[0_1,\hspace{.2cm} 8_8,\hspace{.2cm} m8_8.\]

\subsection{What we've shown}

A definition of \textit{coherent band surgery} may be found both in \cite{Moore2018} and in Section \ref{section upper bounds} of this paper. For those readers who consult \cite{Yasuhara1992}, Yasuhara defines \textit{m-fusion} and \textit{m-fission}. When $m=1$, $m$-fusion and $m$-fission are coherent band surgeries.

Let $S_K$ be a properly embedded surface in $\mathbb{C}P^2\setminus B^4$ with $\partial S_K = K\subset \partial (\mathbb{C}P^2\setminus B^4)$. We know that $S_K$ represents some class $[S_K]\in H_2(\mathbb{C}P^2\setminus B^4, \partial; \mathbb{Z})$. Let $\gamma:=[\mathbb{C}P^1]$ denote the generator of $H_2(\mathbb{C}P^2\setminus B^4,\partial;\mathbb{Z})\cong \mathbb{Z}$. Then $[S_K]=d\gamma$. We call $d$ the \textit{degree} of $S_K$. If $S_K$ is a 2-disc, then we say that $d$ is a \textit{slice degree} of $K$.

\begin{remark}
Given a knot $K$, one can form a new knot $rK$ by reversing the string-orientation of $K$ \cite{livnai}. If $K$ bounds a surface $S_K$ that is smoothly and properly embedded in $\mathbb{C}P^2\setminus B^4$ with degree $d$, then $rK$ bounds a surface $\overline{S_K}$ smoothly and properly embedded in $\mathbb{C}P^2\setminus B^4$  with degree $-d$. Since all our computations will involve squaring the degree of such a surface $S_K$, the string orientation will not be relevant in general. The only time we will pay any attention to the string-orientation of a knot is to ensure that the band surgeries we perform are coherent and as a rare technical detail.
\end{remark}

We've proven 3 main theorems during the course of this work.

\begin{theorem}\label{mainresult1}.
Let $K$ be a knot such that $mK$ is obtained from one of the links below via coherent band surgery. Then $K$ bounds a properly embedded disc $D_K$ in $\mathbb{C}P^2\setminus B^4$ with $[D_K]=d\gamma$. In particular, $K$ is slice in $\mathbb{C}P^2$.
\vspace{0.1cm}
\textup{
\begin{center}
\begin{tabular}{p{1cm} p{11cm}}
$|d|$ 	& Links \\
0		& $m$L2a1$\{1\}$,\; L5a1$\{0\}$,\; L7n2$\{0\}\{1\}$,\;
\\
1		& Unlink,\; L4a1$\{0\}$,\; $m$L7a4$\{0\}\{1\},$\; $m$L7n1$\{1\}$,\; $3_1\#m$L4a1$\{1\}$
\\
2		& L2a1$\{1\}$,\; $m$L5a1$\{0\}$\; $m$L7n2$\{0\}\{1\}$
\\
3		& L4a1$\{1\}$,\; L7a3$\{0\}\{1\}$,\; $m$L7n1$\{0\}$,\; $3_1\#m$L4a1$\{0\}$
\\
\end{tabular}
\end{center}
}
\end{theorem}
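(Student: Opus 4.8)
The plan is to build the slice disc for $K$ by capping off an explicit collection of discs bounded by the listed link with the cobordism coming from the coherent band surgery. I would organize the argument around three ingredients: a cobordism-gluing principle for band surgery, an explicit construction of genus-$0$ surfaces bounded by the base links, and a mirror/orientation step that converts a disc for $mK$ into one for $K$.

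First I would record the standard reading of coherent band surgery as an oriented saddle cobordism (cf.\ Section \ref{section upper bounds} and \cite{Moore2018}). If $mK$ is obtained from a two-component link $L$ by a single coherent band surgery (a $1$-fusion in the sense of \cite{Yasuhara1992}), then there is an oriented, connected, genus-$0$ cobordism $C$ (a pair of pants) in $S^3\times[0,1]$ from $L$ to $mK$ with $\chi(C)=-1$. Coherence is exactly what makes $C$ orientable, so that capping with $C$ keeps us among orientable surfaces. Pushed into a collar of the boundary, $C$ carries the trivial relative homology class, so gluing it to any surface $F$ with $\partial F = L$ preserves the degree while lowering the Euler characteristic by $1$. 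Thus if $L$ bounds two disjoint discs ($\chi=2$) of total degree $d$, the capped surface $F\cup_L C$ is a single disc of degree $d$ bounded by $mK$.

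Second, and this is the heart of the matter, I would show that each link $L$ in the table bounds a disjoint union of two discs of the stated total degree $d$ inside $\overline{\mathbb{C}P^2}\setminus B^4$, built from pieces of complex curves. Two lines meet in a single point, so after deleting a ball about that point their complement is two disjoint discs bounding a Hopf link with total degree $\pm 2$; reversing the orientation of one disc changes the linking number and the degree simultaneously, producing the degree-$0$ entry $m$L2a1$\{1\}$. A line together with a conic (meeting in two points) gives the torus link L4a1 $=T(2,4)$ with total degree $1+2=3$, and reversing the conic's orientation gives the degree-$1$ oriented version L4a1$\{0\}$; the two-component unlink bounds a degree-$1$ disc (a line piece) together with a flat degree-$0$ disc. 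The $7$-crossing entries are handled by the analogous configurations of lines and conics, and the connected-sum entries $3_1\#m$L4a1 by boundary-connect-summing the known degree-$\pm1$ disc of the trefoil (slice in $\mathbb{C}P^2$ since $u(3_1)=1$) into one component. Combining with the first ingredient gives a degree-$d$ disc bounded by $mK$ in $\overline{\mathbb{C}P^2}\setminus B^4$.

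Finally I would transfer this to $K$. The identity on points is an orientation-reversing diffeomorphism $\overline{\mathbb{C}P^2}\setminus B^4 \cong \mathbb{C}P^2\setminus B^4$ whose restriction to the boundary is an orientation-reversing diffeomorphism of $S^3$, hence identifies the boundary knot $mK$ with its mirror $K$ and carries a properly embedded degree-$d$ disc to a properly embedded disc of the same degree up to sign. Since only $|d|$ is recorded (the sign being governed by string orientation, as in the Remark preceding the theorem), this yields the required disc $D_K$ with $[D_K]=d\gamma$, and in particular $g_{\mathbb{C}P^2}(K)=0$. I expect the main obstacle to be the second step: exhibiting each pair of discs explicitly and, crucially, matching the orientation conventions of Linkinfo (the $\{0\}/\{1\}$ labels and the $m$-prefixes) so that the computed degree and linking data agree with the table entry by entry, all while keeping every band coherent so that the surfaces stay orientable. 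I note also that passing through $mK$ and $\overline{\mathbb{C}P^2}$ is essential and does not collapse the distinction between a knot and its mirror, consistent with the examples (e.g.\ $T(2,5)$ versus $T(-2,5)$) where the $\mathbb{C}P^2$-genus genuinely differs.
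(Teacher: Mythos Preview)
Your outline contains a genuine gap in the mirror/orientation bookkeeping, and it is exactly the step that the paper's proof handles with a different (and essential) device.

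Here is the issue. Your step 4 is correct but tautological: ``$J$ bounds a disc in $\overline{\mathbb{C}P^2}\setminus B^4$'' and ``$mJ$ bounds a disc in $\mathbb{C}P^2\setminus B^4$'' are literally the same statement, read through the identity map. So step 4 does no work; everything hinges on step 2. There you need each link $L$ from the table to bound two disjoint discs of total degree $d$ in $\overline{\mathbb{C}P^2}\setminus B^4$. But the complex-curve (equivalently, core-disc) constructions you describe naturally take place in $\mathbb{C}P^2$: parallel copies of the core of the $+1$-framed $2$-handle bound the \emph{positive} torus link $T(n,n)$, and the band surgeries of Appendix~\ref{appendix surgeries for MR1} turn this into the links $L$ listed in the table. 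Transporting those same surfaces into $\overline{\mathbb{C}P^2}\setminus B^4$ via your identity map mirrors the boundary link, so what you actually obtain is that $mL$ (not $L$) bounds the degree-$d$ discs there. Concretely, in $\overline{\mathbb{C}P^2}\setminus B^4$ two ``line'' discs give the \emph{negative} Hopf link at degree $2$, whereas the table assigns $|d|=2$ to the positive Hopf link L2a1$\{1\}$ and $|d|=0$ to $m$L2a1$\{1\}$; your scheme swaps these. Tracing the rest through, your steps 2--4 end up proving that $mK$ (not $K$) is slice in $\mathbb{C}P^2$, which is a genuinely different statement---as the $T(2,5)$ versus $T(-2,5)$ example you yourself cite shows.

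What is missing is precisely the mechanism the paper uses: the construction places the degree-$d$ disc for $rmK$ in the $2$-handle plus collar region of $\mathbb{C}P^2\setminus B^4$, leaving the inner $D^4\subset h^0$ untouched. One then runs a trivial concordance of $K$ alongside, bands the two at an interior level to produce $K\# rmK$, and caps that off with its standard slice disc in the remaining $D^4$. This ``$K\# rmK$ trick'' is what converts a disc for $rmK$ into a disc for $K$ \emph{within the same} $\mathbb{C}P^2\setminus B^4$; no amount of ambient orientation reversal can substitute for it, because orientation reversal changes the ambient manifold and the knot simultaneously. Your first ingredient (band cobordisms) and much of your second (building the base discs from core discs/curve pieces) are sound and match the paper's approach; the third ingredient needs to be replaced by this connect-sum-with-the-concordance-inverse argument.
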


\begin{theorem}\label{mainresult2}
Let $K$ be an alternating knot with $|\sigma(K)|=4$. Then either $K$ or $mK$ fails to be slice in $\mathbb{C}P^2$.
\end{theorem}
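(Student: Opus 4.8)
The plan is to use the signature as a lower bound obstruction on slice discs in $\mathbb{C}P^2$, combined with a parity/congruence constraint relating the signature to the possible slice degrees. The key point is that if a knot $K$ bounds a slice disc $D_K$ of degree $d$ in $\mathbb{C}P^2 \setminus B^4$, then there is a classical inequality (the analog of the Murasugi--Tristram or $G$-signature bound, adapted to $\mathbb{C}P^2$) that constrains $\sigma(K)$ in terms of $d^2$. Specifically, I expect a bound of the shape $|\sigma(K) + d^2| \leq \text{(something involving the Euler characteristic, which is $1$ for a disc)}$, or more precisely that sliceness in $\mathbb{C}P^2$ with degree $d$ forces $\sigma(K)$ to be close to $-d^2$ (or $d^2$, depending on orientation conventions). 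I would first locate or re-derive the precise signature obstruction that the paper uses in its lower-bound section, since that is the engine of the argument.

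**First I would** record the degree constraint coming from homology: since the disc $D_K$ is properly embedded and caps off $K$, the degree $d$ is determined mod $2$ by a characteristic-class condition. In $\mathbb{C}P^2$ the intersection form is odd, and the self-intersection of the closed-up surface is $d^2$. I would then invoke the signature bound specialized to a disc (genus $0$, so first Betti number $0$): the obstruction should read roughly $-d^2 \leq \sigma(K) \leq -d^2 + 2$ or a symmetric variant. **Next I would** bring in the hypothesis that $K$ is alternating. For alternating knots, the signature is an even integer with a definite sign behavior, and crucially $\sigma(mK) = -\sigma(K)$. So if $|\sigma(K)| = 4$, then one of $K, mK$ has signature $+4$ and the other has signature $-4$. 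The strategy is to show that the value $+4$ (say) is incompatible with the degree constraint $\sigma = -d^2 + \{\text{small error}\}$, because $-d^2$ is always $\leq 0$, so a large positive signature cannot be matched by any integer $d$ once the allowed error window is smaller than the gap.

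**The hard part will be** pinning down the exact width of the error window in the signature inequality and confirming that $|\sigma| = 4$ sits strictly outside the achievable range for one of the two sign choices. I need the inequality to be tight enough that no integer $d$ satisfies it: for $\sigma(K) = +4$ I would need to rule out every $d$, using that $d^2 \in \{0,1,4,9,\dots\}$ and that the error term (from the disc having $b_1 = 0$) is at most $1$ or $2$. The argument hinges on there being no perfect square $d^2$ with $-d^2$ within the allowed distance of $+4$, which holds precisely because $-d^2 \leq 0 < 4$ and the window is narrow. I would also need to verify that the alternating hypothesis guarantees $|\sigma| = 4$ is realized with a sign that the obstruction can detect (i.e.\ that the signature of an alternating knot is indeed even and that the mirror flips its sign), so that exactly one of $K, mK$ is obstructed while the other may still be slice. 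I expect the cleanest writeup to isolate the signature–degree inequality as a preliminary lemma, then reduce to a short arithmetic check that $4$ and $-4$ cannot both lie in the realizable signature range.
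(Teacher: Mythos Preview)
Your approach has a genuine gap: you are trying to obstruct one of the two knots \emph{individually} from being slice in $\mathbb{C}P^2$ using a single signature--degree inequality, and you guess that the $\sigma=+4$ side will be the obstructed one. But this is false. The knot $m5_1$ has $\sigma=+4$ and \emph{is} slice in $\mathbb{C}P^2$ (with $|d|=3$); for the pair $(5_1,m5_1)$ it is the $\sigma=-4$ knot that fails. Conversely, for $(7_5,m7_5)$ the $\sigma=+4$ knot $m7_5$ is the one that fails. So no argument based solely on the sign of $\sigma$ can single out which of $K,mK$ is obstructed, and in particular no inequality of the shape $\sigma(K)\approx -d^2$ can rule out all $d$ for the $\sigma=+4$ side. (Also note: the Gilmer--Viro bound in the paper only applies when $d$ is \emph{even}, and the sign in that inequality is the opposite of what you wrote; there is no comparable classical signature bound for odd $d$.)

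The paper's proof is structurally different: it assumes \emph{both} $K$ and $mK$ are slice, with degrees $d,d'$, and derives a contradiction by combining three separate obstructions. First, Gilmer--Viro (Corollary~\ref{corollary gilmer viro}) rules out all even degrees for both knots when $|\sigma|=4$, so $d$ and $d'$ are odd. Second, the Ozsv\'ath--Szab\'o $\tau$-invariant (Corollary~\ref{corollary ozvath szabo}) --- and this is where the alternating hypothesis actually enters, via $\tau=-\sigma/2$, not merely via parity of $\sigma$ --- forces $|d|\ge 3$ for the knot with $\sigma=+4$. Third, and this is the step you are missing entirely, Lawson's theorem on characteristic spheres in $2\mathbb{C}P^2\#\overline{\mathbb{C}P^2}$ (Corollary~\ref{corollary lawson}) says that if $K$ has an odd slice degree with $|d|\ge 3$, then $mK$ cannot have any odd slice degree at all. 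This contradicts the first step. The interaction between the two slice discs via Lawson is the essential idea; without it you cannot close the argument.
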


\begin{theorem}\label{mainresult3}
Let $K$ be an alternating knot with $\sigma(K)=4$, $g_4(K)\leq 2$, and Arf$(K)=0$. Then $K$ is not slice in $\mathbb{C}P^2$.
\end{theorem}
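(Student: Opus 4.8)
The plan is to obstruct the existence of a slice disc in $\mathbb{C}P^2$ by controlling which homological degrees are admissible, and then ruling out every admissible degree under the stated hypotheses. Suppose toward a contradiction that $K$ bounds a properly embedded disc $D_K$ in $\mathbb{C}P^2\setminus B^4$ with $[D_K]=d\gamma$. The self-intersection of the class $d\gamma$ in $\mathbb{C}P^2$ is $d^2$, and the crucial consequence of a theorem of the Tristram--Levine/Murasugi--Yasuhara type (which I would invoke from the earlier obstruction machinery) is a \emph{signature constraint}: when a knot $K$ bounds a degree-$d$ slice disc in $\mathbb{C}P^2$, the signature $\sigma(K)$ must be balanced against the self-intersection $d^2$, typically in the form of an inequality $|\sigma(K) - \varepsilon\, d^2| \leq 2 g_4^{\text{(aux)}}$ or an exact congruence/parity relation between $\sigma(K)$ and $d^2 \bmod$ something. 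The first step is therefore to write down precisely the degree-$d$ obstruction that the paper has already set up and extract from it the finite list of candidate degrees $d$ compatible with $\sigma(K)=4$.

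Second, I would use the parity and size of $\sigma(K)=4$ to cut the candidate list down sharply. Since $\sigma(K)$ is divisible by $4$ for this class of knots and the obstruction pins $\sigma(K)$ against $d^2$, the relation forces $d$ into a very small set (morally $d \in \{0,\pm 1,\pm 2\}$, since $d^2$ must be of comparable magnitude to $\sigma(K)=4$). The hypothesis $g_4(K)\leq 2$ then enters to kill the small-$|d|$ cases: a degree-$d$ disc in $\mathbb{C}P^2$ together with a bound on the ordinary $4$-genus lets one reconstruct a genus-bounded surface in $S^4$ or in a blown-down/blown-up manifold, so that an $S^4$-type inequality (Murasugi, or the $g_4 \geq |\sigma|/2$ bound) applies. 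In particular, $d=0$ would make $K$ ordinarily slice, contradicting $\sigma(K)=4\neq 0$, and $g_4(K)\leq 2$ handles the near-slice surgeries uniformly.

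Third, the Arf invariant hypothesis $\mathrm{Arf}(K)=0$ is the finishing tool for the remaining ambiguous degree (the case where the signature inequality alone leaves one admissible $d$ surviving). Here I would use that a degree-$d$ slice disc forces a mod-$2$ condition: reducing mod $2$, a slice disc in $\mathbb{C}P^2$ induces, via the standard characteristic/spin-cobordism argument, a congruence relating $\mathrm{Arf}(K)$ to $d(d-1)/2$ or to $d^2 \bmod 16$ through the Rokhlin/Guillou--Marin signature-of-characteristic-surface framework. Since $\mathrm{Arf}(K)=0$ is prescribed, this congruence either is violated by the surviving $d$ or forces $d$ to a parity incompatible with the signature inequality from the first two steps, completing the contradiction.

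The hard part will be the third step: getting a clean, correctly normalized mod-$2$ (or mod-$16$) obstruction that links $\mathrm{Arf}(K)$ to the degree $d$ and to the self-intersection $d^2$. The Guillou--Marin congruence for a characteristic surface carries a signature-defect term and a framing/Arf term whose conventions are notoriously sign- and normalization-sensitive, and one must be careful that $D_K$ represents (or does not represent) a characteristic class in $\mathbb{C}P^2\setminus B^4$ relative to $\partial$. I expect the bulk of the genuine work to be verifying that the specific numerical window left open after the signature inequality ($\sigma=4$, $g_4\leq 2$) is exactly the window that the Arf congruence closes, with no off-by-one in the exponent of $d$ or in the modulus.
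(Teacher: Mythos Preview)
Your broad outline---reduce to a finite list of candidate degrees and then eliminate each one---is exactly the paper's strategy, and your Arf step is essentially correct (the paper uses Robertello's congruence $\mathrm{Arf}(K)\equiv (d^2-1)/8 \pmod 2$ for odd $d$, which with $\mathrm{Arf}(K)=0$ rules out $|d|=3$). But you have the roles of $g_4$ and $\sigma$ reversed, and one step contains a genuine error.

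In the paper, the hypothesis $g_4(K)\leq 2$ gives the \emph{upper} bound on $|d|$: cap the degree-$d$ disc in $\mathbb{C}P^2\setminus B^4$ with a minimal-genus Seifert-type surface in the filling $D^4$ to get a closed surface of genus $g_4(K)$ in $\mathbb{C}P^2$, then Kronheimer--Mrowka (the Thom conjecture) gives $2g_4(K)\geq (|d|-1)(|d|-2)$, so $|d|\in\{0,1,2,3\}$. The \emph{lower} bound, eliminating $|d|\in\{0,1,2\}$, comes from the signature via the Ozsv\'ath--Szab\'o $\tau$-inequality in the negative-definite manifold $\overline{\mathbb{C}P^2}\setminus B^4$: for alternating $K$ one has $\tau(K)=-\sigma(K)/2$, and the inequality $2\tau(rmK)+|d|-d^2\leq 0$ becomes $\sigma(K)\leq |d|(|d|-1)$, which with $\sigma(K)=4$ forces $|d|\geq 3$. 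Your proposal instead has the signature producing the bounded list and $g_4$ killing small $|d|$; neither direction works that way here.

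The specific error: you assert that a degree-$0$ slice disc in $\mathbb{C}P^2\setminus B^4$ would make $K$ ``ordinarily slice,'' contradicting $\sigma(K)=4$. This is false. A nullhomologous disc in $\mathbb{C}P^2\setminus B^4$ need not be isotopic into a $D^4$; there is no blow-down argument that turns such a disc into a slice disc in $S^4$. The paper instead rules out $d=0$ (along with $|d|=1,2$) using the $\tau$-inequality, which genuinely requires the alternating hypothesis. Without that input your proof of the $d=0$ case has a gap.
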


Using coherent band surgery, the following prime knots and mirrors up to 8-crossings were found to satisfy the hypothesis of Theorem \ref{mainresult1}
\[m7_3,\hspace{.2cm} 7_4,\hspace{.2cm} 8_3,\hspace{.2cm} 8_4,\hspace{.2cm} m8_5,\hspace{.2cm} 8_6,\hspace{.2cm} m8_6,\hspace{.2cm} 8_{12},\hspace{.2cm} m8_{19}.\]
It follows from  Theorem \ref{mainresult1} that all the knots in the list directly above are slice in $\mathbb{C}P^2$. We found several other knots of 9- and 10-crossings to which Theorem \ref{mainresult1} applies, but since these are outside the scope of our main goal, we move their mention to Section \ref{section computation}. The coherent band surgeries for all knots to which we've found Theorem \ref{mainresult1} to apply can be found in Appendix \ref{appendix surgeries knots to links}.

Theorem \ref{mainresult2} is largely a companion to Theorem \ref{mainresult1}. For alternating knots $K$ with $\sigma(K)=\pm 4$, if we know that one of $K, mK$ is slice in $\mathbb{C}P^2$, then by Theorem \ref{mainresult2}, the other cannot be slice in $\mathbb{C}P^2$. Prime knots and mirrors up to 8-crossings which satisfy the hypothesis of Theorem \ref{mainresult2} and have a mirror that is slice in $\mathbb{C}P^2$ include
\[m7_3,\hspace{.2cm} 8_5.\]
As we will see in Section \ref{section upper bounds}, since the knots above both have unknotting number 2 their $\mathbb{C}P^2$-genus is 1. Using Theorem \ref{mainresult1} and Theorem \ref{mainresult2}, we were able to show that for an infinite family of knots $\{K_n\}$, $K_n$ and $mK_n$ have differing $\mathbb{C}P^2$-genus for each $n\in\mathbb{N}$. This is explained in more detail in Section \ref{section computation}.

There are 220 prime knot up to 12-crossings that satisfy the hypothesis of Theorem \ref{mainresult3}. These can be found easily by using the search function on Knotinfo \cite{knotinfo}. However, we are concerning ourselves primarily with those up to 8-crossings. Such knots include
\[m7_5,\hspace{.2cm} m8_2,\hspace{.2cm} m8_{15}\]
Due to their unknotting number being 2, we can explicitly compute the $\mathbb{C}P^2$-genus of these knots to be 1. More details about how the unknotting number gives an upper bound on the $\mathbb{C}P^2$-genus can be found in Section \ref{section upper bounds}.

Using the definition of knot concordance, we were able to show that 
\[8_{10},\hspace{.2cm} m8_{10}\]
are both slice in $\mathbb{C}P^2$. This is explained in more detail in Section \ref{section upper bounds}. As with the results above, we were able to use knot concordance to compute the $\mathbb{C}P^2$-genus for more than just these knots, but for the sake of focus we place those results in Section \ref{section computation}.

A series of tables is provided below. Each table contains all the prime knots and distinct mirrors up to 8-crossings of a particular signature $\sigma$ and Arf invariant. For example, the first table lists all the prime knots and distinct mirrors up to 8-crossings with signature and Arf invariant 0. Each prime knot and distinct mirror up to 8-crossings is contained in one of the tables. When the $\mathbb{C}P^2$-genus is known explicitly it is given. When it is not known completely, the set of possibilites is given. In all cases where the $\mathbb{C}P^2$-genus is not known, there are exactly two possibilities. For each knot, the set of possible slice degrees as allowed by Corollaries \ref{corollary ozvath szabo}, \ref{corollary kronheimer mrowka}, \ref{corollary gilmer viro}, \ref{corollary lawson}, and \ref{corollary robertello} are listed. If the author has explicitly constructed a slice disc with a particular slice degree, then it is listed as a realized slice degree.

\begin{center}

\begin{tabular}{|p{1cm} |p{0.75cm} |p{0.5cm} |p{0.5cm} |p{0.75cm} |p{0.5cm} |p{0.5cm} |p{3cm} | p{3cm} | }
\hline $K$ & $g_{\mathbb{C}P^2}$ & alt? & $\sigma$ & Arf & $g_4$ & $u$ & \pbox{3cm}{\footnotesize Possible Slice Degrees}  &
\pbox{3cm}{\footnotesize Realized Slice Degrees}  \\
\hline
$0_1$ & $0$ & Y & $0$ & $0$ & $0$ & $0$ & 2, 0, 1 & 2, 0, 1 \\
\hline
$6_1$ & $0$ & Y & $0$ & $0$ & $0$ & $1$ & 2, 0, 1 & 2 \\
\hline
$m6_1$ & $0$ & Y & $0$ & $0$ & $0$ & $1$ & 2, 0, 1 & 0, 1 \\
\hline
$8_3$ & $0$ & Y & $0$ & $0$ & $1$ & $2$ & 2, 0, 1 & 1 \\
\hline
$8_8$ & $0$ & Y & $0$ & $0$ & $0$ & $2$ & 2, 0, 1 & 1 \\
\hline
$m8_8$ & $0$ & Y & $0$ & $0$ & $0$ & $2$ & 2, 0, 1 & 2 \\
\hline
$8_9$ & $0$ & Y & $0$ & $0$ & $0$ & $1$ & 2, 0, 1 & 1 \\
\hline
$8_{20}$ & $0$ & N & $0$ & $0$ & $0$ & $1$ & 2, 0, 1 & 1 \\
\hline
$m8_{20}$ & $0$ & N & $0$ & $0$ & $0$ & $1$ & 2, 0, 1 & 1 \\
\hline
\end{tabular}
\vspace{0.5cm}

\begin{tabular}{|p{1cm} |p{0.75cm} |p{0.5cm} |p{0.5cm} |p{0.75cm} |p{0.5cm} |p{0.5cm} |p{3cm} | p{3cm} | }
\hline $K$ & $g_{\mathbb{C}P^2}$ & alt? & $\sigma$ & Arf & $g_4$ & $u$ & \pbox{3cm}{\footnotesize Possible Slice Degrees}  &
\pbox{3cm}{\footnotesize Realized Slice Degrees}  \\
\hline
$4_1$ & $0$ & Y & $0$ & $1$ & $1$ & $1$ & 2, 0 & 2, 0 \\
\hline
$6_3$ & $0$ & Y & $0$ & $1$ & $1$ & $1$ & 2, 0 & 2, 0 \\
\hline
$7_7$ & $0$ & Y & $0$ & $1$ & $1$ & $1$ & 2, 0, 3 & 2 \\
\hline
$m7_7$ & $0$ & Y & $0$ & $1$ & $1$ & $1$ & 2, 0, 3 & 0 \\
\hline
$8_1$ & $0$ & Y & $0$ & $1$ & $1$ & $1$ & 2, 0, 3 & \\
\hline
$m8_1$ & $0$ & Y & $0$ & $1$ & $1$ & $1$ & 2, 0, 3 & 0 \\
\hline
$8_{12}$ & $0$ & Y & $0$ & $1$ & $1$ & $2$ & 2, 0 & 2, 0 \\
\hline
$8_{13}$ & $0$ & Y & $0$ & $1$ & $1$ & $1$ & 2, 0, 3 &  \\
\hline
$m8_{13}$ & $0$ & Y & $0$ & $1$ & $1$ & $1$ & 2, 0, 3 & \\
\hline
$8_{17}$ & $0$ & Y & $0$ & $1$ & $1$ & $1$ & 2, 0, 3 & 2 \\
\hline
$8_{18}$ & $\{0,1\}$ & Y & $0$ & $1$ & $1$ & $2$ & 2, 0 & \\
\hline
\end{tabular}
\vspace{0.5cm}

\begin{tabular}{|p{1cm} |p{0.75cm} |p{0.5cm} |p{0.5cm} |p{0.75cm} |p{0.5cm} |p{0.5cm} |p{3cm} | p{3cm} | }
\hline $K$ & $g_{\mathbb{C}P^2}$ & alt? & $\sigma$ & Arf & $g_4$ & $u$ & \pbox{3cm}{\footnotesize Possible Slice Degrees}  &
\pbox{3cm}{\footnotesize Realized Slice Degrees}  \\
\hline
$5_2$ & $0$ & Y & $-2$ & $0$ & $1$ & $1$ & 0, 1 & 0, 1 \\
\hline
$7_4$ & $0$ & Y & $-2$ & $0$ & $1$ & $2$ & 0, 1 & 0, 1 \\
\hline
$8_6$ & $0$ & Y & $-2$ & $0$ & $1$ & $2$ & 0, 1 & 0 \\
\hline
$m8_7$ & $0$ & Y & $-2$ & $0$ & $1$ & $1$ & 0, 1 & 0 \\
\hline
$8_{14}$ & $0$  & Y & $-2$ & $0$ & $1$ & $1$ & 0, 1 & 0 \\
\hline
$8_{21}$ & $0$ & N & $-2$ & $0$ & $1$ & $1$ & 0, 1 & 0 \\
\hline
\end{tabular}
\vspace{0.5cm}

\begin{tabular}{|p{1cm} |p{0.75cm} |p{0.5cm} |p{0.5cm} |p{0.75cm} |p{0.5cm} |p{0.5cm} |p{3cm} | p{3cm} | }
\hline $K$ & $g_{\mathbb{C}P^2}$ & alt? & $\sigma$ & Arf & $g_4$ & $u$ & \pbox{3cm}{\footnotesize Possible Slice Degrees}  &
\pbox{3cm}{\footnotesize Realized Slice Degrees}  \\
\hline
$m5_2$ & $0$ & Y & $2$ & $0$ & $1$ & $1$ & 2 & 2 \\
\hline
$m7_4$ & $\{0,1\}$ & Y & $2$ & $0$ & $1$ & $2$ & 2 &  \\
\hline
$m8_6$ & $0$ & Y & $2$ & $0$ & $1$ & $2$ & 2 & 2 \\
\hline
$8_7$ & $0$ & Y & $2$ & $0$ & $1$ & $1$ & 2 & 2 \\
\hline
$m8_{14}$ & $0$  & Y & $2$ & $0$ & $1$ & $1$ & 2 & 2 \\
\hline
$m8_{21}$ & $0$ & N & $2$ & $0$ & $1$ & $1$ & 2, 0, 1 & 2 \\
\hline
\end{tabular}
\vspace{0.5cm}

\begin{tabular}{|p{1cm} |p{0.75cm} |p{0.5cm} |p{0.5cm} |p{0.75cm} |p{0.5cm} |p{0.5cm} |p{3cm} | p{3cm} | }
\hline $K$ & $g_{\mathbb{C}P^2}$ & alt? & $\sigma$ & Arf & $g_4$ & $u$ & \pbox{3cm}{\footnotesize Possible Slice Degrees}  &
\pbox{3cm}{\footnotesize Realized Slice Degrees}  \\
\hline
$3_1$ & $0$ & Y & $-2$ & $1$ & $1$ & $1$ & 0 & 0 \\
\hline
$6_2$ & $0$ & Y & $-2$ & $1$ & $1$ & $1$ & 0, 3 & 0 \\
\hline
$7_2$ & $0$ & Y & $-2$ & $1$ & $1$ & $1$ & 0 & 0 \\
\hline
$7_6$ & $0$ & Y & $-2$ & $1$ & $1$ & $1$ & 0, 3 & 0 \\
\hline
$m8_4$ & $\{0,1\}$  & Y & $-2$ & $1$ & $1$ & $2$ & 0 & \\
\hline
$m8_{10}$ & $0$ & Y & $-2$ & $1$ & $1$ & $2$ & 0 & 0 \\
\hline
$8_{11}$ & $0$ & Y & $-2$ & $1$ & $1$ & $1$ & 0, 3 & 0 \\
\hline
$8_{16}$ & $\{0,1\}$ & Y & $-2$ & $1$ & $1$ & $2$ & 0, 3 & \\
\hline
\end{tabular}
\vspace{0.5cm}

\begin{tabular}{|p{1cm} |p{0.75cm} |p{0.5cm} |p{0.5cm} |p{0.75cm} |p{0.5cm} |p{0.5cm} |p{3cm} | p{3cm} | }
\hline $K$ & $g_{\mathbb{C}P^2}$ & alt? & $\sigma$ & Arf & $g_4$ & $u$ & \pbox{3cm}{\footnotesize Possible Slice Degrees}  &
\pbox{3cm}{\footnotesize Realized Slice Degrees}  \\
\hline
$m3_1$ & $0$ & Y & $2$ & $1$ & $1$ & $1$ & 2, 3 & 2, 3 \\
\hline
$m6_2$ & $0$ & Y & $2$ & $1$ & $1$ & $1$ & 2, 3 & 2 \\
\hline
$m7_2$ & $0$ & Y & $2$ & $1$ & $1$ & $1$ & 2, 3 & 2, 3 \\
\hline
$m7_6$ & $0$ & Y & $2$ & $1$ & $1$ & $1$ & 2, 3 & 2 \\
\hline
$8_4$ & $0$ & Y & $2$ & $1$ & $1$ & $2$ & 2, 3 & 3 \\
\hline
$8_{10}$ & $0$ & Y & $2$ & $1$ & $1$ & $2$ & 2, 3 & 2, 3 \\
\hline
$m8_{11}$ & $0$ & Y & $2$ & $1$ & $1$ & $1$ & 2, 3 & 2 \\
\hline
$m8_{16}$ & $\{0,1\}$ & Y & $2$ & $1$ & $1$ & $2$ & 2, 3 & \\
\hline
\end{tabular}
\vspace{0.5cm}

\begin{tabular}{|p{1cm} |p{0.75cm} |p{0.5cm} |p{0.5cm} |p{0.75cm} |p{0.5cm} |p{0.5cm} |p{3cm} | p{3cm} | }
\hline $K$ & $g_{\mathbb{C}P^2}$ & alt? & $\sigma$ & Arf & $g_4$ & $u$ & \pbox{3cm}{\footnotesize Possible Slice Degrees}  &
\pbox{3cm}{\footnotesize Realized Slice Degrees}  \\
\hline
$7_5$ & $\{0,1\}$ & Y & $-4$ & $0$ & $2$ & $2$ & 1 &  \\
\hline
$8_2$ & $\{0,1\}$ & Y & $-4$ & $0$ & $2$ & $2$ & 1 &  \\
\hline
$8_{15}$ & $\{0,1\}$ & Y & $-4$ & $0$ & $2$ & $2$ & 1 &  \\
\hline
\end{tabular}
\vspace{0.5cm}

\begin{tabular}{|p{1cm} |p{0.75cm} |p{0.5cm} |p{0.5cm} |p{0.75cm} |p{0.5cm} |p{0.5cm} |p{3cm} | p{3cm} | }
\hline $K$ & $g_{\mathbb{C}P^2}$ & alt? & $\sigma$ & Arf & $g_4$ & $u$ & \pbox{3cm}{\footnotesize Possible Slice Degrees}  &
\pbox{3cm}{\footnotesize Realized Slice Degrees}  \\
\hline
$m7_5$ & $1$ & Y & $4$ & $0$ & $2$ & $2$ & - & - \\
\hline
$m8_2$ & $1$ & Y & $4$ & $0$ & $2$ & $2$ & - & - \\
\hline
$m8_{15}$ & $1$ & Y & $4$ & $0$ & $2$ & $2$ & - & - \\
\hline
\end{tabular}
\vspace{0.5cm}

\begin{tabular}{|p{1cm} |p{0.75cm} |p{0.5cm} |p{0.5cm} |p{0.75cm} |p{0.5cm} |p{0.5cm} |p{3cm} | p{3cm} | }
\hline $K$ & $g_{\mathbb{C}P^2}$ & alt? & $\sigma$ & Arf & $g_4$ & $u$ & \pbox{3cm}{\footnotesize Possible Slice Degrees}  &
\pbox{3cm}{\footnotesize Realized Slice Degrees}  \\
\hline
$5_1$ & $1$ & Y & $-4$ & $1$ & $2$ & $2$ & - & - \\
\hline
$7_3$ & $1$ & Y & $-4$ & $1$ & $2$ & $2$ & - & - \\
\hline
$8_5$ & $1$ & Y & $-4$ & $1$ & $2$ & $2$ & - & - \\
\hline
\end{tabular}
\vspace{0.5cm}

\begin{tabular}{|p{1cm} |p{0.75cm} |p{0.5cm} |p{0.5cm} |p{0.75cm} |p{0.5cm} |p{0.5cm} |p{3cm} | p{3cm} | }
\hline $K$ & $g_{\mathbb{C}P^2}$ & alt? & $\sigma$ & Arf & $g_4$ & $u$ & \pbox{3cm}{\footnotesize Possible Slice Degrees}  &
\pbox{3cm}{\footnotesize Realized Slice Degrees}  \\
\hline
$m5_1$ & $0$ & Y & $4$ & $1$ & $2$ & $2$ & 3 & 3 \\
\hline
$m7_3$ & $0$ & Y & $4$ & $1$ & $2$ & $2$ & 3 & 3 \\
\hline
$m8_5$ & $0$ & Y & $4$ & $1$ & $2$ & $2$ & 3 & 3 \\
\hline
\end{tabular}
\vspace{0.5cm}

\begin{tabular}{|p{1cm} |p{0.75cm} |p{0.5cm} |p{0.5cm} |p{0.75cm} |p{0.5cm} |p{0.5cm} |p{3cm} | p{3cm} | }
\hline $K$ & $g_{\mathbb{C}P^2}$ & alt? & $\sigma$ & Arf & $g_4$ & $u$ & \pbox{3cm}{\footnotesize Possible Slice Degrees}  &
\pbox{3cm}{\footnotesize Realized Slice Degrees}  \\
\hline
$7_1$ & $2$ & Y & $-6$ & $0$ & $3$ & $3$ & - & - \\
\hline
\end{tabular}
\vspace{0.5cm}

\begin{tabular}{|p{1cm} |p{0.75cm} |p{0.5cm} |p{0.5cm} |p{0.75cm} |p{0.5cm} |p{0.5cm} |p{3cm} | p{3cm} | }
\hline $K$ & $g_{\mathbb{C}P^2}$ & alt? & $\sigma$ & Arf & $g_4$ & $u$ & \pbox{3cm}{\footnotesize Possible Slice Degrees}  &
\pbox{3cm}{\footnotesize Realized Slice Degrees}  \\
\hline
$m7_1$ & $0$ & Y & $6$ & $0$ & $3$ & $3$ & 4, 1 & 4 \\
\hline
\end{tabular}
\vspace{0.5cm}

\begin{tabular}{|p{1cm} |p{0.75cm} |p{0.5cm} |p{0.5cm} |p{0.75cm} |p{0.5cm} |p{0.5cm} |p{3cm} | p{3cm} | }
\hline $K$ & $g_{\mathbb{C}P^2}$ & alt? & $\sigma$ & Arf & $g_4$ & $u$ & \pbox{3cm}{\footnotesize Possible Slice Degrees}  &
\pbox{3cm}{\footnotesize Realized Slice Degrees}  \\
\hline
$8_{19}$ & $\{1,2\}$ & N & $-6$ & $1$ & $3$ & $3$ & - & - \\
\hline
\end{tabular}
\vspace{0.5cm}

\begin{tabular}{|p{1cm} |p{0.75cm} |p{0.5cm} |p{0.5cm} |p{0.75cm} |p{0.5cm} |p{0.5cm} |p{3cm} | p{3cm} | }
\hline $K$ & $g_{\mathbb{C}P^2}$ & alt? & $\sigma$ & Arf & $g_4$ & $u$ & \pbox{3cm}{\footnotesize Possible Slice Degrees}  &
\pbox{3cm}{\footnotesize Realized Slice Degrees}  \\
\hline
$m8_{19}$ & $0$ & N & $6$ & $1$ & $3$ & $3$ & 4, 3 & 3 \\
\hline
\end{tabular}
\vspace{0.5cm}

\end{center}

\subsection{Where to go}

Note that we have not found a case where two knots have the same signature and Arf invariant yet do not have the same $\mathbb{C}P^2$-genus. This leads to an obvious conjecture:

\begin{conjecture}\label{conjecture same sig arf same cp2 genus}
Let $K_1, K_2$ be knots with the same signature and Arf invariant. Then $K_1, K_2$ have the same $\mathbb{C}P^2$-genus.
\end{conjecture}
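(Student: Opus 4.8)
The plan is to reduce the conjecture to the existence of a single function $f:\mathbb{Z}\times(\mathbb{Z}/2)\to\mathbb{Z}_{\geq 0}$ with $g_{\mathbb{C}P^2}(K)=f(\sigma(K),\mathrm{Arf}(K))$ for every knot $K$, and then to prove the two inequalities $g_{\mathbb{C}P^2}(K)\geq f(\sigma(K),\mathrm{Arf}(K))$ and $g_{\mathbb{C}P^2}(K)\leq f(\sigma(K),\mathrm{Arf}(K))$ separately. The candidate $f$ is read off the tables above; note that it must be genuinely asymmetric in $\sigma$, since $\mathbb{C}P^2$ carries a definite intersection form and the class $\gamma=[\mathbb{C}P^1]$ absorbs signature of only one sign.

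First I would attack the lower bound. The signature bound of Gilmer--Viro (Corollary \ref{corollary gilmer viro}) and the Arf bound of Robertello (Corollary \ref{corollary robertello}) are already manifestly functions of $\sigma(K)$ and $\mathrm{Arf}(K)$ alone, so the issue is whether they are ever beaten by the finer obstructions. They are: Yasuhara's proof that $T(2,2x+1)$ is not slice uses gauge-theoretic input of Kronheimer--Mrowka type (Corollary \ref{corollary kronheimer mrowka}), and the Ozsv\'ath--Szab\'o $d$-invariant obstructions (Corollary \ref{corollary ozvath szabo}) likewise see information invisible to $\sigma$ and $\mathrm{Arf}$. The real content of this half is therefore to prove that the sharpest available obstruction nonetheless collapses to a function of $(\sigma,\mathrm{Arf})$. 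For \emph{alternating} knots this collapse is plausible and essentially free, because the concordance invariants $\tau$ and $s$ satisfy $\tau(K)=-\sigma(K)/2$ and $s(K)=-\sigma(K)$, so the slice-genus obstructions derived from them reduce to $\sigma$; since nearly every knot in the tables is alternating, the alternating case is the natural first target.

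The harder half is the upper bound, and this is where I expect the main obstacle to lie. The constructions used in the paper---coherent band surgery, the unknotting-number bound, and concordance---are carried out knot-by-knot and produce surfaces whose genus is governed by quantities such as $u(K)$ that are \emph{not} functions of $(\sigma,\mathrm{Arf})$. A uniform upper bound would require a construction that, given only $\sigma(K)$ and $\mathrm{Arf}(K)$, builds a surface of genus exactly $f(\sigma,\mathrm{Arf})$ in $\mathbb{C}P^2\setminus B^4$ for every $K$ with those invariants. The natural mechanism is the Weintraub--Yasuhara absorption behind the fact that $u(K)=1\Rightarrow K$ is slice in $\mathbb{C}P^2$: each crossing change of the favorable sign can be traded for one unit of degree against $\gamma$, while unfavorable changes and any remainder cost genus. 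Making this accounting tight---so that the number of unabsorbable moves is controlled by a $(\sigma,\mathrm{Arf})$-expression that \emph{matches} the lower bound rather than merely dominating it---is the crux.

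The deepest difficulty is that the conjecture is extrapolated from knots of at most eight crossings, where the finer invariants happen to track $\sigma$ and $\mathrm{Arf}$; for higher-crossing knots they need not. I would therefore treat the statement as a working hypothesis and begin by searching for a counterexample: a knot whose $d$-invariant or instanton obstruction forces $g_{\mathbb{C}P^2}$ strictly above its table value, or one admitting a low-genus band-surgery certificate below it. Absent such a knot, the realistic first theorem is the restricted statement for alternating knots, where the lower-bound collapse above is available and Theorems \ref{mainresult2} and \ref{mainresult3} already pin down one side of the $|\sigma|=4$ cases; the full conjecture would then hinge on the uniform upper-bound construction surviving the non-alternating and higher-crossing regimes.
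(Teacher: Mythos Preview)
The statement you are attempting is labeled \emph{Conjecture}~\ref{conjecture same sig arf same cp2 genus} in the paper, and the paper offers no proof of it whatsoever. It is recorded purely as an empirical observation drawn from the tables of prime knots through eight crossings, together with the author's guesses for the nine undetermined cases. There is therefore nothing in the paper to compare your proposal against.

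Your text is not a proof but a research outline, and you appear to recognize this: you explicitly call the statement a ``working hypothesis'' and propose hunting for counterexamples before attempting the positive direction. That instinct is correct. The genuine gap is that neither half of your plan closes. For the lower bound, you note yourself that the Kronheimer--Mrowka and Ozsv\'ath--Szab\'o obstructions see information beyond $(\sigma,\mathrm{Arf})$; the claim that they nonetheless ``collapse'' to a function of $(\sigma,\mathrm{Arf})$ is exactly the conjecture restated, not a step toward proving it. For the upper bound, the Weintraub--Yasuhara absorption trades a crossing change for degree, but the number of crossing changes needed is governed by $u(K)$ or the Gordian distance to a slice knot, neither of which is determined by $(\sigma,\mathrm{Arf})$; there is no known mechanism that builds a surface of prescribed genus from signature and Arf data alone. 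So the crux you identify---making the accounting tight---is not a technical wrinkle but the entire content of the conjecture, and nothing in your outline supplies it.
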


For those remaining 9 prime knots of 7- and 8-crossings for which the $\mathbb{C}P^2$-genus is not definitively known, we reduced the set of possibilities down to a 2-element set: either $\{0,1\}$ or $\{1,2\}$. Following Conjecture \ref{conjecture same sig arf same cp2 genus}, the author suspects that $m7_4, m8_4, 8_{16}, m8_{16}$, and $8_{18}$ have $\mathbb{C}P^2$-genus 0, while $7_5$, $8_2$, and $8_{15}$ have $\mathbb{C}P^2$-genus 1, and $8_{19}$ has $\mathbb{C}P^2$-genus 2.

In the case where we have not fully obstructed a knot from being slice, we have narrowed down the possible slice degrees to at most two. It seems reasonable that one could obstruct some remaining slice degrees using techniques such as the popular Donaldson diagonalization argument \cite{Lisca2007} \cite{Williams2008} \cite{Jabuka2018} adapted to $\mathbb{C}P^2\setminus B^4$. This will be the author's next approach.

\subsection{How this paper is structured}

In Section \ref{section upper bounds} we give more details as to how the smooth 4-genus, unknotting number, and known knot concordances provide us with upper bounds. We follow by proving Theorem \ref{mainresult1}. In Section \ref{section lower bounds} we provide several utility corollaries. Each allows the obstruction of a certain subset of slice degrees. Using these, we prove Theorems \ref{mainresult2} and \ref{mainresult3}. In Section \ref{section computation} we explain our computations of the $\mathbb{C}P^2$-genus for a finite set of prime knots of $9-$ and $10-$crossings and an infinite family of knots that were outside the primary scope of this work. In Appendix \ref{appendix surgeries for MR1} we show the coherent band surgeries required to fully prove Theorem \ref{mainresult1}. In Appendix \ref{appendix surgeries knots to links} we show the coherent band surgeries required to justify our applications of Theorem \ref{mainresult1}.

\subsection{Acknowledgements}

The author would like to thank Rustam Sadykov and Dave Auckly for helpful discussions, Victor Turchin and Mark Hughes for being thoughtful listeners, and Akira Yasuhara and Chuck Livingston for helpful comments on an early draft of this paper.

\section{Upper bounds}\label{section upper bounds}

There are two preliminaries which the reader may find helpful in order to better understand both the sliceness conditions in Subsection \ref{subsection trivial sliceness} and the proof of Theorem \ref{mainresult1} in Subsection \ref{subsection proof of mainresult1}.

The first preliminary is a discussion of coherent band surgery and the cobordisms related to this operation. This is covered in Subsection \ref{subsection band surgery}. The second preliminary involves a discussion about the handlebody decomposition of $\mathbb{C}P^2\setminus B^4$ and how its elements play a role in the constructive aspects of this work. This is discussed in Subsection \ref{subsection elements of CP2lessB4}.

\subsection{Band surgery}\label{subsection band surgery}

\begin{definition}
Let $L$ be a link in $S^3$. A \textit{band surgery} on $L$ is an embedding $b:I\times I\rightarrow S^3$ such that $L\cap b(I\times I)=b(I\times \partial I)$. The surgery is \textit{coherent} if the link
\[L'=\big[ L\setminus \text{int}[b(I\times\partial I)]\big] \cup_{\partial I\times \partial I}  b(\partial I \times I) \]
is oriented. Otherwise, the surgery is \textit{non-coherent}. We say that the link $L'$ is obtained from $L$ via the band surgery $b$.
\end{definition}

\begin{remark}
A band surgery may also be referred to as a \textit{band move} in the literature.
\end{remark}

\begin{figure}[h]
\includegraphics[width=.8\textwidth]{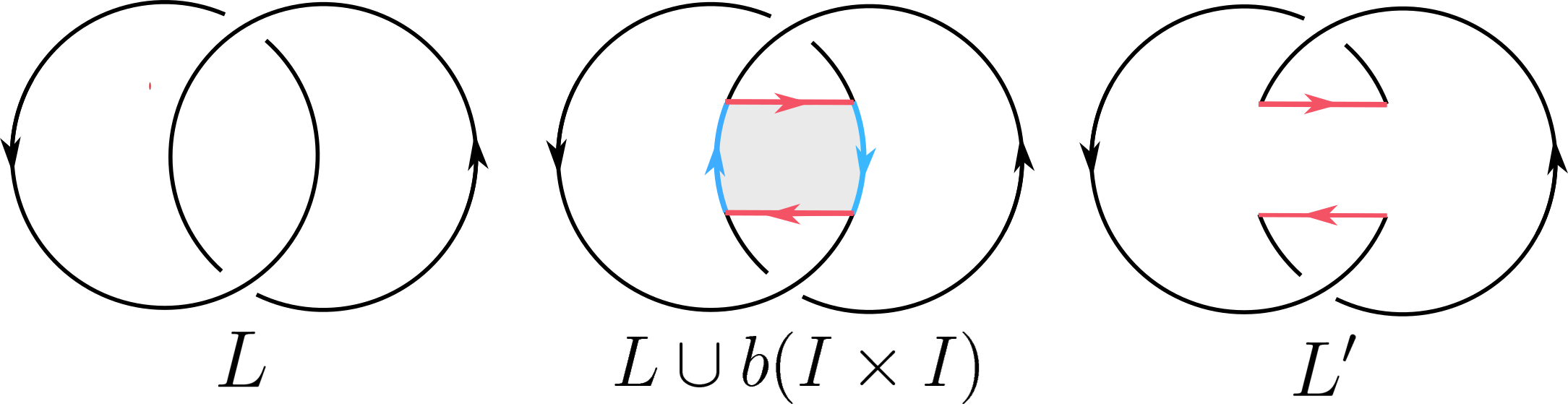}
\caption{A coherent band surgery from link $L$ to link $L'$.}\label{figure band surgery}
\end{figure}

Let $L'$ be a link obtained from $L$ via band surgery $b$. If $b(I\times I)$ intersects only a single component of $L$, then $L'$ will have exactly one more component than $L$. If $b(I\times I)$ intersects two components of $L$, then $L'$ will have exactly one less component than $L$. A band surgery either joins two components of $L$ into one or splits one component of $L$ into two. All components of $L$ that do not intersect $b(I\times I)$ remain unchanged. It follows from elementary surgery theory that there is a genus 0 cobordism $C\subset S^3\times I$ between $L$ and $L'$ where $C$ is a "pair of pants" between the component(s) that are changed and a series of disjoint cylinders elsewise.

\begin{figure}[h]
\includegraphics[width=.7\textwidth]{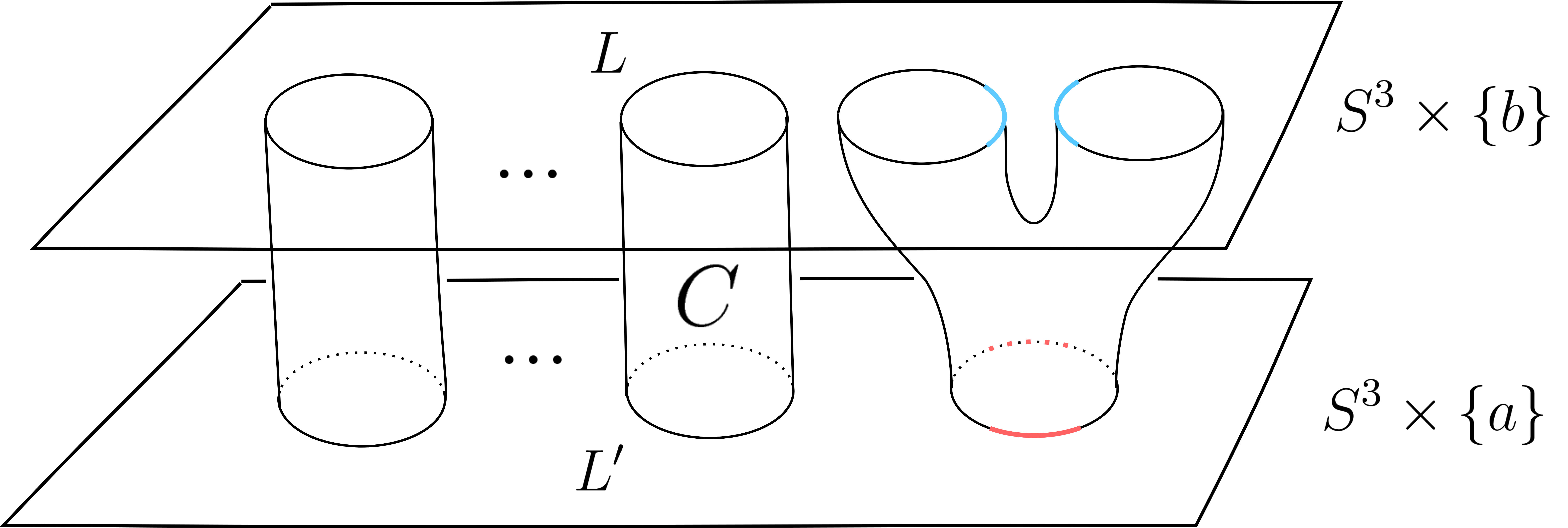}
\caption{A cartoon of a genus 0 cobordism $C$ in $S^3\times I$ with boundary components $L\subset S^3\times\{b\}$ and $L'\subset  S^3\times\{a\}$, where $0\leq a<b\leq 1$. Such a corbordism would be the result of performing a coherent band surgery to obtain $L'$ from $L$.}\label{figure band surgery cobordism}
\end{figure}

\subsection{Handlebody decomposition of $\mathbb{C}P^2\setminus B^4$}\label{subsection elements of CP2lessB4}

\subsubsection{Basic decomposition}

The handlebody decomposition of $\mathbb{C}P^2\setminus B^4$ is a single 4-dimensional 0-handle $h^0\cong D^4$ and a single 4-dimensional 0-handle $h^2\cong D^2\times D^2$ together with a gluing  map $\phi:\partial D^2 \times D^2 \rightarrow \partial h^0$. Thinking of $S^1\times D^2$ as a trivial $D^2$-fiber bundle over $S^1$, $\phi$ maps the 0-section of $S^1\times D^2$ to an unknot in $\partial h^0$, while mapping the $D^2$ fibers of $S^1\times D^2$ into $\partial h^0$ so that the fibers have exactly one full positive \enquote{twist.} Details of this construction can be found in Scorpan's illustrative text on 4-manifolds \cite{Scorpan2005}.

\subsubsection{Labeling of slices in $h^0$}

Any disc $D^n$ is diffeomorphic to the cone $C(S^{n-1})$ smoothed out over the singular point. Using this fact, it is not hard to see that the cylinder $S^3\times [0,1]$ embeds smoothly into $h^0$ via a map $\psi$ so that $S^3\times \{1\}$ maps to $\partial h^0$ and $S^3\times\{0\}$ maps into the interior of $h^0$. We fix such an embedding $\psi$ once and for all. We label the boundary $\partial h^0$ as $S^3\times \{1\}$ and the image $\psi(S^3\times \{x\})$ in the interior of $h^0$ as $S^3\times\{x\}$ for each $x\in[0,1)$. We recognize that $h^0\setminus \psi(S^3\times(0,1])$ is diffeomorphic to $D^4$. A diagram of this labeling is provided in Figure \ref{figure label slices of h^0}.


\begin{figure}[h]
\includegraphics[width=0.4\textwidth]{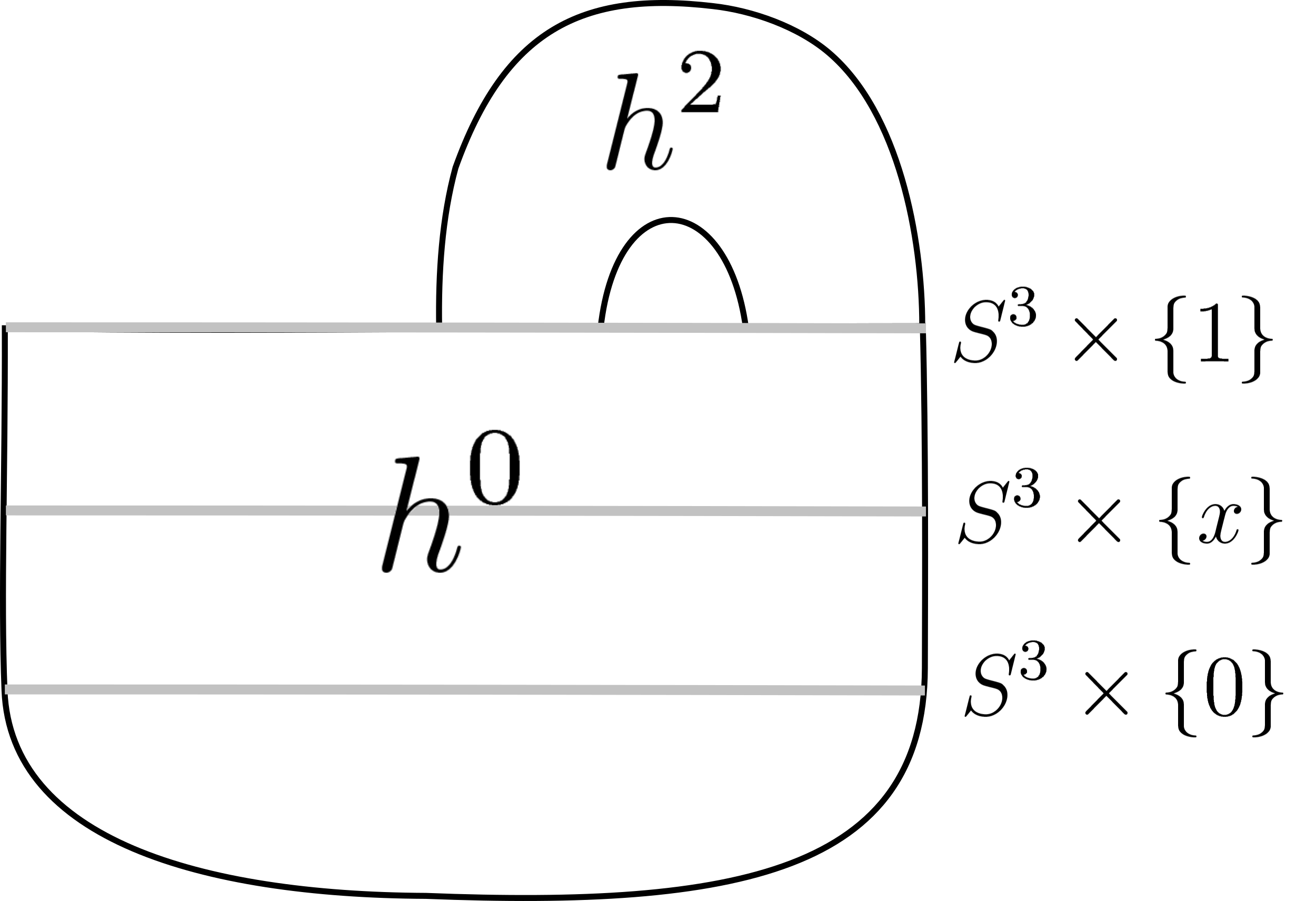} \\
\caption{A flattened (lower-dimensional) handlebody diagram of $\mathbb{C}P^2\setminus B^4$ with labels given by $\psi$.
}\label{figure label slices of h^0}
\end{figure}

\subsubsection{Torus link $T(n,n)$}

The core disc of the 2-handle $h^2$ is $D:=D^2\times 0$. We may push any number of parallel copies $D_1, ..., D_n$ off of $D$ and they will be embedded in $h^2$ without pair-wise or self-intersections. Since $D_1, ..., D_n$ are 2-dimensional discs, their boundaries $\partial D_1, ..., \partial D_n$ are 1-dimensional spheres. Due to the $+1$-twist of the 2-handle attachment, the boundaries $\partial D_1,...., \partial D_n$ form a $T(n,n)$ torus link first in the attaching region $S^1\times D^2$ and then in turn in the boundary $S^3\times\{1\}$ to which $S^1\times D^2$ is attached via $\phi$. A diagram of this is shown in Figure \ref{figure core discs as cups}.


\begin{figure}[h]
\includegraphics[width=0.9\textwidth]{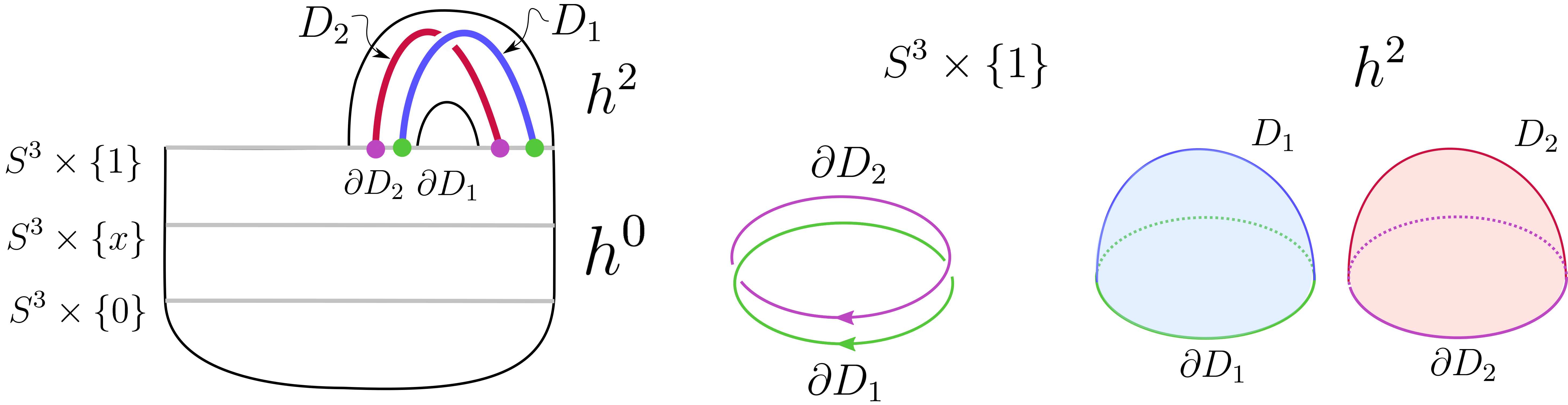}
\caption{(Left) A flattened (lower-dimensional) handlebody diagram of $\mathbb{C}P^2\setminus B^4$. (Center) The link $T(2,2)=\partial D_1\cup \partial D_2$ sitting in the boundary $S^3\times\{1\}$ of $h^0$ with particular orientation. (Right) Copies $D_1, D_2$ of the core disc $D^2\times 0$ of $h^2$ sitting inside $h^2$, pictured in full dimension with unlinked boundaries for the sake of visualization.}\label{figure core discs as cups}
\end{figure}


\begin{figure}[h]
\includegraphics[width=0.32\textwidth]{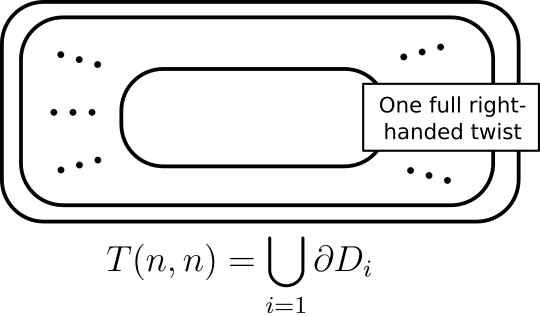}
\caption{The link $T(n,n)$ is unoriented in the diagram. Choice of orientation will determine which coherent band surgeries are possible and the homological degree of a surface with components $D_1,...,D_n$.}\label{linktnn}
\end{figure}

\subsubsection{Homological degree}

When pushing off a parallel copy $D_i$ of the core disc $D$, the orientation may be chosen to be either compatible or or incompatible with the orientation of $D$. The boundary component $\partial  D_i$ of $T(n,n)$ inherits an orientation from $D_i$. Thus, the link $T(n,n)$ has oriented components $\partial D_1, ..., \partial D_n$. All the constructions we perform will involve creating a cobordism $C$ between a knot $K$ and a link $T(n,n)$. Since the link $T(n,n)$ may be capped off with $n$ discs $D_1,...,D_n$, it follows that $S_K=C\cup_{T(n,n)}\bigcup_{i=1}^nD_i$ is a surface in $\mathbb{C}P^2\setminus B^4$ with boundary $K\subset \partial(\mathbb{C}P^2\setminus B^4)$. In particular, $S_K$ represents the class $d\gamma\in H_2(\mathbb{C}P^2\setminus B^4, \partial; \mathbb{Z})$, where $\gamma$ is the generator of $H_2(\mathbb{C}P^2\setminus B^4, \partial; \mathbb{Z})\cong\mathbb{Z}$ represented by $\mathbb{C}P^1$ in $\mathbb{C}P^2\setminus B^4$.

The degree $|[S_K]|=d$ is determined solely by the orientations of the discs $D_1,...,D_n$ and hence by the orientations of the components $\partial D_1,...,\partial D_n$ of $T(n,n)$. The integer $d$ is exactly $d_+-d_-$, where $d_+$ is the number of components of $T(n,n)$ with orientation matching the orientation of $\partial D$ and $d_-$ is the number of components of $T(n,n)$ with orientation opposed to the orientation of $\partial D$.

\subsection{Knots quickly seen to be slice in $\mathbb{C}P^2$}\label{subsection trivial sliceness}

If a knot $K$ satisfies any of the following conditions, then it is slice in $\mathbb{C}P^2$. 
\begin{enumerate}
	\item $g_4(K)=0$.
	\item $u(K)=1$, where $u(K)$ is the unknotting number of $K$.
	\item $K$ is concordant to a knot $J$ such that $g_{\mathbb{C}P^2}(J)=0$.
\end{enumerate}
The author makes no claim of being the first to develop any of these arguments. Exposition is provided only for the sake of readability and reference.

\subsubsection{4-genus as an upper bound}

\begin{lemma}
\label{lemma 4genus upper bound}
Let $K$ be a knot with smooth 4-genus $g_4(K)$. Then
\[g_{\mathbb{C}P^2}(K)\leq g_4(K)\]
\end{lemma}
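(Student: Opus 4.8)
The plan is to reduce everything to the observation recorded in Remark~\ref{remark 4-genus an upper bound on M-genus}, namely that $g_4 = g_{S^4}$ because $S^4 \setminus B^4 \cong D^4$. I would begin with a genus-minimizing surface for $g_4(K)$: an orientable surface $S_K$ of genus $g_4(K)$, smoothly and properly embedded in $D^4$, with $\partial S_K = K \subset S^3 = \partial D^4$. The entire content of the lemma is then to transplant the pair $(D^4, S_K)$ into $\mathbb{C}P^2 \setminus B^4$ without raising the genus and keeping the boundary equal to $K$. Since $g_{\mathbb{C}P^2}(K)$ is by definition a minimum over all such surfaces, exhibiting a single copy of $S_K$ inside $\mathbb{C}P^2 \setminus B^4$ yields the inequality at once.

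The geometric heart of the argument is to locate a standardly embedded $4$-ball in $\mathbb{C}P^2 \setminus B^4$ that touches the boundary along a $3$-disc. I would pick any point $p$ on $\partial(\mathbb{C}P^2 \setminus B^4) = S^3$ and a boundary chart $\varphi \colon \mathbb{R}^4_+ \to \mathbb{C}P^2 \setminus B^4$ carrying $\{x_4 = 0\}$ into the boundary and the open half-space into the interior; the image of a closed half-ball is then an embedded $D^4$ whose intersection with $\partial(\mathbb{C}P^2 \setminus B^4)$ is a hemisphere $D^3$ of its boundary sphere. On the model $D^4$ side, I would first isotope $K$ inside $S^3 = \partial D^4$ into the interior of a closed hemisphere $D^3_+$ (every knot shrinks into a small ball in $S^3$), dragging $S_K$ along by an ambient isotopy supported in a collar so that $S_K$ still meets $\partial D^4$ only along $K \subset \operatorname{int}(D^3_+)$. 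Identifying $(D^4, D^3_+)$ with $(\varphi(D^4_+), D^3)$ by a diffeomorphism of pairs then deposits $S_K$ as a surface $\widetilde S_K$ of the same genus inside $\mathbb{C}P^2 \setminus B^4$, with $\partial \widetilde S_K = K$ lying on the boundary.

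I expect the main obstacle to be conceptual rather than computational: checking that $\widetilde S_K$ is genuinely \emph{properly} embedded, i.e.\ that its interior stays in the interior of $\mathbb{C}P^2 \setminus B^4$. This is exactly what the hemisphere arrangement buys us, since pushing $\partial S_K = K$ into $\operatorname{int}(D^3_+)$ forces the remainder of $S_K$ into $\operatorname{int}(D^4)$, which maps into the interior. It is worth flagging the tempting wrong proof at this point: one cannot simply shove $S_K$ into a collar $S^3 \times [0,1]$ of the boundary, as that would make every knot slice in every $M$. A collar is not a $4$-ball, and in fact no embedded $D^4$ can fill the \emph{entire} boundary sphere of $\mathbb{C}P^2 \setminus B^4$ unless $\mathbb{C}P^2 \setminus B^4 \cong D^4$. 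The resolution is that a one-dimensional $K$ never needs the whole boundary $S^3$, only a $3$-disc's worth, and such a $3$-disc is always the boundary trace of a locally standard $4$-ball.

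Finally, I would note that the argument uses no special feature of $\mathbb{C}P^2$: the very same transplantation shows $g_M(K) \le g_4(K)$ for every closed $4$-manifold $M$, with $g_4 = g_{S^4}$ being the edge case $M = S^4$. This exhibits Lemma~\ref{lemma 4genus upper bound} as the $\mathbb{C}P^2$ instance of a general principle and explains why it is the natural first upper bound to record.
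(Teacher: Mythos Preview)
Your proposal is correct and follows essentially the same approach as the paper: shrink $K$ by an ambient isotopy of the boundary $S^3$ into a small region, find a $D^4$ neighborhood (your half-ball in a boundary chart) meeting $\partial(\mathbb{C}P^2\setminus B^4)$ in a $3$-disc containing $K$, and insert the genus-$g_4(K)$ surface there. Your write-up is more careful than the paper's about the half-ball model and the ``properly embedded'' check, and your remark that the argument works for any closed $4$-manifold $M$ is a welcome addition, but the underlying idea is identical.
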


\begin{proof}
Let $K$ be a knot in $\partial (\mathbb{C}P^2\setminus B^4)$. By ambient isotopy of $S^3$, we can shrink $K$ to be as small as we want until there is a closed $D^4$ neighborhood $N$ around the shrunken $K$. By the definition of the smooth 4-genus, there is a surface $S_K$ of genus $g_4(K)$ in $N$ with $\partial S_K$ being the shrunken $K$. Thus, $K$ bounds a smoothly and properly embedded surface $S_K$ of genus $g_4(K)$ in $\mathbb{C}P^2\setminus B^4$. Since the $\mathbb{C}P^2$-genus of $K$ can only be equal to or smaller than the genus of any surface it bounds, we have the desired inequality.
\end{proof}

\subsubsection{Unknotting number as an upper bound}

\begin{lemma}
\label{lemma unknotting upper bound}
Let $K$ be a knot with unknotting number $u\in \mathbb{Z}_{\geq 0}$. Then
\[g_{\mathbb{C}P^2}(K)\leq \max\{u-1, 0\}\]
\end{lemma}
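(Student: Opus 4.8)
The plan is to reduce everything to the unknotting-number-one case, which the paper already treats as a known input (condition (2) of Subsection \ref{subsection trivial sliceness}: any knot $J$ with $u(J)=1$ is slice in $\mathbb{C}P^2$, a result attributed to Weintraub, see \cite{Yasuhara1992}), and to spend the remaining $u-1$ crossing changes on ordinary genus inside a collar $S^3\times[0,1]$, where $\mathbb{C}P^2$ plays no role and signs never enter. First I would dispose of $u=0$: then $K$ is the unknot, so $g_{\mathbb{C}P^2}(K)=0=\max\{-1,0\}$.

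For $u\geq 1$ I would fix a minimal unknotting sequence $K=K_0\to K_1\to\cdots\to K_u=U$ of $u$ crossing changes. The key structural fact is that two knots differing by a single crossing change cobound, in $S^3\times[0,1]$, an immersed annulus with exactly one transverse double point (the trace of the change); resolving that double point by tubing produces an embedded, connected, genus-one cobordism between them. Concatenating the first $u-1$ of these cobordisms yields an embedded connected cobordism $C\subset S^3\times[0,1]$ of genus $u-1$ with $\partial C=K\sqcup K_{u-1}$. This step is purely three-dimensional-plus-collar and is completely insensitive to crossing signs, so it presents no difficulty. The point of stopping at $K_{u-1}$ is that one further crossing change takes it to the unknot, so $u(K_{u-1})\leq 1$; hence $K_{u-1}$ either is the unknot or has unknotting number one, and in either case bounds a properly embedded disc $D$ in $\mathbb{C}P^2\setminus B^4$. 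Exactly as in the proof of Lemma \ref{lemma 4genus upper bound}, I would shrink $K_{u-1}$ into a small ball carrying $D$, realize $C$ in a collar of $\partial(\mathbb{C}P^2\setminus B^4)$, and glue to obtain $S_K=C\cup_{K_{u-1}}D$, a properly embedded surface with $\partial S_K=K$ and genus $u-1$. This gives $g_{\mathbb{C}P^2}(K)\leq u-1=\max\{u-1,0\}$.

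The hard part will be the one place where $\mathbb{C}P^2$ does genuine work, namely the base case that an unknotting-number-one knot is slice in $\mathbb{C}P^2$. The apparent obstacle there is a dependence on the sign of the crossing change, i.e. on whether $+1$- or $-1$-framed surgery on the crossing circle realizes it, since $\mathbb{C}P^2\setminus B^4$ is built from a single $+1$-framed $2$-handle (Subsection \ref{subsection elements of CP2lessB4}) and contains no embedded sphere of negative self-intersection. The resolution I would invoke is that a crossing change is reversible: if $+1$-framed surgery on the crossing circle turns $J$ into the unknot, then capping the unknot end exhibits $J$ as the boundary of a disc in $D^4\cup(\text{$+1$-framed $2$-handle})\cong\mathbb{C}P^2\setminus B^4$, while the opposite framing is obtained by running the same surgery in the reverse direction. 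Verifying this framing bookkeeping is the crux; once it is granted (or once the Weintraub/Yasuhara lemma is simply cited), the genus-$(u-1)$ cobordism assembly above is routine and the bound follows.
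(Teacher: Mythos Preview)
Your proof is correct and matches the paper's strategy: one crossing change is absorbed for free by the $2$-handle of $\mathbb{C}P^2\setminus B^4$, and the remaining $u-1$ each cost one unit of genus via a cobordism in a collar $S^3\times I$. The only difference is cosmetic---the paper spends the $2$-handle on the \emph{first} change (constructing the genus-$0$ cobordism $C_1$ explicitly from two parallel core discs, Figures~\ref{figure crossing change using handle} and~\ref{figure crossing change cobordism}) and then pays genus for $c_2,\dots,c_u$, whereas you reverse the order and cite the Weintraub/Yasuhara lemma for the final step. Your closing worry about sign dependence is unnecessary: as Figure~\ref{figure crossing change using handle} makes explicit, the single $+1$-framed handle realizes a crossing change of \emph{either} sign, the two cases differing only in the relative orientations of the two core-disc copies (i.e.\ whether the auxiliary Hopf link is L2a1$\{0\}$ or L2a1$\{1\}$), which affects the degree of the resulting slice disc but not its genus.
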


\begin{proof}
Let $K$ be a knot with unknotting number $u$. If $u=0$, then $K$ is the unknot, which is easily seen to be slice in $\mathbb{C}P^2$. It follows that $g_{\mathbb{C}P^2}(K)\leq \max\{0,u-1\}=0$ since $u-1=-1$. 

We now assume that $u\geq 1$. Fix a sequence of $u$ crossing changes $c_1,...,c_u$ for $K$ that turns $K$ into the unknot. Such a sequence exists by the definition of unknotting number. For $c_1$, we use two parallel copies $D_1$, $D_2$ of the core disc of the 2-handle in $\mathbb{C}P^2$ to realize the crossing change. Specifically, we perform coherent band surgeries between $\partial D_1, \partial D_2$ and $K$ to generate a genus 0 cobordism $C_1$ between $K\subset S^3\times \{1\}$ and $K_1\subset S^3\times\{(u-1)/u\}$, where $K_1$ differs from $K$ only by the crossing change $c_1$. The link  with components $\partial D_1, \partial D_2$ is a $T(2,2)$ link equivalent to either L2a1$\{0\}$ or L2a1$\{1\}$ depending on whether $c_1$ is a change from positive crossing to negative (L2a1$\{0\}$) or negative to positive (L2a1$\{1\}$). The necessary coherent band surgeries are shown in Figure \ref{figure crossing change using handle} and the resulting genus 0 cobordism $C_1$ is shown in Figure \ref{figure crossing change cobordism}. 

There remain $u-1$ necessary crossing changes $c_2, c_3,...,c_u$ to turn $K_1$ into the unknot. Let $K_m$ be the knot obtained from $K_{m-1}$ by performing the crossing change $c_m$. For each remaining $c_i$, where $2\leq i \leq n$, we perform two coherent band surgeries as shown in Figure \ref{figure self surgery}. The first of the surgeries will generate a cobordism $C_{i,a}$ between $K_{i-1}\subset S^3\times\{(u-i+1)/u\}$ and $K_i \# L2a1$ where $L2a1$ is the link $L2a1\{0\}$ or $L2a1\{1\}$ depending on whether $c_i$ is a positive or negative crossing. The second of the surgeries will create a cobordism $C_{i,b}$ between $K_i \# L2a1$ and $K_i\subset S^3\times\{(u-1)/u\}$. The stacking of $C_{i,a}$ and $C_{i,b}$ gives a genus 1 cobordism $C_i$ between $K_{i-1}\subset S^3\times\{(u-i+1)/u\}$ and $K_i\subset S^3\times\{(u-i)/u\}$. The cobordism $C_i$ and the necessary coherent band surgeries needed to create it are shown in Figure \ref{figure self surgery}.

Stacking the cobordisms $C_1, C_2,...C_u$, we obtain a genus $u-1$ cobordism between $K$ and $K_u$. Since $K$ has unknotting number $u$ and we chose a sequence of crossing changes $c_1,...,c_u$ specifically to unknot $K$, it follows that $K_u$ is the unknot. The unknot is easily seen to be capped off with an embedded 2-disc, thus generating a genus $u-1$ properly embedded surface $\bigcup_{i=1}^u C_i$ in $\mathbb{C}P^2\setminus B^4$ with boundary $K$. This shows that $g_{\mathbb{C}P^2}(K)\leq u-1$ and hence $g_{\mathbb{C}P^2}(K)\leq \max\{u-1,0\}=u-1$, as desired.
\end{proof}


\begin{figure}[h]
\includegraphics[width=0.8\textwidth]{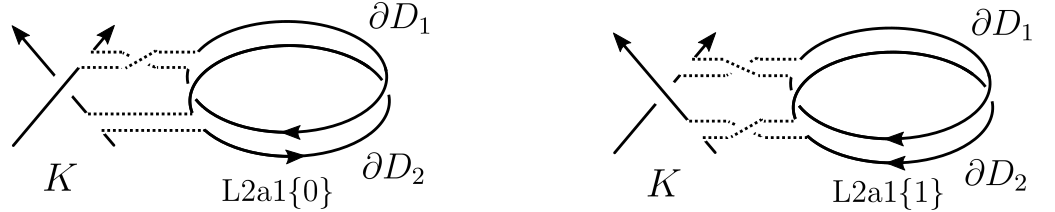}
\caption{A crossing may be changed from positive to negative (left) or from negative to positive (right) using the boundaries $\partial D_1$ and $\partial D_2$. 
}\label{figure crossing change using handle}
\end{figure}


\begin{figure}[h]
\includegraphics[width=0.7\textwidth]{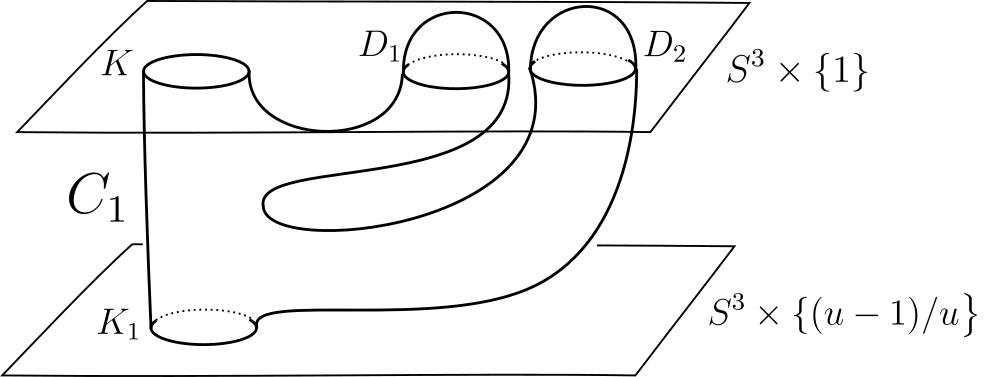}
\caption{The genus 0 cobordism $C_1$ between $K$ and $K_1$. The knot $K_1$ differs from $K$ by a single crossing change.}\label{figure crossing change cobordism}
\end{figure}


\begin{figure}[h]
\includegraphics[width=0.7\textwidth]{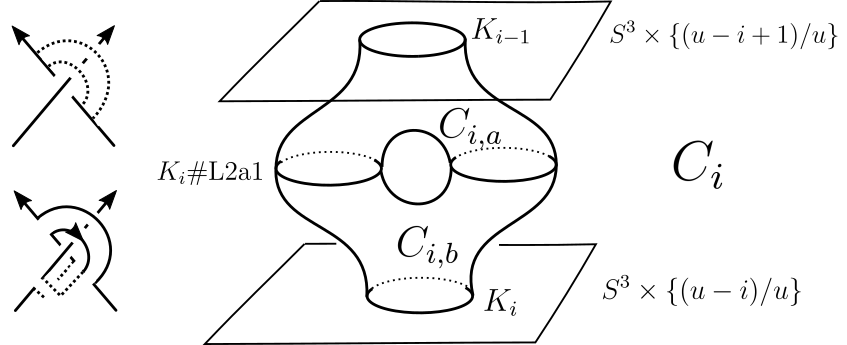}
\caption{Two coherent band surgeries are performed in succession to generate a genus 1 cobordism $C_i$ between $K_{i-1}\subset S^3\times\{(u-i+1)/u\}$ and $K_1\subset S^3\times\{(u-i)/u\}$.}\label{figure self surgery}
\end{figure}

\begin{remark}
Neither Lemma \ref{lemma 4genus upper bound} nor Lemma \ref{lemma unknotting upper bound} are strictly better than the other in terms of finding slice knots in $\mathbb{C}P^2$ quickly. Lemma \ref{lemma unknotting upper bound} is able to detect that $3_1$ is slice in $\mathbb{C}P^2$ but not $8_8$, while Lemma \ref{lemma 4genus upper bound} is able to detect that knot $8_8$ is slice in $\mathbb{C}P^2$ but not $3_1$.
\end{remark}

\subsubsection{Concordance as an upper bound}

\begin{lemma}\label{lemma concordance upper bound}
Let $K$ be a knot in $\partial (\mathbb{C}P^2\setminus B^4)\cong S^3$ and let $K$ be concordant to $J$. Then
\[g_{\mathbb{C}P^2}(K)=g_{\mathbb{C}P^2}(J).\]
\end{lemma}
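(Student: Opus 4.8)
The plan is to prove the two inequalities $g_{\mathbb{C}P^2}(K) \le g_{\mathbb{C}P^2}(J)$ and $g_{\mathbb{C}P^2}(J) \le g_{\mathbb{C}P^2}(K)$ separately, each by gluing a concordance annulus onto a minimal-genus surface, and then to conclude by the symmetry of the concordance relation. Since a concordance between $K$ and $J$ is symmetric in the two knots (reversing the $[0,1]$-direction of the annulus turns a concordance from $K$ to $J$ into one from $J$ to $K$), it suffices to establish one of the inequalities; the other follows by interchanging the roles of $K$ and $J$.

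To set up, fix a collar $c\colon S^3 \times [0,1] \hookrightarrow \mathbb{C}P^2 \setminus B^4$ of the boundary, with $c(S^3 \times \{1\}) = \partial(\mathbb{C}P^2 \setminus B^4)$ and $c(S^3 \times [0,1))$ running into the interior; such a collar exists since $\mathbb{C}P^2 \setminus B^4$ is a compact smooth manifold with boundary. A concordance is a smoothly embedded annulus, which I realize inside the outer half of this collar as $A = c(A_0)$ for an embedded annulus $A_0 \subset S^3 \times [1/2,1]$ with $A_0 \cap (S^3 \times \{1\}) = K \times \{1\}$ and $A_0 \cap (S^3 \times \{1/2\}) = J \times \{1/2\}$, meeting both levels transversally.

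Now take a surface $S_J \subset \mathbb{C}P^2 \setminus B^4$ realizing $g := g_{\mathbb{C}P^2}(J)$, with $\partial S_J = J$. After an ambient isotopy supported in the collar, I may assume that $S_J$ meets the outer half of the collar in the product $J \times [1/2,1]$. I then excise this product region and glue the annulus $A$ in its place along the common circle $c(J \times \{1/2\})$, smoothing the corner. The result $S_K := A \cup_J (S_J \setminus c(J \times (1/2,1]))$ is a smoothly and properly embedded surface with $\partial S_K = K$. Because $A$ is an annulus, this operation replaces one product collar of a boundary circle by another genus-$0$ piece, so $\mathrm{genus}(S_K) = \mathrm{genus}(S_J) = g$; and because $A$ lies entirely in the product collar, $[S_K] = [S_J]$ in $H_2(\mathbb{C}P^2 \setminus B^4, \partial; \mathbb{Z})$, so the slice degree is preserved as well. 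This gives $g_{\mathbb{C}P^2}(K) \le g_{\mathbb{C}P^2}(J)$, and the symmetric construction gives the reverse inequality, hence equality.

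The step I expect to require the most care — and the only genuine content beyond bookkeeping — is verifying that the gluing produces a genuinely smooth, properly embedded surface: one must check that the concordance annulus can be isotoped into the collar so as to be disjoint from the interior of $S_J$ and to agree with $S_J$ to first order along the gluing circle, so that corner-smoothing yields a smooth embedding, and that the result meets $\partial(\mathbb{C}P^2 \setminus B^4)$ transversally in exactly $K$. This is standard collar-and-corner-smoothing technology, but it is where the argument actually lives; the genus and degree computations and the appeal to symmetry are then immediate.
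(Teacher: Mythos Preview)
Your proposal is correct and follows essentially the same approach as the paper: glue the concordance annulus onto a minimal-genus surface for $J$ inside the collar of $\partial(\mathbb{C}P^2\setminus B^4)$ to obtain a surface of the same genus for $K$, then invoke symmetry of concordance for the reverse inequality. The paper's version is terser---it simply places $S_J$ with $\partial S_J\subset S^3\times\{0\}$ and stacks the concordance cobordism on top---while you spell out the collar-and-corner-smoothing details and additionally observe that the slice degree is preserved, but the underlying argument is the same.
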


\begin{proof}
We will show that $g_{\mathbb{C}P^2}(K)\leq g_{\mathbb{C}P^2}(J)$. Equality follows from the symmetry of concordance.

Let $S_J$ be a smoothly and properly embedded surface in $\mathbb{C}P^2\setminus B^4$ with genus $g(S_j)=g_{\mathbb{C}P^2}(J)$ and $\partial S_J=J\subset S^3\times\{0\}$. By definition of concordance \cite{livnai}, there is a genus 0 cobordism $C$ between $J$ in $S^3\times \{0\}$ and $K$ in $S^3\times \{1\}$. The surface $S_K=C\cup_J S_J$ is bounded by $K$ and has genus $g_{\mathbb{C}P^2}(J)$. The desired inequality follows.
\end{proof}

\subsection{Proof of Theorem \ref{mainresult1}}\label{subsection proof of mainresult1}

\begin{proof}[Proof of Theorem \ref{mainresult1}]
Let $L$ be a link that is listed in the statement of the theorem. Let $K$ be a knot such that $mK$, and hence $rmK$, is obtained from $L$ via a coherent band surgery.

Every knot is concordant to itself. By Theorem 3.3.2 \cite{livnai}, since $K$ is concordant to itself, there is a genus 0 cobordism $C_K$ between $K\subset S^3\times \{1\}$ and $K\subset S^3\times \{0\}$. The link $L\in S^3\times \{1/2\}$ is obtained from some link $T(n,n)=\cup_{i=1}^n D_i\subset S^3\times \{1\}$ via a series of coherent band surgeries such that the resulting cobordism $C_{L,a}$ between $T(n,n)$ and $L$ has only Morse critical points of the form $-x_1^2+x_2^2$. A proof of this for each specific link in the statement of the theorem is given diagrammatically in Appendix \ref{appendix surgeries for MR1}. By hypothesis, there is a genus 0 cobordism $C_{L,b}$ between $L\subset S^3\times \{1/2\}$ and $rmK\subset S^3\times\{0\}$. Since the cobordism $C_{L,b}$ only has a single Morse critical point of the form $-x_1^2+x_2^2$, stacking $C_{L,a}$ and $C_{L,b}$ to obtain $C_L=C_{L,a}\cup_L C_{L,b}$ will not introduce any genus. We cap off $C_L$ with the core discs $D_1,...,D_n$ in the 2-handle $h^2$ to obtain a disc $C_{rmK}$ bounded by $rmK\subset S^3\times\{0\}$. We take the boundary connect sum $C_K\natural C_{rmK}$ over the $K$ and $rmK$ boundary components in $S^3\times \{0\}$. We now have a surface $C_K\natural C_{rmK}$ with boundary components $K\subset S^3\times\{1\}$ and $K\# rmK\subset S^3\times \{0\}$ as shown in Figure \ref{figure boundary connect sum surface}. By Theorem 3.1.1 of \cite{livnai}, $K\#rmK$ is a slice knot. Thus, we may cap off the $K\# rmK$ boundary component of $C_K\natural C_{rmK}$ with a disc $D'$ to get $S=C_K\natural C_{rmK} \cup_{K\# rmK} D'$. The surface $S$ is orientable, is smoothly and properly embedded in $\mathbb{C}P^2\setminus B^4$, has no genus, and has boundary $K$. Thus, $K$ is slice in $\mathbb{C}P^2$.
\end{proof}


\begin{figure}[h]
\includegraphics[width=0.9\textwidth]{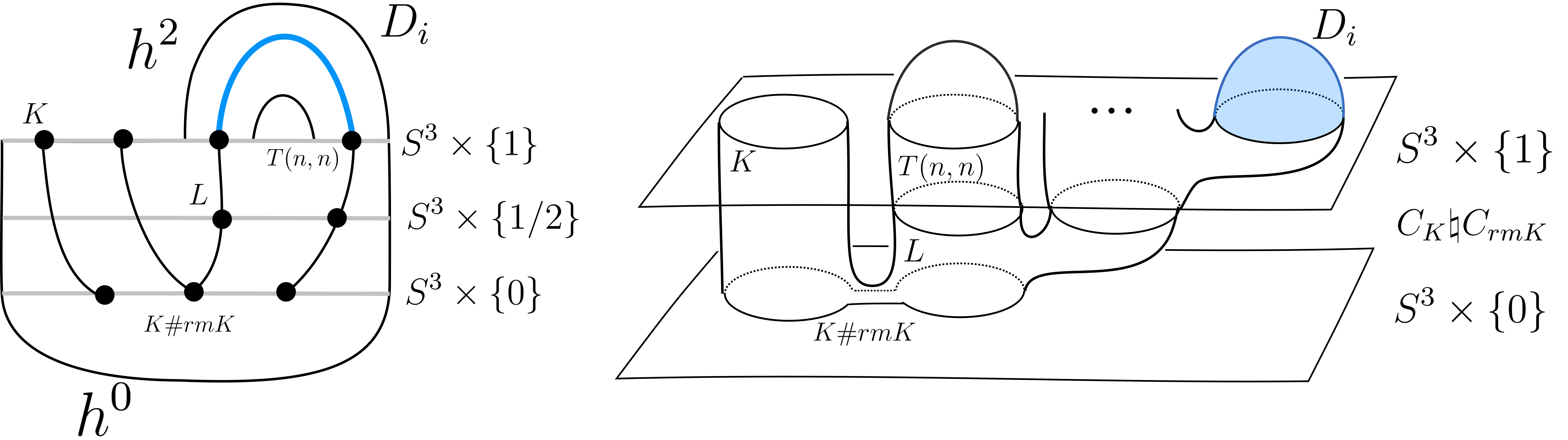}
\caption{$C_K\natural C_{rmK}$ is the cobordism between $K\subset S^3\times\{1\}$ and $K\#rmK\subset S^3\times \{0\}$.}\label{figure boundary connect sum surface}
\end{figure}

\section{Lower bounds}\label{section lower bounds}

\subsection{Obstructing homological degrees}

We adapt several results from low-dimensional topology to the world of knots and surfaces in $\mathbb{C}P^2\setminus B^4$. In particular, we rely on Corollary \ref{corollary ozvath szabo}, Corollary \ref{corollary kronheimer mrowka}, Corollary \ref{corollary gilmer viro}, Corollary \ref{corollary lawson}, and Corollary \ref{corollary robertello} to obstruct all possible slice degrees  for knots with certain characteristics. We list the utility of each Corollary and remind the reader that we are considering slice degrees in absolute value. For example, if we say that degree $5$ is obstructed, we really mean that both degrees $5$ and $-5$ are obstructed.

\begin{itemize}
	\item Corollary \ref{corollary ozvath szabo}. Positive signatures of alternating knots obstruct small slice  degrees.
	\item Corollary \ref{corollary kronheimer mrowka}. The smooth 4-genus obstructs large slice degrees.
	\item Corollary \ref{corollary gilmer viro}. The signature obstructs almost all even slice degrees.
	\item Corollary \ref{corollary lawson}. Knots with odd slice degree obstruct their mirrors from having odd slice degrees.
	\item Corollary \ref{corollary robertello}. The Arf invariant obstructs half the remaining odd slice degrees.
\end{itemize}

\subsubsection{Positive signatures of alternating knots obstruct small degrees}

%
%

Theorem \ref{theorem ozvath szabo} makes use of Ozvath and Szabo's Tau invariant, which is derived from Knot Floer homology. For our calculations it is only important to note that when $K$ is an alternating knot, the Tau invariant $\tau(K)$ can be  expressed in terms of it's signature $\sigma(K)$. More on the Tau invariant can be found in Ozvath and Szabo's paper \cite{Ozvath2003}.


\begin{theorem}[Ozvath, Szabo \cite{Ozvath2003}]
\label{theorem ozvath szabo}
Let $W$ be a smooth, oriented four-manifold with $b_2^+(W)=b_1(W)=0$ and $\partial W=S^3$. If $S_K$ is any properly embedded surface in $W$ such that $\partial S_K = K$ for some knot $K$, then
\begin{align*}
2\tau (K)+\bigl| [S_K] \bigr| + [S_K]\cdot[S_K]\leq 2g(S_k)
\end{align*}
\end{theorem}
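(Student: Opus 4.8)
The plan is to realize $S_K$ as part of a spin$^c$ cobordism and to extract the inequality from the Maslov-grading shift of the induced map on Heegaard Floer homology, with the genus entering through the adjunction relation and $\tau$ entering through the large-surgery formula. Throughout I would use the characterization of $\tau(K)$ via integer surgeries: for $n \gg 0$ the group $\widehat{HF}(S^3_n(K))$ is computed by the knot filtration on $\widehat{CF}(S^3)$, and $\tau$ records the grading at which the distinguished generator survives, equivalently the correction term of large surgery on $K$.

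First I would reduce the ambient manifold. Capping the boundary with a $4$-ball gives a closed, negative semi-definite $\hat W = W \cup_{S^3} D^4$ with $b_1 = 0$, whose form agrees with that of $W$ on $H_2$; passing to the quotient by the radical reduces the semi-definite case to the negative definite one. The hypothesis $b_2^+(W)=0$ is exactly what controls the cobordism maps: for a negative semi-definite cobordism the map on $HF^\infty$ associated to the torsion spin$^c$ structure is nontrivial, and this nontriviality propagates to the plus-flavored maps that see $\tau$. I may then assume $W$ is built from $D^4$ by $2$-handles along a framed link with negative semi-definite framing matrix, since $3$- and $4$-handles affect neither $H_2(W,\partial W)$ nor the relevant maps, so that $[S_K]\cdot[S_K]\le 0$ and the norm $|[S_K]|$ are genuine invariants of the class.

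Next I would close up the surface. Attaching a $2$-handle to $W$ along $K$ with large framing $N$ and capping $S_K$ by the core disc produces a closed surface $\hat S$ of genus $g(S_K)$ inside a cobordism $X$ from $S^3$ to $S^3_N(K)$, with $[\hat S]\cdot[\hat S] = [S_K]\cdot[S_K] + N$, which is positive for $N$ large. The induced map $F_{X,\mathfrak s}$ shifts the Maslov grading by $\tfrac14\big(c_1(\mathfrak s)^2 - 2\chi(X) - 3\sigma(X)\big)$. Nontriviality of $F_{X,\mathfrak s}$ on the large-surgery generator computing $\tau$ (guaranteed by the $b_2^+=0$ control together with the surgery-formula identification of the target) bounds this shift below, while the adjunction relation $\langle c_1(\mathfrak s),[\hat S]\rangle + [\hat S]\cdot[\hat S] \le 2g(\hat S)-2$ constrains the evaluation of $c_1$. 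Feeding adjunction into the grading inequality and letting $N \to \infty$ cancels the framing, replaces the surgery correction term by $\tau$, and after optimizing over the finitely many contributing spin$^c$ structures assembles the surviving terms into $2\tau(K) + |[S_K]| + [S_K]\cdot[S_K] \le 2g(S_K)$.

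The hard part will be the two intertwined steps above: pinning down which spin$^c$ structure carries the $\tau$-generator and proving $F_{X,\mathfrak s}$ is nonzero on it (where the large-surgery formula and $b_2^+=0$ do the real work), and the grading bookkeeping that must collapse $c_1(\mathfrak s)^2$, $\sigma(X)$, and the $\tau$-shift into exactly the three stated terms. The appearance of $|[S_K]|$ is the crux of this bookkeeping: by Donaldson's theorem the closed definite lattice is diagonal $\langle -1\rangle^{\oplus n}$, its minimal characteristic vectors are $(\pm 1,\dots,\pm 1)$, and extremizing $\langle c_1(\mathfrak s),[\hat S]\rangle$ over them produces precisely the $\ell^1$-norm $|[S_K]| = \sum_i |a_i|$ in a diagonalizing basis. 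Verifying that this extremization is compatible with the nontriviality constraint, and checking the sign conventions so that the framing contribution cancels cleanly in the limit, is where I expect the genuine technical effort to lie.
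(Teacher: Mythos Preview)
This theorem is not proved in the paper at all: it is quoted verbatim as a result of Ozsv\'ath and Szab\'o \cite{Ozvath2003} and then used as a black box to derive the subsequent Lemma and Corollary. So there is no ``paper's own proof'' to compare your proposal against.

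That said, your outline is a recognizable sketch of the actual Ozsv\'ath--Szab\'o argument: negative-definiteness of the capped-off manifold, Donaldson diagonalization to identify the lattice with $\langle -1\rangle^{\oplus n}$ (whence the $\ell^1$-norm $|[S_K]|$ appears from extremizing over minimal characteristic vectors), the large-surgery formula tying the correction terms to $\tau$, and the grading-shift/adjunction bookkeeping. The main soft spots in your write-up are exactly where you flag them: the nontriviality of the relevant cobordism map on the $\tau$-generator, and the precise cancellation of the framing parameter. In the original paper these are handled via the absolute grading formula and the explicit large-surgery identification rather than a literal limit $N\to\infty$; you may find it cleaner to phrase the argument in terms of correction terms $d(S^3_N(K),\mathfrak s)$ directly rather than tracking a limit. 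Also, your reduction ``passing to the quotient by the radical reduces the semi-definite case to the negative definite one'' is not quite how the argument goes: $b_1=0$ and $b_2^+=0$ together already force the intersection form to be negative definite on $H_2/\mathrm{torsion}$, so no separate semi-definite reduction is needed.
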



\begin{lemma}\label{lemma ozvath szabo}
Let $K$ be an alternating knot that bounds a properly embedded surface $S_K$ in $\mathbb{C}P^2\setminus B^4$ with $[S_K]=d\gamma\in H_2(\mathbb{C}P^2\setminus B^4,\partial;\mathbb{Z})$. Then
\[g(S_K)\geq \frac{\sigma(K)}{2}+\frac{|d|(1-|d|)}{2}\].
\end{lemma}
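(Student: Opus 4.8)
The plan is to apply Theorem~\ref{theorem ozvath szabo} directly, taking $W = \mathbb{C}P^2 \setminus B^4$, and then to convert the resulting inequality into the stated lower bound on genus by substituting the known intersection-theoretic data for $\mathbb{C}P^2 \setminus B^4$ together with the relationship between $\tau$ and $\sigma$ for alternating knots.

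First I would verify that $W = \mathbb{C}P^2 \setminus B^4$ satisfies the hypotheses of Theorem~\ref{theorem ozvath szabo}, namely $b_2^+(W) = b_1(W) = 0$ and $\partial W = S^3$. Since $\mathbb{C}P^2$ is simply connected we have $b_1 = 0$, and removing an open ball does not change this. The intersection form on $H_2(\mathbb{C}P^2)$ is the rank-one form $(+1)$, so $b_2^+ = 1$ and $b_2^- = 0$. This is the point requiring care: the orientation matters. As stated the theorem needs $b_2^+(W) = 0$, so I expect the relevant manifold to be the \emph{reversed-orientation} copy, $\overline{\mathbb{C}P^2} \setminus B^4$, whose intersection form is $(-1)$ and which therefore has $b_2^+ = 0$. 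The standard way to handle this in the literature is to note that a surface $S_K$ of degree $d$ in $\mathbb{C}P^2 \setminus B^4$ becomes a surface of degree $d$ (with the same $[S_K]\cdot[S_K]$ computed against the now-negative-definite form) in $\overline{\mathbb{C}P^2}\setminus B^4$, and a sign flip propagates through the self-intersection term. The boundary is $S^3$ in either case.

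Next I would substitute the concrete invariants. We have $[S_K] = d\gamma$ with $\gamma$ the generator, so $\bigl|[S_K]\bigr| = |d|$ (using that $\gamma$ is primitive of square $\pm 1$), and $[S_K]\cdot[S_K] = -d^2$ in the negative-definite orientation. Feeding these into the inequality $2\tau(K) + \bigl|[S_K]\bigr| + [S_K]\cdot[S_K] \le 2g(S_K)$ gives
\begin{align*}
2g(S_K) \geq 2\tau(K) + |d| - d^2 = 2\tau(K) - |d|\bigl(|d|-1\bigr).
\end{align*}
Finally, for an alternating knot Ozv\'ath--Szab\'o's formula gives $\tau(K) = -\sigma(K)/2$ (in the sign convention consistent with \cite{Ozvath2003} and with the tables in this paper). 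Substituting $2\tau(K) = -\sigma(K)$ and dividing by $2$, together with writing $-|d|(|d|-1) = |d|(1-|d|) = d^2 - |d|$ appropriately, yields exactly
\begin{align*}
g(S_K) \geq \frac{\sigma(K)}{2} + \frac{|d|\bigl(1-|d|\bigr)}{2},
\end{align*}
as desired.

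The main obstacle I anticipate is purely bookkeeping: getting the orientation convention and the sign of $\tau$ consistent so that the negative-definite manifold is used and the $\tau = \pm\sigma/2$ identity lands with the correct sign to produce $+\sigma(K)/2$ rather than $-\sigma(K)/2$. Everything else is a direct substitution. I would double-check the sign by testing against a known value in the tables (for instance a positive-signature alternating knot where the bound should genuinely obstruct small slice degrees), which is precisely the role this lemma plays as advertised in the itemized summary preceding it.
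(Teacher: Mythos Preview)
Your overall strategy matches the paper's: pass to $\overline{\mathbb{C}P^2}\setminus B^4$ so that $b_2^+=0$, apply Theorem~\ref{theorem ozvath szabo}, and then invoke $\tau = -\sigma/2$ for alternating knots. However, there is a genuine gap, and it is not merely a convention issue to be resolved by checking tables.

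When you reverse the orientation of $\mathbb{C}P^2\setminus B^4$, the induced orientation on the boundary $S^3$ also reverses. The same underlying knot, viewed in the reversed-orientation $S^3$, is the mirror $rmK$ when referred back to the standard $S^3$. Hence Theorem~\ref{theorem ozvath szabo}, applied to the surface sitting in $\overline{\mathbb{C}P^2}\setminus B^4$, yields an inequality involving $\tau(rmK)$, not $\tau(K)$. This is exactly what the paper does: it obtains $g(\overline{S_K}) \geq \tau(rmK) + \tfrac{|d|(1-|d|)}{2}$ and then substitutes $\tau(rmK) = -\tau(K) = \sigma(K)/2$ together with $g(\overline{S_K})=g(S_K)$.

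Your version applies the theorem directly with $\tau(K)$ and substitutes $2\tau(K) = -\sigma(K)$; carrying that through honestly gives $g(S_K) \geq -\sigma(K)/2 + \tfrac{|d|(1-|d|)}{2}$, the wrong sign. Your closing paragraph anticipates that a sign needs care, but the specific mechanism that flips it---the knot mirroring under orientation reversal of the ambient $4$-manifold---is missing from the argument, and no amount of adjusting the $\tau=\pm\sigma/2$ convention alone will supply it.
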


\begin{proof}
Let $S_K$ be a properly embedded surface in $\mathbb{C}P^2\setminus B^4$ bounded by $K$ with $[S_K]=d\gamma\in H_2(\mathbb{C}P^2\setminus B^4,\partial;\mathbb{Z})$. Then $rmK$ bounds a surface $\overline{S_K}$ in $\overline{\mathbb{C}P^2}\setminus B^4$ with $[\overline{S_K}]= d\overline{\gamma} \in H_2(\overline{\mathbb{C}P^2}\setminus B^4,\partial;\mathbb{Z})$. By Theorem \ref{theorem ozvath szabo},
\begin{align}
g(\overline{S_K}) &\geq \tau(rmK)+\frac{|[\overline{S_K}]|+[\overline{S_K}]\cdot[\overline{S_K}]}{2} \nonumber \\
g(\overline{S_K}) &\geq \tau(rmK)+\frac{|d|-|d|^2}{2} \nonumber \\
g(\overline{S_K}) &\geq \tau(rmK)+\frac{|d|(1-|d|)}{2} \label{eq1}
\end{align}
Ozvath, Szabo proved that $\tau(K)=\frac{-\sigma(K)}{2}$ for alternating knots \cite{Ozvath2003}. Further, they showed that $-\tau(K)=\tau(rmK)$. Clearly, $g(S_K)=g(\overline{S_K})$. Making these substitutions into (\ref{eq1}) gives the desired inequality. \\
\end{proof}


\begin{corollary}
\label{corollary ozvath szabo}
Let $K$ be an alternating knot with signature $\sigma(K)$ that bounds a properly embedded 2-disc $D_K$ in $\mathbb{C}P^2\setminus B^4$ with $[D_K]=d\gamma\in H_2(\mathbb{C}P^2\setminus B^4,\partial;\mathbb{Z})$.
\begin{enumerate}
	\item If $\sigma(K)\geq 2$, then $|d|\notin\{0,1\}$,
	\item If $\sigma(K)\geq 4$, then $|d|\notin \{0,1,2\}$
\end{enumerate}
\end{corollary}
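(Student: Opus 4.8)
The plan is to invoke Lemma \ref{lemma ozvath szabo} with the single observation that a properly embedded 2-disc has genus zero, and then read off the two cases by a direct numerical comparison. First I would note that since $D_K$ is a 2-disc, $g(D_K)=0$, so setting $S_K=D_K$ in the inequality of Lemma \ref{lemma ozvath szabo} yields
\[0 \geq \frac{\sigma(K)}{2} + \frac{|d|(1-|d|)}{2},\]
which after clearing denominators rearranges to
\[\sigma(K) \leq |d|(|d|-1).\]
This single inequality governs both parts.

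For part (1), I would evaluate the right-hand side at the forbidden values: when $|d|=0$ or $|d|=1$ we have $|d|(|d|-1)=0$, so the inequality would force $\sigma(K)\leq 0$, contradicting the hypothesis $\sigma(K)\geq 2$. Hence neither value of $|d|$ can occur, giving $|d|\notin\{0,1\}$. For part (2), the same two computations again rule out $|d|\in\{0,1\}$, and additionally at $|d|=2$ we get $|d|(|d|-1)=2$, so the inequality would force $\sigma(K)\leq 2$, contradicting $\sigma(K)\geq 4$; hence $|d|\notin\{0,1,2\}$.

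Since essentially all of the topological content is already packaged in Lemma \ref{lemma ozvath szabo} --- via the Ozv\'ath--Szab\'o genus bound of Theorem \ref{theorem ozvath szabo} and the identification $\tau=-\sigma/2$ for alternating knots --- I do not expect any genuine obstacle in the corollary itself; it is a matter of substituting $g=0$ and checking a short finite list of integer values. The only point requiring a moment of care is the sign and orientation bookkeeping, namely that the bound depends only on $|d|$ rather than on the string-orientation of $K$, but this is precisely what the proof of the lemma has already absorbed through the passage to $\overline{\mathbb{C}P^2}\setminus B^4$.
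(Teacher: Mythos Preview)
Your proof is correct and follows essentially the same approach as the paper: both apply Lemma \ref{lemma ozvath szabo} with $g(D_K)=0$ to obtain $\sigma(K)\leq |d|(|d|-1)$ and then rule out the small values of $|d|$. The only cosmetic difference is that the paper solves the resulting quadratic inequality $|d|^2-|d|-\sigma(K)\geq 0$ to bound $|d|$ from below, whereas you check the forbidden values directly.
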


\begin{proof}
We prove (2). Part (1) is analogous. Suppose $K$ is an alternating knot with $\sigma(K)\geq 4$ that bounds a properly embedded 2-disc $D_K$ in $\mathbb{C}P^2\setminus B^4$ such that $[D_K]=d\gamma\in H_2(\mathbb{C}P^2\setminus B^4)$. By Lemma \ref{lemma ozvath szabo},
\begin{align*}
0		   	&\geq \sigma(K)+|d|(1-|d|) \\
|d|(|d|-1) 	&\geq \sigma(K) \\
|d|(|d|-1) 	&\geq 4 \\
|d|^2-|d|-4 &\geq 0.
\end{align*}
By simple algebra, $|d|\geq \frac{1}{2}+\frac{1}{2}\sqrt{1+4\cdot 4}>\frac{1}{2}+2=\frac{5}{2}$.
\end{proof}

\subsubsection{The smooth 4-genus obstructs large degrees.}

%
%


\begin{theorem}[Kronheimer, Mrowka \cite{Kronheimer1994}]
\label{theorem kronheimer mrowka}
Let $S$ be an oriented 2-manifold smoothly embedded in $\mathbb{C}P^2$ such that $[S]=d\gamma\in H_2(\mathbb{C}P^2;\mathbb{Z})$ with $d\geq 0$ and $g$ is the genus of $S$. Then $2g\geq (d-1)(d-2)$.
\end{theorem}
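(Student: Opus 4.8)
The statement is the Thom conjecture for $\mathbb{C}P^2$, so any honest proof must pass through gauge theory; there is no elementary route. The plan is to strip away the routine reductions and then invoke a genus (adjunction) inequality coming from the Seiberg--Witten equations. First I would dispose of the easy cases: since tubing the components of $S$ together produces a connected surface in the same homology class whose genus equals the sum of the component genera, I may assume $S$ is connected; and since $(d-1)(d-2)\le 0$ for $d\in\{1,2\}$, the inequality is automatic there, so all the content lies in the range $d\ge 3$ (with $d\ge 1$ to ensure $[S]=d\gamma\ne 0$).

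The core of the argument is the generalized adjunction inequality. I would equip $\mathbb{C}P^2$ with its Fubini--Study Kähler structure, with symplectic form $\omega$ and canonical class $K$ whose Poincaré dual is $-3\gamma$. The step I expect to be the main obstacle is exactly that $b_2^+(\mathbb{C}P^2)=1$: the Seiberg--Witten invariants are then not metric-independent but depend on a chamber, and the naive adjunction inequality (with a $+|K\cdot[S]|$ term) simply fails here, since that term has the wrong sign to produce the sharp bound. The correct move is to compute in the chamber determined by a large multiple of $\omega$. By Taubes' theorem identifying the Seiberg--Witten invariants with Gromov invariants on a symplectic four-manifold, the canonical $\mathrm{spin}^{c}$ structure has nonzero Seiberg--Witten invariant in this chamber; this is precisely the input that survives the wall-crossing subtlety.

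From the nonvanishing of this invariant I would extract the signed adjunction inequality
\[
2g(S)-2\;\ge\;[S]\cdot[S]+K\cdot[S],
\]
valid for a connected embedded surface with $[S]\cdot[S]\ge 0$ and $\omega\cdot[S]>0$ — both of which hold here, as $[S]\cdot[S]=d^2>0$ and $\omega\cdot[S]>0$ for $d\ge 1$. Substituting $[S]\cdot[S]=d^2$ and $K\cdot[S]=-3d$ (using $\gamma\cdot\gamma=1$) yields $2g-2\ge d^2-3d$, i.e. $2g\ge(d-1)(d-2)$, as desired. The analytic heart is establishing that inequality in the first place: one analyzes the Seiberg--Witten moduli space near $S$, using that the existence of a solution for the relevant $\mathrm{spin}^{c}$ structure forces a lower bound on genus, and a dimension/concentration argument then converts the nonvanishing of the invariant into the stated estimate.

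Finally, I would note the alternative and historically original route of Kronheimer and Mrowka, which bypasses Seiberg--Witten theory entirely. There one studies singular anti-self-dual ($SO(3)$) connections on $\mathbb{C}P^2\setminus S$ with prescribed cone-like holonomy along $S$, and relates the moduli space of such connections to the Donaldson polynomial invariants of $\mathbb{C}P^2$; a surface of too small a genus would force that moduli space to carry the wrong dimension or to be empty, contradicting the known nontriviality of the Donaldson series. Either route isolates the same principle: the gauge-theoretic $b_2^+=1$ chamber analysis is the genuine difficulty, while the passage from the adjunction inequality to $2g\ge(d-1)(d-2)$ is a one-line computation.
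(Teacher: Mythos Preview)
Your sketch is a reasonable high-level outline of how the Thom conjecture is actually proved, but note that the paper does not prove this theorem at all: it is simply quoted as a known result of Kronheimer and Mrowka and then applied as a black box in the subsequent lemma. So there is nothing to compare against; your proposal supplies content the paper deliberately omits.
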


The reason for the $d\geq 0$ condition is because the Thom Conjecture associates the embedded surface $S$ with an algebraic curve. The degree of an algebraic curve may not be negative.


\begin{lemma}
\label{lemma kronheimer mrowka}
Let $K$ be a knot that bounds a properly embedded 2-disc $D_K$ in $\mathbb{C}P^2\setminus B^4$ with $[D_K]=d\gamma\in H_2(\mathbb{C}P^2\setminus B^4,\partial;\mathbb{Z})$ and $d\geq 0$. Then
\[2g_4(K)\geq (d-1)(d-2)\]
where $g_4(K)$ is the slice genus of $K$.
\end{lemma}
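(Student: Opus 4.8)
The plan is to cap off the disc $D_K$ into a closed smoothly embedded surface in $\mathbb{C}P^2$ and then invoke Theorem \ref{theorem kronheimer mrowka} verbatim. The whole argument is a gluing construction followed by homological bookkeeping, and the $d\geq 0$ hypothesis will be inherited directly from the theorem.

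First I would restore the removed ball, writing $\mathbb{C}P^2=(\mathbb{C}P^2\setminus B^4)\cup_{S^3}\overline{B^4}$, where the identification is along $S^3=\partial(\mathbb{C}P^2\setminus B^4)=\partial\overline{B^4}$. By the very definition of the smooth 4-genus, $K$ bounds a properly embedded orientable surface $F$ of genus $g_4(K)$ inside $\overline{B^4}\cong D^4$, meeting the boundary sphere precisely in $K=\partial D_K$. After choosing collars so that $D_K$ and $F$ meet $S^3$ transversally and match up smoothly along $K$, and after orienting $F$ compatibly with $D_K$ (possible since both surfaces are orientable), the union $S=D_K\cup_K F$ is a closed oriented surface smoothly embedded in $\mathbb{C}P^2$.

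Next I would compute the genus of $S$. Since $D_K$ is a disc glued to $F$ along the single circle $K$, an Euler characteristic count gives $\chi(S)=\chi(D_K)+\chi(F)-\chi(S^1)=1+(1-2g_4(K))-0=2-2g_4(K)$, so $S$ has genus exactly $g(S)=g_4(K)$; gluing on a disc introduces no genus. I would then track the homology class. Because $\overline{B^4}$ is contractible, excision together with the long exact sequence of the pair $(\mathbb{C}P^2,\overline{B^4})$ furnishes an isomorphism $H_2(\mathbb{C}P^2\setminus B^4,\partial;\mathbb{Z})\xrightarrow{\cong}H_2(\mathbb{C}P^2;\mathbb{Z})$ sending $\gamma=[\mathbb{C}P^1]$ to the generator and sending the relative class of $D_K$ to the absolute class of its closure $S$. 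As $F\subset\overline{B^4}$ is null-homologous in $\mathbb{C}P^2$, this forces $[S]=d\gamma$ with the same integer $d\geq 0$.

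Finally, with $S$ a smoothly embedded closed oriented surface in $\mathbb{C}P^2$ of genus $g_4(K)$ and $[S]=d\gamma$, $d\geq 0$, Theorem \ref{theorem kronheimer mrowka} yields $2g_4(K)=2g(S)\geq(d-1)(d-2)$, as desired. The step I expect to require the most care is the homological identification: one must confirm that the degree $d$ of the relative class in the punctured manifold equals the degree of the absolute class of the capped-off surface, with orientations arranged so that the \emph{sign} of $d$ is genuinely preserved. This is exactly where the hypothesis $d\geq 0$ of Kronheimer--Mrowka must be honored rather than inadvertently flipped to $-d$, and it is the reason the lemma carries the same $d\geq 0$ restriction as the theorem it rests on.
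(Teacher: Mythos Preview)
Your proof is correct and follows essentially the same route as the paper: cap off $D_K$ with a minimal-genus surface for $K$ (the paper phrases this as a surface for $mK$ glued via an orientation-reversing identification, which is the same maneuver) to obtain a closed surface of genus $g_4(K)$ in $\mathbb{C}P^2$, then apply Theorem~\ref{theorem kronheimer mrowka}. Your version is more explicit about the Euler-characteristic and excision bookkeeping, but the underlying argument is identical.
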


\begin{proof}
Suppose that $K$ bounds a properly embedded 2-disc $D_K$ in $\mathbb{C}P^2\setminus B^4$ with $[D_K]=d\gamma\in H_2(\mathbb{C}P^2\setminus B^4,\partial;\mathbb{Z})$, where $d\geq 0$. By definition, $mK$ bounds an orientable surface $S_{mK}$ of genus $g_4(K)$ in $D^4$. We glue $D^4$ to $\mathbb{C}P^2\setminus B^4$ via an orientation-reversing diffeomorphism $\phi:\partial D^4 \hookrightarrow \partial (\mathbb{C}P^2\setminus B^4)$ that identifies $K$ to the image $\phi(mK)=K$. In doing so, we obtain an embedded closed surface $S=D_k\cup_K S_{mK}$ in $\mathbb{C}P^2$ with $g(S)=g(D_K)+g(S_{mK})=g(S_{mk})=g_4(K)$. By Theorem \ref{theorem kronheimer mrowka}, we have the desired inequality for $d\geq 0$.
\end{proof}

If we are ever able to obstruct slice degree $d$ for a knot, then we also have an obstruction to the slice degree $-d$. The argument for this follows:

Let $K$ be a knot such that it cannot bound a properly embedded disc in $\mathbb{C}P^2\setminus B^4$ with degree $d\in\mathbb{Z}$. Now suppose for contradiction that $K$ bounds a properly embedded disc $D_K$ in $\mathbb{C}P^2\setminus B^4$  with $[D_K]=-d\gamma\in H_2(\mathbb{C}P^2\setminus B^4, \partial; \mathbb{Z})$. By changing the string-orientation of $K$, we get $rK$. This changes the orientation of $D_K$ and thereby the sign of $[D_K]$. We now have that $rK$ bounds a properly embedded disc $\overline{D_K}$ with $[\overline{D_K}]=-[D_K]=-(-d)=d\gamma\in H_2(\mathbb{C}P^2\setminus B^4, \partial; \mathbb{Z})$. Forgetting about the orientation of $rK$, we just have an unoriented $K$. Now we have that $K$ bounds a properly embedded disc $\overline{D_K}$ with degree $d$, a contradiction.


\begin{corollary}
\label{corollary kronheimer mrowka}
Let $K$ be a knot with smooth four genus $g_4(K)$ that bounds a properly embedded disc $D_K$ in $\mathbb{C}P^2\setminus B^4$ with $[D_K]=d\gamma\in H_2(\mathbb{C}P^2\setminus B^4,\partial;\mathbb{Z})$. 
\begin{enumerate}
	\item If $g_4(K)\leq 2$, then $d\in\{0,1,2,3\}$,
	\item If $g_4(K) \leq 5$, then $d\in\{0,1,2,3,4\}$.
\end{enumerate}
\end{corollary}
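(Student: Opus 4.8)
The plan is to apply Lemma \ref{lemma kronheimer mrowka} essentially verbatim, after first using the sign-symmetry observation established in the paragraph preceding the corollary to reduce to the case $d \geq 0$. That observation shows that a slice degree $d$ is realizable if and only if $-d$ is, so it suffices to prove both statements for $d = |d| \geq 0$; the resulting conclusions are then read as statements about $|d|$.

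First I would invoke Lemma \ref{lemma kronheimer mrowka}, which under the hypothesis $d \geq 0$ yields $2g_4(K) \geq (d-1)(d-2)$. Combining this with the genus bound supplied in each part converts the problem into a finite check on a quadratic inequality in $d$.

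For part (1), the hypothesis $g_4(K) \leq 2$ gives $(d-1)(d-2) \leq 2g_4(K) \leq 4$. I would then tabulate the left-hand side for small nonnegative integers: the values of $(d-1)(d-2)$ for $d = 0,1,2,3$ are $2,0,0,2$, each at most $4$, whereas for $d = 4$ the value is $6 > 4$. Since $(d-1)(d-2)$ is increasing for $d \geq 2$, no larger $d$ can satisfy the inequality, so $|d| \in \{0,1,2,3\}$. For part (2), the hypothesis $g_4(K) \leq 5$ gives $(d-1)(d-2) \leq 10$; here the value at $d = 4$ is $6 \leq 10$ while the value at $d = 5$ is $12 > 10$, so $|d| \in \{0,1,2,3,4\}$.

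There is no genuine obstacle here: the substantive content is already packaged into Lemma \ref{lemma kronheimer mrowka}, and what remains is the elementary solution of $(d-1)(d-2) \leq 2g_4(K)$ over the nonnegative integers. The only points requiring a moment's care are to confirm that the reduction to $d \geq 0$ is legitimate—which is precisely the argument given immediately before the corollary statement—and to note that the monotonicity of $(d-1)(d-2)$ for $d \geq 2$ allows one to conclude from a finite check rather than having to bound $d$ a priori.
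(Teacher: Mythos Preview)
Your proposal is correct and matches the paper's intended approach: the paper states the corollary without proof, treating it as an immediate consequence of Lemma \ref{lemma kronheimer mrowka} together with the sign-symmetry paragraph, and your argument fills in exactly those elementary details.
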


\subsubsection{The signature obstructs almost all even degrees.}

%
%


\begin{theorem}[Gilmer \cite{Gilmer1981}, Viro \cite{Viro1970}, Yasuhara \cite{Yasuhara1996}]\label{theorem gilmer viro}
Let $K$ be a knot in $\partial (\mathbb{C}P^2\setminus B^4)\cong S^3$. Suppose that $K$ bounds a properly embedded surface $S_K$ in $\mathbb{C}P^2\setminus B^4$ and $[S_K]=d\gamma\in H_2(\mathbb{C}P^2\setminus B^4,\partial;\mathbb{Z})$ is divisible by 2. Then
\begin{align*}
4g(S_K)+2			&\geq \Bigl| d^2 - 2 - 2\sigma(K) \Bigr| \\
\end{align*}
\end{theorem}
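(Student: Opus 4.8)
The plan is to exploit the evenness of $d$ by passing to a branched double cover. Write $W=\mathbb{C}P^2\setminus B^4$, so that $\sigma(W)=1$ since the intersection form on $H_2(W)\cong\mathbb{Z}\langle[\mathbb{C}P^1]\rangle$ is $\langle+1\rangle$. Because $[S_K]=d\gamma$ is divisible by $2$, the mod-$2$ reduction of the Poincar\'e--Lefschetz dual of $[S_K]$ vanishes, so the meridian of $S_K$ determines a well-defined surjection $\pi_1(W\setminus S_K)\to\mathbb{Z}/2$ and hence a double cover of $W$ branched along $S_K$; call it $\pi\colon\widetilde{W}\to W$. Its boundary is the double cover $\Sigma_2(K)$ of $S^3=\partial W$ branched along $K$, which is a rational homology sphere. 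The inequality should then fall out of computing $\sigma(\widetilde W)$ and comparing it with $b_2(\widetilde W)$.

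First I would pin down the homology of $\widetilde W$. A transfer argument over $\mathbb{Q}$ identifies the invariant part of $H_*(\widetilde W;\mathbb{Q})$ with $H_*(W;\mathbb{Q})$, and since $W$ is simply connected one expects $b_1(\widetilde W)=b_3(\widetilde W)=0$; granting this, the second Betti number is read off from the Euler characteristic. The branched-cover formula gives $\chi(\widetilde W)=2\chi(W)-\chi(S_K)$, and with $\chi(W)=2$ and $\chi(S_K)=1-2g(S_K)$ this yields $\chi(\widetilde W)=3+2g(S_K)$, so $b_2(\widetilde W)$ is linear in $g(S_K)$. Since the signature of any symmetric bilinear form is bounded in absolute value by its rank, $|\sigma(\widetilde W)|\le b_2(\widetilde W)$.

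Next I would compute $\sigma(\widetilde W)$ via Hirzebruch's $G$-signature theorem applied to the involution on $\widetilde W$ whose fixed-point set is $\pi^{-1}(S_K)$. For a manifold with boundary this produces
\[
\sigma(\widetilde W)=2\sigma(W)-\tfrac12[S_K]^2+\mathrm{def}(K)=2-\tfrac{d^2}{2}+\mathrm{def}(K),
\]
where $[S_K]^2=d^2$ and the correction $\mathrm{def}(K)$ is the boundary defect carried by $\Sigma_2(K)$, namely the classical knot signature $\sigma(K)$ (the Tristram--Levine signature at $\omega=-1$) up to a sign fixed by the orientation conventions. Feeding this into $|\sigma(\widetilde W)|\le b_2(\widetilde W)$, multiplying through by $2$, and rearranging should deliver $4g(S_K)+2\ge|d^2-2-2\sigma(K)|$.

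The hard part will be the boundary bookkeeping that produces the \emph{exact} constants. Two points need care: fixing the precise value and sign of $\mathrm{def}(K)$ in the $G$-signature theorem with boundary, and accounting for the degenerate part of the intersection form of $\widetilde W$ coming from the image of $H_2(\Sigma_2(K);\mathbb{Q})$. It is this degeneracy, together with the single boundary circle of $S_K$, that shifts a naive estimate (which only yields a weaker constant) to the stated $-2$ and $+2$. Establishing $b_1(\widetilde W)=0$ likewise requires the anti-invariant part of the transfer computation rather than just the invariant part. These are exactly the delicate computations carried out by Gilmer \cite{Gilmer1981}, Viro \cite{Viro1970}, and Yasuhara \cite{Yasuhara1996}, so I would lean on their bookkeeping to nail the constants down.
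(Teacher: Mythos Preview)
The paper does not prove this theorem; it is quoted as a known result of Gilmer, Viro, and Yasuhara and then immediately specialized (in Corollary~\ref{corollary gilmer viro}) by setting $g(S_K)=0$. There is therefore no in-paper argument to compare your sketch against.

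For what it is worth, your outline is precisely the standard route those references take: pass to the double cover $\widetilde W\to W$ branched along $S_K$ (which exists exactly because $d$ is even), compute $\sigma(\widetilde W)$ from the $G$-signature/Viro formula with the knot-signature correction coming from $\partial\widetilde W=\Sigma_2(K)$, read off $b_2(\widetilde W)$ from $\chi(\widetilde W)=2\chi(W)-\chi(S_K)$, and compare via $|\sigma(\widetilde W)|\le b_2(\widetilde W)$. You have also correctly flagged the one genuine subtlety, namely that the naive combination of these ingredients produces constants slightly weaker than the stated $-2$ and $+2$, and that the vanishing of $b_1(\widetilde W)$ needs the anti-invariant piece of the transfer argument rather than just the invariant one. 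Since the paper itself treats the statement as a black box, deferring that sharpening and bookkeeping to the cited sources is entirely appropriate here.
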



\begin{corollary}
\label{corollary gilmer viro}
Let $K$ be a knot with signature $\sigma(K)$.
\begin{enumerate}
	\item If $\sigma(K)\leq -4$ or $\sigma(K)=4$ then $K$ does not bound a properly embedded 2-disc in $\mathbb{C}P^2\setminus B^4$ with even degree,
	\item If $\sigma(K)=-2$ and $K$ bounds a properly embedded 2-disc in $\mathbb{C}P^2\setminus B^4$ of even degree $d$, then $d=0$,
	\item If $\sigma(K)=0$ and $K$ bounds a properly embedded 2-disc in $\mathbb{C}P^2\setminus B^4$ of even degree $d$, then $|d|\in \{0,1\}$,
	\item If $\sigma(K)=2$ and $K$ bounds a properly embedded 2-disc in $\mathbb{C}P^2\setminus B^4$ of even degree $d$, then $|d|=2$,
	\item If $\sigma(K)=6$ or $\sigma(K)=8$ and  $K$ bounds a properly embedded 2-disc in $\mathbb{C}P^2\setminus B^4$ of even degree $d$, then $|d|=4$
\end{enumerate}
\end{corollary}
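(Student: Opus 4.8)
The plan is to specialize Theorem~\ref{theorem gilmer viro} to the case in which the embedded surface is a disc. If $K$ bounds a properly embedded $2$-disc $D_K$ with $[D_K]=d\gamma$ and $d$ even, then $g(D_K)=0$, so the inequality of Theorem~\ref{theorem gilmer viro} degenerates to
\[
2\geq \bigl| d^2-2-2\sigma(K)\bigr|.
\]
First I would rewrite this absolute-value bound as the two-sided inequality
\[
2\sigma(K)\leq d^2\leq 2\sigma(K)+4,
\]
so that the whole problem reduces to a single question: for a prescribed value of $\sigma(K)$, which even integers $d$ have $d^2$ lying in the closed interval $[\,2\sigma(K),\,2\sigma(K)+4\,]$?

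The arithmetic fact that makes this restrictive is that an even $d$ forces $d^2$ to be a perfect square divisible by $4$, namely one of $0,4,16,36,\dots$. Consecutive terms of this list are spaced $4,12,20,\dots$ apart, so a window of length $4$ can contain two of them only at the very bottom (where $0$ and $4$ are exactly $4$ apart) and otherwise at most one. I would then substitute each signature value in turn. When $\sigma(K)\leq -4$ the upper endpoint $2\sigma(K)+4$ is negative while $d^2\geq 0$, so no even $d$ survives; when $\sigma(K)=4$ the window is $[8,12]$, which contains no even square; these two sub-cases give part~(1). For $\sigma(K)=-2$ the window $[-4,0]$ forces $d^2=0$; for $\sigma(K)=0$ the window $[0,4]$ is the exceptional one, catching both $d^2=0$ and $d^2=4$; for $\sigma(K)=2$ the window $[4,8]$ admits only $d^2=4$; and for $\sigma(K)\in\{6,8\}$ the windows $[12,16]$ and $[16,20]$ each admit only $d^2=16$. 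Reading off $|d|$ from each surviving $d^2$ yields parts~(2)--(5).

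To finish, I would appeal to the degree-reflection argument given just before Corollary~\ref{corollary kronheimer mrowka}: an obstruction to degree $d$ is automatically an obstruction to degree $-d$, because reversing the string orientation of $K$ sends $[D_K]$ to $-[D_K]$. This is what licenses stating every conclusion purely in terms of $|d|$. I do not expect any genuine obstacle here: the argument is a single substitution followed by a finite check of short integer intervals, and the only point requiring care is remembering that evenness of $d$ makes $d^2$ a multiple of $4$ --- precisely the feature that keeps each length-$4$ window from admitting more than one or two candidate values.
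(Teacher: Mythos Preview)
Your proposal is correct and follows exactly the approach the paper takes: its entire proof is the single sentence ``Substitute $g(S_K)=0$ into Theorem~\ref{theorem gilmer viro} and consider the case for each signature,'' which is precisely what you have spelled out in detail. One small note: your arithmetic for $\sigma(K)=0$ correctly gives $|d|\in\{0,2\}$, so the $\{0,1\}$ appearing in part~(3) of the statement is evidently a typo (indeed $d$ is assumed even), and the degree-reflection appeal at the end is harmless but unnecessary since the inequality already depends only on $d^2$.
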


\begin{proof}
Substitute $g(S_K)=0$ into Theorem \ref{theorem gilmer viro} and consider the case for each signature.
\end{proof}

\subsubsection{Knots with odd slice degree obstruct their mirrors from having odd slice degrees.}

%
%


\begin{theorem}[Lawson \cite{Lawson1992}]
\label{theorem lawson}
Let $S$ be a characteristic embedded 2-sphere in $2\mathbb{C}P^2\# \overline{\mathbb{C}P^2}$ (respectively $\mathbb{C}P^2\# 2\overline{\mathbb{C}P^2}$). Then $[S]\cdot[S]=1$ (respectively $[S]\cdot [S]=-1$).
\end{theorem}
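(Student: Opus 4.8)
The plan is to combine the algebra of characteristic vectors with a Rokhlin-type congruence valid for embedded spheres, and then to pin the value down with a minimal-genus lower bound. I would treat $X=2\mathbb{C}P^2\#\overline{\mathbb{C}P^2}$ first and obtain the parenthetical case for free: since $\mathbb{C}P^2\#2\overline{\mathbb{C}P^2}=\overline{X}$ is the orientation reversal of $X$, reversing orientation negates the intersection form, so a characteristic sphere there has self-intersection $-1$ exactly when the corresponding sphere in $X$ has self-intersection $+1$. Thus it suffices to prove $[S]^2=1$ in $X$.

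First I would fix the diagonal basis $e_1,e_2,e_3$ of $H_2(X;\mathbb{Z})$ with $e_1^2=e_2^2=1$ and $e_3^2=-1$, and write $[S]=ae_1+be_2+ce_3$. The defining condition $[S]\cdot x\equiv x\cdot x \pmod 2$ for all $x$ forces $a,b,c$ to be odd, whence $[S]^2=a^2+b^2-c^2\equiv 1 \pmod 8$; the value $1$ is consistent and is realized by tubing together a line $\mathbb{C}P^1$ in each of the three summands. Next, because $S$ is an embedded sphere, $H_1(S;\mathbb{Z}/2)=0$, so the Arf term in the Guillou--Marin (Freedman--Kirby) congruence vanishes and it collapses to the Kervaire--Milnor relation $[S]^2\equiv\sigma(X)\pmod{16}$. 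As $\sigma(X)=1$, this upgrades the constraint to $[S]^2\equiv 1\pmod{16}$, which already eliminates every characteristic square lying in the residue class $9\pmod{16}$, such as $9$, $25$, and $-7$.

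The decisive step is to exclude the candidate values $\dots,-31,-15,17,33,\dots$ that remain after Step two but differ from $1$. For these I would produce a minimal-genus lower bound showing that a class of large $|[S]^2|$ cannot be carried by a sphere. Concretely, I would excise a tubular neighborhood $\nu(S)$ to obtain the complement $W=X\setminus\nu(S)$, which is spin because $S$ is characteristic, and whose boundary is the lens space $\partial\nu(S)$ (the circle bundle over $S^2$ of Euler number $[S]^2$). Capping this lens-space boundary with a suitable definite filling yields a closed $4$-manifold to which Donaldson's diagonalization theorem applies; the constraint diagonalization places on the resulting intersection form then forces $|[S]^2|=1$. (In the spirit of Theorem \ref{theorem kronheimer mrowka}, the modern adjunction inequality would give the positive-square bound more cheaply, with the orientation-reversed picture handling the negative side.)

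I expect this last step to be the main obstacle. The mod-$16$ congruence is only necessary, never sufficient: the class $3e_1+3e_2+e_3$ has square $17\equiv 1\pmod{16}$ yet must still be ruled out as a sphere, so a genuine gauge-theoretic genus bound is unavoidable. The delicacy lies in choosing the right definite filling of $\partial\nu(S)$, in tracking how $[S]$, the spin structure, and the signature behave under the capping, and in reading a contradiction off Donaldson's theorem whenever $|[S]^2|\neq 1$. Note also that the later Seiberg--Witten adjunction inequality cannot be applied to $X$ directly, since $b_2^+(X)=2$ and $X$ splits as a connected sum with vanishing invariants; the gauge theory must therefore be run on the auxiliary closed manifold assembled from $W$ rather than on $X$ itself.
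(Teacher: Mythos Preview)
The paper does not prove this theorem; it is quoted as a result of Lawson \cite{Lawson1992} and used as a black box to derive Corollary~\ref{corollary lawson}. There is therefore no proof in the paper against which to compare your attempt.

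On the substance of your sketch: the reduction to $X=2\mathbb{C}P^2\#\overline{\mathbb{C}P^2}$ by orientation reversal is fine, the computation that characteristic forces odd coefficients and hence $[S]^2\equiv 1\pmod 8$ is correct, and invoking the Kervaire--Milnor congruence $[S]^2\equiv\sigma(X)=1\pmod{16}$ for a smoothly embedded characteristic sphere is the right first filter. You also correctly identify that this congruence still leaves infinitely many values (e.g.\ $17$, $-15$, $33$, $-31$) to be excluded. But your ``decisive step'' is not actually carried out: you propose to cap the spin complement $W=X\setminus\nu(S)$ with some definite filling of its lens-space boundary and apply Donaldson's diagonalization theorem, yet you do not name the filling, compute the intersection form of the resulting closed manifold, or show how diagonalizability forces $|[S]^2|=1$. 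Since you yourself label this ``the main obstacle,'' what you have is a plan rather than a proof, and the gap is exactly the content of Lawson's theorem. The aside about the adjunction inequality does not rescue the argument either, for the reason you already note: $b_2^+(X)=2$ with $X$ a connected sum, so the Seiberg--Witten invariants of $X$ vanish and there is no adjunction bound available on $X$ itself.
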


It is worth noting that an embedded surface $S$ in $m\mathbb{C}P^2\# n\overline{\mathbb{C}P^2}$ represented by class $[S] = \Sigma_{i=1}^m d_i\gamma_i +\Sigma_{j=1}^n d_j'\overline{\gamma}_j\in H_2(m\mathbb{C}P^2\#n\overline{C}P^2;\mathbb{Z})$ where $\gamma_i\cdot\gamma_i=1$ for all $1\leq i\leq n$ and $\overline{\gamma}_j\cdot\overline{\gamma}_j=-1$ for all $1\leq j\leq n$ is characteristic if and only if $d_i$ and $d_j'$ are odd for all $1\leq i\leq m$ and $1\leq j\leq n$.


\begin{corollary}\label{corollary lawson}
Let $K$ be a knot that bounds a properly embedded disc $D_K$ in $\mathbb{C}P^2\setminus B^4$ with $[D_K]=d\gamma\in H_2(\mathbb{C}P^2\setminus B^4,\partial;\mathbb{Z})$.
\begin{enumerate}
	\item Let $d$ be of odd degree and $|d|\geq 3$. Then $mK$ does not bound a properly embedded 2-disc in $\mathbb{C}P^2\setminus B^4$ of odd degree.
	\item Let $|d|=1$. If $mK$ bounds a properly embedded disc $D_K'$ in $\mathbb{C}P^2\setminus B^4$ with $[D_K']=d'\gamma\in H_2(\mathbb{C}P^2\setminus B^4,\partial;\mathbb{Z})$ of odd degree $d'$, then $|d'|=1$. 
\end{enumerate}
\end{corollary}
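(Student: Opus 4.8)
The plan is to assume, toward a contradiction, that both discs exist and to splice them together into a single embedded sphere inside a connected sum of projective planes, where Theorem \ref{theorem lawson} can be applied. So suppose $K$ bounds a properly embedded disc $D_K$ in $\mathbb{C}P^2\setminus B^4$ with $[D_K]=d\gamma$, $d$ odd, and that $mK$ bounds a properly embedded disc $D_K'$ in a second copy of $\mathbb{C}P^2\setminus B^4$ with $[D_K']=d'\gamma$, $d'$ odd. First I would glue these two copies of $\mathbb{C}P^2\setminus B^4$ along their $S^3$ boundaries by an orientation-reversing diffeomorphism $\phi$; as in the capping argument of Lemma \ref{lemma kronheimer mrowka}, such a $\phi$ carries a knot to its mirror, so we may choose $\phi$ to send $\partial D_K'=mK$ to $\partial D_K=K$ (up to the irrelevant string orientation). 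The result is the closed manifold $\mathbb{C}P^2\#\mathbb{C}P^2=2\mathbb{C}P^2$, and the two discs glue along their common boundary circle into an embedded sphere $\Sigma=D_K\cup_\phi D_K'$. Writing $\gamma_1,\gamma_2$ for the two generators (each of square $+1$), we have $[\Sigma]=d\gamma_1\pm d'\gamma_2$, so $[\Sigma]\cdot[\Sigma]=d^2+d'^2$ regardless of the signs, which are immaterial since the string orientations are.

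To bring Theorem \ref{theorem lawson} to bear I must upgrade $\Sigma$ to a \emph{characteristic} sphere, and for this the coefficient in an $\overline{\mathbb{C}P^2}$-summand must be made odd. So next I would blow up once, passing to $2\mathbb{C}P^2\#\overline{\mathbb{C}P^2}$, and tube $\Sigma$ to the exceptional sphere $E$, whose class $\overline{\gamma}$ satisfies $\overline{\gamma}\cdot\overline{\gamma}=-1$. Since $E$ sits in the new summand it is disjoint from $\Sigma$, and the ambient connected sum $\Sigma'=\Sigma\# E$ is again an embedded sphere (genus $0+0=0$) with
\[ [\Sigma']=d\gamma_1\pm d'\gamma_2+\overline{\gamma}. \]
All three coefficients $d$, $\pm d'$, $1$ are odd, so by the remark following Theorem \ref{theorem lawson} the sphere $\Sigma'$ is characteristic in $2\mathbb{C}P^2\#\overline{\mathbb{C}P^2}$. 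Theorem \ref{theorem lawson} then forces $[\Sigma']\cdot[\Sigma']=1$, and computing the left side gives $d^2+d'^2-1=1$, that is $d^2+d'^2=2$.

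Since $d$ and $d'$ are odd we have $d^2,d'^2\geq 1$, so $d^2+d'^2=2$ is possible only when $|d|=|d'|=1$. Part (1) follows immediately: if $|d|\geq 3$ then $d^2\geq 9>2$, a contradiction, so no odd-degree disc for $mK$ can exist. Part (2) follows as well: if $|d|=1$ and $mK$ bounds an odd-degree disc, then $d^2+d'^2=2$ forces $d'^2=1$, i.e.\ $|d'|=1$. I expect the main obstacle to be the orientation bookkeeping in the gluing step --- verifying that the orientation-reversing diffeomorphism matches $K$ with $mK$ so that the discs close up into a genuine sphere, and confirming that the resulting manifold is $2\mathbb{C}P^2$ with the stated homology class --- together with the (standard but essential) observation that one must blow up and tube in the exceptional sphere to make the class characteristic; without this step $\Sigma$ lives in $2\mathbb{C}P^2$, to which Theorem \ref{theorem lawson} does not apply.
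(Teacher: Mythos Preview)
Your proof is correct and follows essentially the same approach as the paper: both construct a characteristic sphere of class $d\gamma_1\pm d'\gamma_2+\overline{\gamma}$ in $2\mathbb{C}P^2\#\overline{\mathbb{C}P^2}$ and apply Theorem~\ref{theorem lawson} to get $d^2+d'^2-1=1$. The only cosmetic difference is the order of operations---the paper first absorbs a copy of the $(-1)$-core disc into $D_K'$ (so $mK$ bounds a disc in $\mathbb{C}P^2\#\overline{\mathbb{C}P^2}\setminus B^4$) and then glues, whereas you first glue to form a sphere in $2\mathbb{C}P^2$ and then blow up and tube in the exceptional sphere; the resulting sphere and conclusion are identical.
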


\begin{proof}
(1) Suppose for contradiction that $mK$ bounds a properly embedded 2-disc $D_K'$ in $\mathbb{C}P^2\setminus B^4$ with $[D_K']=d'\gamma_2\in H_2(\mathbb{C}P^2\setminus B^4,\partial;\mathbb{Z})$ and $d'$ odd. By taking a single parallel copy $\widetilde{D_K}$ of the core disc in the $(-1)$-twisted 2-handle of $\mathbb{C}P^2\#\overline{\mathbb{C}P^2}\setminus B^4$ we may perform surgery to absorb $\widetilde{D_K}$ into $D_K'$. This gives us that $mK$ bounds a properly embedded 2-disc $D_K''$ in $\mathbb{C}P^2\#\overline{\mathbb{C}P^2}\setminus B^4$ with $[D_K'']=d'\gamma_2+\overline{\gamma}_3\in H_2(\mathbb{C}P^2\#\overline{\mathbb{C}P^2}\setminus B^4)$. Gluing $\mathbb{C}P^2\setminus B^4$ to $\mathbb{C}P^2\#\overline{\mathbb{C}P^2}\setminus B^4$ via an orientation reversing diffeomorphism,  we have an embedded characteristic 2- sphere $S=D_K\cup_K D_K''$ in $2\mathbb{C}P^2\#\overline{\mathbb{C}P^2}$ with $[S]=d\gamma_1+d'\gamma_2+\overline{\gamma}_3\in H_2(2\mathbb{C}P^2\#\overline{\mathbb{C}P^2};\mathbb{Z})$. By Theorem \ref{theorem lawson},
\begin{align*}
1			&= [D_K]\cdot[D_K] \\
			&= d^2+(d')^2-1 \\
			&\geq 9+(d')^2-1 \\
-7 			&\geq (d')^2
\end{align*}
a contradiction.

(2) Following the proof of (1) above, we have
\begin{align*}
1			&= [D_K]\cdot[D_K] \\
			&= 1+d^2-1 \\
1 			&= d^2
\end{align*}
as desired.
\end{proof}

\subsubsection{The Arf invariant obstructs half the remaining odd slice degrees.}

%
%

\begin{definition}[Robertello \cite{Robertello1965}]\label{definition robertello arf}
Let $f:S^2\rightarrow M^4$ be a combinatorial embedding of the 2-sphere $S^2$ into a closed, oriented, simply connected, differentiable 4-manifold $M^4$. Let $f$ be differentiable and regular except at one point $x_0\in S^2$, and suppose there exists a differentiably embedded 4-disk $D^4$ such that $D^4\subset M^4$, $f(x_0)$ (the singularity of $f$) is at the center of $D^4$, and $f(S^2)\cap D^4$ is a knot in $S^3=\partial D^4$. Let $\xi = [f(S^2)]\in H_2(M^4,\mathbb{Z})$ be characteristic. Then
\[Arf(K)\equiv \frac{\xi\cdot \xi - \sigma(M^4)}{8}\mod 2.\]
\end{definition}

\begin{lemma}
Let $K$ be a knot with Arf invariant Arf$(K)$ and let $D_K$ be a properly embedded disc in $\mathbb{C}P^2\setminus B^4$ with $\partial D_K = K$ and odd homological degree $d$. Then
\[\text{Arf}(K)\equiv\frac{d^2-1}{8}\mod 2.\]
\end{lemma}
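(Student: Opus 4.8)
The plan is to apply Robertello's Arf formula (Definition \ref{definition robertello arf}) to a characteristic sphere built by capping off the disc $D_K$, exactly as in the proofs of Lemma \ref{lemma kronheimer mrowka} and Corollary \ref{corollary lawson}(1). First I would cap off $D_K$ to produce a closed characteristic sphere in a closed 4-manifold. Since $K$ bounds a surface of genus $g_4(K)$ in $D^4$, and in particular $mK$ bounds such a surface, I would glue a copy of $D^4$ containing a Seifert-type surface for $mK$ onto $\mathbb{C}P^2\setminus B^4$ along the boundary $S^3$, via an orientation-reversing diffeomorphism identifying $K$ with $mK$. This yields a closed, oriented, simply connected 4-manifold $M^4=\mathbb{C}P^2$ containing a closed surface $S=D_K\cup_K S_{mK}$ with $[S]=d\gamma\in H_2(\mathbb{C}P^2;\mathbb{Z})$. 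The crucial point is that $d$ is \emph{odd}, so $[S]=d\gamma$ is characteristic in $\mathbb{C}P^2$ (the characteristic condition for $\mathbb{C}P^2$ with intersection form $\langle 1\rangle$ is precisely that the coefficient be odd).

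Next I would arrange the embedding to match Robertello's hypotheses. Robertello's definition requires the embedded sphere to have a single singular point at which the intersection with a small $D^4$ is exactly the knot $K$. Rather than genuinely smoothing $D_K\cup_K S_{mK}$ into a sphere with one singularity, the cleaner route is to observe that Robertello's invariant depends only on the characteristic homology class and the ambient manifold, and that the disc $D_K$ already exhibits $K$ as the knot appearing in the boundary sphere of the excised $B^4$. Concretely, I would take $M^4=\mathbb{C}P^2$, let $\xi=[S]=d\gamma$ be the characteristic class, and identify the standard $B^4$ we removed from $\mathbb{C}P^2$ as the distinguished disc $D^4$ in Definition \ref{definition robertello arf}, so that $S\cap D^4$ is the knot $K$ in $S^3=\partial B^4$.

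With the setup in place the formula is a direct substitution. Here $\xi\cdot\xi = (d\gamma)\cdot(d\gamma)=d^2(\gamma\cdot\gamma)=d^2$, since $\gamma$ is the positive generator with self-intersection $+1$, and $\sigma(\mathbb{C}P^2)=1$. Robertello's formula then gives
\[
\text{Arf}(K)\equiv\frac{\xi\cdot\xi-\sigma(M^4)}{8}=\frac{d^2-1}{8}\pmod 2,
\]
which is exactly the claimed congruence.

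The main obstacle will be verifying that the hypotheses of Definition \ref{definition robertello arf} are genuinely met, rather than the arithmetic. In particular I must confirm that capping $D_K$ with a surface for $mK$ produces an embedded sphere (or, more honestly, a surface with a single isolated singularity whose link is $K$) representing the characteristic class $d\gamma$, and that Robertello's invariant is insensitive to the genus of the capping surface $S_{mK}$ away from the singularity. This is the same subtlety that arises in Lemma \ref{lemma kronheimer mrowka}, where the genus of the capping piece contributes only to the genus of $S$ and not to its homology class; here the added wrinkle is that Robertello's theorem is stated for singular embeddings of the \emph{2-sphere}, so I would either appeal to the known invariance of the Arf-invariant formula under the choice of capping surface of fixed boundary, or arrange $S_{mK}$ to be a disc in the ambient topological sense so that $S$ is literally a sphere with one singular point.
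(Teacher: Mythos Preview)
Your arithmetic and homological computations are correct, and you correctly identify the characteristic condition. However, you have made the capping step harder than necessary, and you recognize this yourself in your final paragraph without quite landing on the fix.

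The paper does \emph{not} cap $D_K$ with a smooth Seifert surface $S_{mK}$ for $mK$; doing so produces a closed surface of genus $g_4(K)$, which does not match Robertello's hypothesis of a (singularly) embedded $S^2$. Instead, the paper caps $\mathbb{C}P^2\setminus B^4$ with a $D^4$ and caps $D_K$ with the \emph{cone} $C(K)$ on $K$ inside that $D^4$. The cone is a topological disc with a single non-smooth point at the cone vertex, so $S=D_K\cup_K C(K)$ is genuinely a combinatorially embedded $2$-sphere in $\mathbb{C}P^2$ with exactly one singular point, and the link of that singularity in the surrounding $D^4$ is precisely $K$. This is exactly the situation in Definition~\ref{definition robertello arf}, with $\xi\cdot\xi=d^2$ and $\sigma(\mathbb{C}P^2)=1$, so the formula follows by direct substitution.

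Your alternative of capping with a smooth surface would work only after invoking a genus-corrected version of the formula (the Kervaire--Milnor/Rokhlin generalization), or after separately arguing the invariance you allude to; the cone construction sidesteps this entirely. Interestingly, your last sentence (``arrange $S_{mK}$ to be a disc in the ambient topological sense so that $S$ is literally a sphere with one singular point'') is essentially a description of the cone---that is the paper's construction.
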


\begin{proof}
Suppose that $K$ is a knot  in  $\partial (\mathbb{C}P^2\setminus B^4)\cong S^3$ and $K$ bounds a properly embedded disc $D_K$ with degree $d$. We may cap off $\mathbb{C}P^2\setminus B^4$ with a $D^4$ and $D_K$ with the cone $C(K)$ of $K$, as shown in Figure \ref{figure robertello cone}. This gives us an embedding of $S^2$ into $\mathbb{C}P^2$ with only one singularity point, the cone point of $C(X)$. Call the image of this embedding $S$, which is the union over $K$ of $D_K$ and $C(K)$. Since the degree $d$ of $[S]\in H_2(\mathbb{C}P^2;\mathbb{Z})$ is determined completely by the number of times it traverses the 2-handle $h^2$ of $\mathbb{C}P^2$, the degree of $[S]$ is equal to the degree $d$ of $[D_K]\in H_2(\mathbb{C}P^2\setminus B^4)$. Since $d$ is odd, it means that $[D_K]$ is characteristic. By substitution into definition \ref{definition robertello arf}, we have the desired result.
\end{proof}


\begin{figure}[h]
\includegraphics[width=0.55\textwidth]{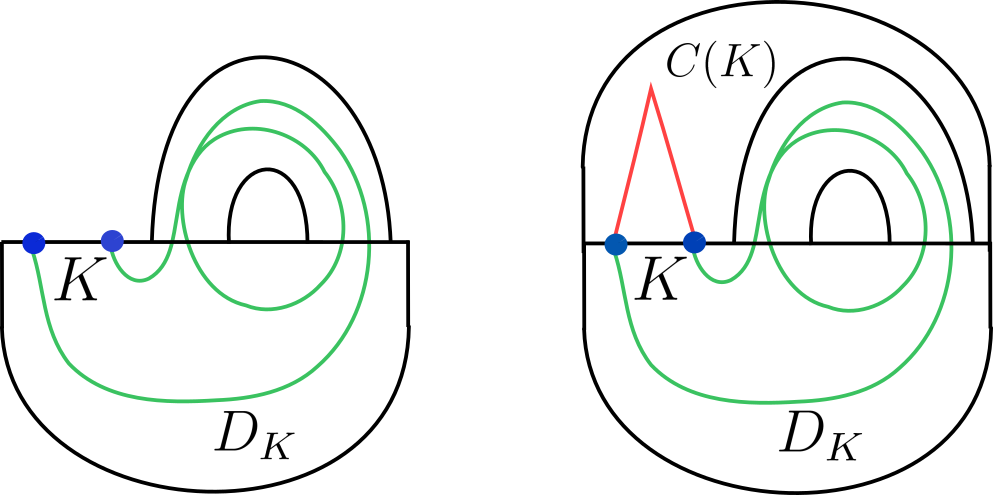} \\
\caption{(Left) A knot $K$ in $S^3=\partial \mathbb{C}P^2\setminus B^4$ bounding a properly embedded disc $D_K$. (Right) $\mathbb{C}P^2\setminus B^4$ capped off with a $D^4$ and $D_K$ capped off with the cone $C(K)$.
}\label{figure robertello cone}
\end{figure}

\begin{corollary}\label{corollary robertello}
Let $K$ be a knot and $D_K$ a properly embedded disc in $\mathbb{C}P^2\setminus B^4$ with characteristic (odd) degree $d$.
\begin{enumerate}
	\item If Arf$(K)=0$, then $|d|$ is not 3.
	\item If Arf$(K)=1$, then $|d|$ is not 1.
\end{enumerate}
\end{corollary}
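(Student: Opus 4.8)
The plan is to apply the preceding Lemma directly and then evaluate the resulting congruence at the two candidate values of $|d|$. The Lemma pins down the Arf invariant modulo $2$ whenever $K$ bounds a properly embedded disc $D_K$ of odd degree $d$, via $\mathrm{Arf}(K)\equiv\frac{d^2-1}{8}\pmod 2$. Since both parts of the statement concern a single fixed odd slice degree, the only remaining work is arithmetic: substitute the putative value of $|d|$ into the right-hand side, compute its residue mod $2$, and observe that it contradicts the assumed value of the Arf invariant. I would phrase each part as a proof by contradiction.

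First I would handle (1). Assume toward a contradiction that $K$ has $\mathrm{Arf}(K)=0$ yet bounds a disc $D_K$ with $|d|=3$. Then $d^2=9$, so $\frac{d^2-1}{8}=1$, and the Lemma forces $\mathrm{Arf}(K)\equiv 1\pmod 2$, contradicting $\mathrm{Arf}(K)=0$. Hence no such disc of degree $\pm 3$ can exist, which is exactly the assertion that $|d|$ is not $3$.

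Next, for (2), assume $\mathrm{Arf}(K)=1$ with $|d|=1$. Then $d^2=1$ and $\frac{d^2-1}{8}=0$, so the Lemma yields $\mathrm{Arf}(K)\equiv 0\pmod 2$, contradicting $\mathrm{Arf}(K)=1$. Thus $|d|$ cannot be $1$, completing the argument.

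The argument carries essentially no obstacle beyond the Lemma it invokes; all of the topological content, namely coning off $D_K$ to produce a singular embedded $2$-sphere in $\mathbb{C}P^2$ and applying Robertello's formula, has already been absorbed into the congruence $\mathrm{Arf}(K)\equiv\frac{d^2-1}{8}\pmod 2$. Two small remarks are worth recording in the write-up: we use degrees in absolute value throughout, which is harmless since the formula depends only on $d^2$; and the standing hypothesis that $d$ is odd (equivalently that $[D_K]$ is characteristic) is precisely what licenses the application of Robertello's definition in the Lemma, so no further case analysis on the parity of $d$ is needed.
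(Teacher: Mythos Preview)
Your proof is correct and is exactly the intended argument: the paper states this corollary without proof, as it follows immediately from the preceding lemma by the arithmetic you spell out.
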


\subsection{Proofs of Theorems \ref{mainresult2} and \ref{mainresult3}}

%
%

With Corollaries \ref{corollary ozvath szabo}, \ref{corollary kronheimer mrowka}, \ref{corollary gilmer viro}, \ref{corollary lawson}, and \ref{corollary robertello}, we may now easily prove Theorems \ref{mainresult2} and \ref{mainresult3}.

\begin{proof}[Proof of Theorem \ref{mainresult2}]
Let $K$ be an alternating knot with $\sigma(K)=4$, $g_4(K)\leq 2$, and Arf$(K)=0$. Suppose for contradiction that $K$ bounds a properly embedded disc $D_K$ in $\mathbb{C}P^2\setminus B^4$ with slice degree $d$. By Corollary \ref{corollary kronheimer mrowka},  $|d|\in\{0,1,2,3\}$. By Corollary \ref{corollary ozvath szabo}, $|d|$ is not 0, 1, or 2. By Corollary \ref{corollary robertello}, $|d|$ is not 3. Since each properly embedded surface in $\mathbb{C}P^2\setminus B^4$ has some homological degree, we have reached a contradiction. It follows that such a disc $D_K$ cannot exist.
\end{proof}

\begin{proof}[Proof of Theorem \ref{mainresult3}]
Let $K$ be an alternating knot with $|\sigma(K)|=4$. Suppose for contradiction that both $K$ and $mK$ bound properly embedded discs $D_K$ and $D_K'$ in $\mathbb{C}P^2\setminus B^4$ respectively with slice degrees $d$ and $d'$. Without loss of generality, suppose that $\sigma(K)=4$ and $\sigma(-K)=-4$. By Corollary \ref{corollary gilmer viro} we know that $d$ and $d'$ are not even. Hence both $d$ and $d'$ are odd. By Corollary \ref{corollary ozvath szabo}, we know that $d$ cannot be 1. Thus, $d$ is an odd number greater than or equal to 3. It follows by Corollary \ref{corollary lawson} that $d'$ is not odd, a contradiction.
\end{proof}

\section{Additional computations}\label{section computation}

In this section we provide computations of the $\mathbb{C}P^2$-genus for knots that were beyond the intended scope of this work. Namely, we compute the $\mathbb{C}P^2$-genus for a finite set of prime knots of 9- and 10-crossings and show an infinite family $\{K_n\}$ of knots such that $K_n$ and $mK_n$ have differing $\mathbb{C}P^2$-genus for each $n\in\mathbb{N}$.

\subsection{Genera computed using Theorem \ref{mainresult1}}

One can show that a knot $K$ is slice in $\mathbb{C}P^2$ by showing a coherent band surgery taking $mK$ to one of the links listed in Theorem \ref{mainresult1} or vice versa (taking one of the links to $mK$).. We have done this for the following prime knots of $9-$ and $10-$crossings
\[m9_4,\hspace{.2cm} 9_5,\hspace{.2cm} m9_{13},\hspace{.2cm} m9_{15},\hspace{.2cm} 9_{29},\hspace{.2cm} m9_{35},\hspace{.2cm} m10_{11},\hspace{.2cm} m10_{12},\hspace{.2cm} 10_{37}.\]

The required coherent band surgeries are provided in Appendix \ref{appendix surgeries knots to links}. In addition to the finite list of knots above, we have computed the $\mathbb{C}P^2$-genus of an infinite family $\{K_n\}$ of knots along with their mirrors. Consider the family $\{K_n\}_{n\geq 1}$ shown in Figure \ref{figure K_n and band move}. In the same figure, we see the coherent band surgery required to take each $K_n$ to the link L4a1$\{1\}$. Thus, by Theorem \ref{mainresult1}, $mK_n$ is slice in $\mathbb{C}P^2$ for each $n\in\mathbb{N}$.


\begin{figure}[h]
\includegraphics[width=0.25\textwidth]{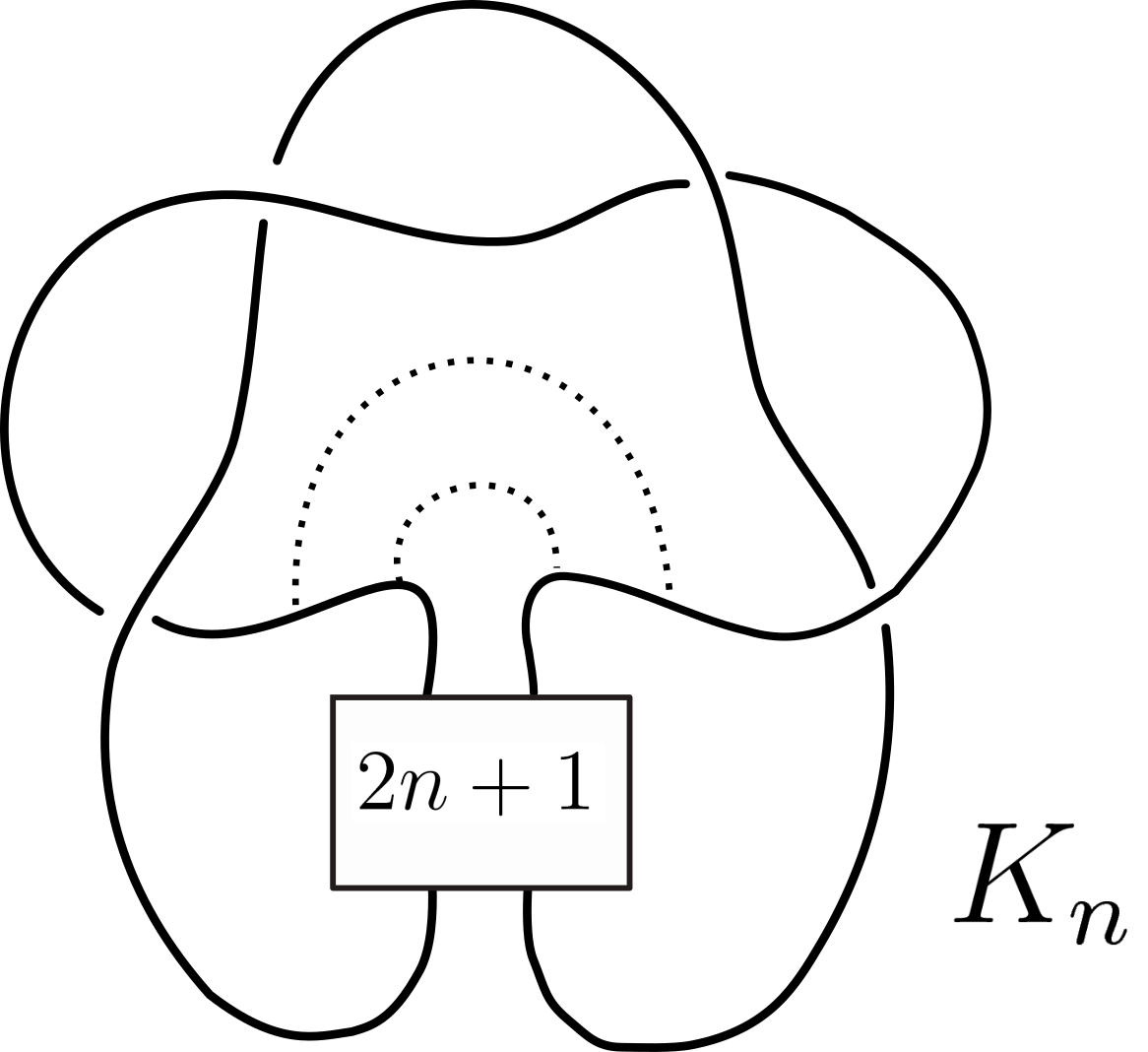}
\caption{Coherent band surgery between $K_n$ and L4a1$\{1\}$.}\label{figure K_n and band move}
\end{figure}

\subsection{Genera computed using Theorem \ref{mainresult2}}

From the preceeding subsection we found the knots $m9_4$ and $m9_{13}$ to be slice in $\mathbb{C}P^2$. It follows from Theorem \ref{mainresult2} that their mirrors
\[9_4,\hspace{.2cm}, 9_{13},\hspace{.2cm}\]
are not slice in $\mathbb{C}P^2$. The author has constructed an explicit genus 1 surface in $\mathbb{C}P^2\setminus B^4$ bounding each knot. It follows that the $\mathbb{C}P^2$-genus of both knots is 1.

We may also apply Theorem \ref{mainresult2} to the infinite family $\{K_n\}$. By inspection, one may check that for any $n\geq 1$, the unknotting number of $K_n$ is 2. That is, no single crossing change will turn $K_n$ into the unknot, but one can always find two that will. Thus $u(K_n)=2$ and by Lemma \ref{lemma unknotting upper bound}, we have that $g_{\mathbb{C}P^2}(K_n)\leq 1$. To determine the explicit $\mathbb{C}P^2$-genus of $K_n$ it is left to show that $g_{\mathbb{C}P^2}(K_n)\geq 1$. To use Theorem \ref{mainresult2}, we need to show that each $K_n$ is alternating and $|\sigma(K_n)|=4$. 

That $K_n$ is alternating may be confirmed by an informal combination of inspection and induction. Starting with $n=1$, we see that $K_1$ (which is the knot $7_3$) is alternating. Moving from $K_n$ to $K_{n+1}$ we see that two crossings are added in such a way that it maintains the alternating nature. 

In order to compute the signature of $K_n$, we use the method of \cite{livnai}. That is, we compute a Seifert matrix $A$ for $K_n$ and determine the signature of $K$ to be the the signature of the symmetric matrix $A+A^T$.


\begin{figure}[h]
\includegraphics[width=0.6\textwidth]{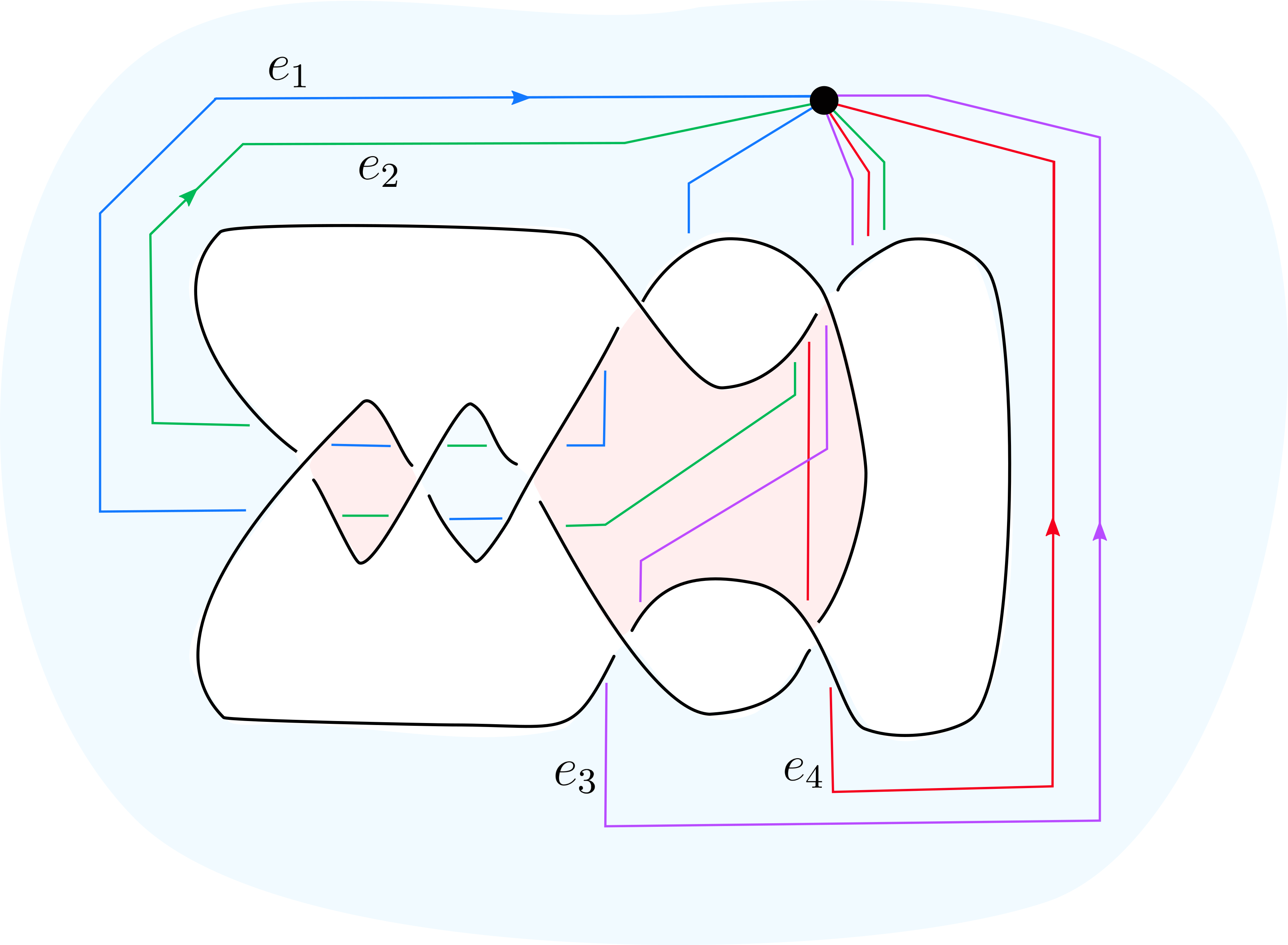}
\caption{The Seifert surface $F_1$ and a choice of generators $e_1,...,e_4$ of $H_1(F_1,\partial F_1;\mathbb{Z})$.}\label{seifert7_3}
\end{figure}

For simplicity we start with $n=1$. For reference $K_1$ is the positive knot $7_3$. We choose the Seifert surface $F_1$ and generators $e_1, e_2, e_3, e_4$ for $H_1(F_1,\partial F_1;\mathbb{Z})$ as shown in Figure \ref{seifert7_3}. The Seifert matrix associated to $F_1$ is

\[
A_1=
  \begin{bmatrix}
    -2 & -2 & 0 & 0 \\
    -1 & -2 & 0 & 0 \\
    0 & 0 & -1 & 0 \\
    0 & 0 & -1 & -1 
  \end{bmatrix}
\]
\\

The matrix $A_1+A_1^T$ is Hermitian, so one can easily check by Sylvester's criteria that $A_1+A_1^T$ is negative-definite and therefore has signature $-4$. It follows from \cite{livnai} that $\sigma(K_1)=-4$. Now let $n$ be arbitrary. Using the same style of Seifert surface, calling it $F_n$, as in Figure \ref{seifert7_3}, and same style of generators $e_1,...,e_4$, then the corresponding Seifert matrix to $F_n$ will be

\[
A_n=
  \begin{bmatrix}
    -1-n & -1-n & 0 & 0 \\
    -n & -1-n & 0 & 0 \\
    0 & 0 & -1 & 0 \\
    0 & 0 & -1 & -1 
  \end{bmatrix}
\]
\\

Again, by checking Sylvester's criteria, we find that $A_n+A_n^T$ is negative definite and hence $\sigma (K_n)=-4$.

\subsection{Genera computed using Theorem \ref{mainresult3}}

A search of Knotinfo \cite{knotinfo} shows that there are 126 alternating prime knots up to 12-crossings with signature 4, smooth four genus less than 3, Arf invariant 0, and unknotting number 2. By Theorem \ref{mainresult3} and Lemma \ref{lemma unknotting upper bound}, these knots all have $\mathbb{C}P^2$-genus equal to 1. Beyond the prime knots up to 8-crossings, we have not listed these since they can be easily identified.

\section{Explanation of appendices}

\subsection{Appendix \ref{appendix surgeries for MR1}}

Each figure shows a link and a coherent band surgery. The link is either (a) a $T(n,n)$ torus link or (b) the result of performing a coherent band surgery on two oppositely-oriented components of a $T(n,n)$ link to obtain an extra unlinked component. The band surgery shown in the diagram is that which will give the resulting 2-component link listed in the figure caption. These correspond to the cobordism $C_{L,a}$ described in the proof of Theorem \ref{mainresult1}.

\subsection{Appendix \ref{appendix surgeries knots to links}}

These diagrams show the coherent band surgeries required  for the knots listed in the introduction to satisfy Theorem \ref{mainresult1}.

\bibliographystyle{alpha}
\bibliography{test}

%
%

\appendix
\captionsetup[figure]{labelformat=empty}

\pagebreak
\section{Coherent band surgeries required for Theorem \ref{mainresult1}}\label{appendix surgeries for MR1}

\begin{figure}[h!]
    \centering
    \begin{minipage}{0.33\textwidth}
        \centering
        \includegraphics[width=0.6\textwidth]{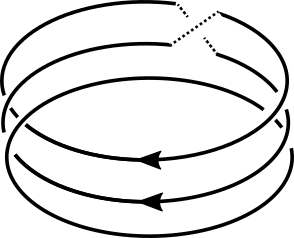} 
        \caption{L4a1$\{0\}\{1\}$ (Degree $ 1,3$)}
    \end{minipage}\hfill  
    \begin{minipage}{0.33\textwidth}
        \centering
        \includegraphics[width=0.6\textwidth]{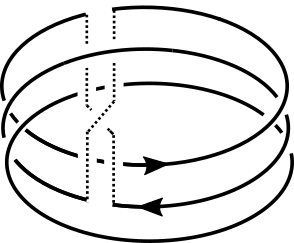} 
        \caption{$m$L7a4$\{0\}\{1\}$ (Degree $ 1$)}
    \end{minipage}
    \begin{minipage}{0.33\textwidth}
        \centering
        \includegraphics[width=0.6\textwidth]{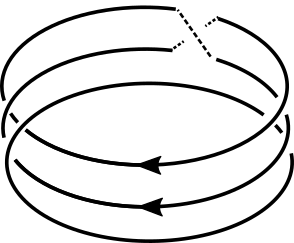} 
        \caption{$m$L7n1$\{0\}\{1\}$ (Degree $ 1, 3$)}
    \end{minipage}
    \vspace{0.75cm}
\end{figure}

\begin{figure}[h!]
    \centering
    \begin{minipage}{0.5\textwidth}
        \centering
        \includegraphics[width=0.9\textwidth]{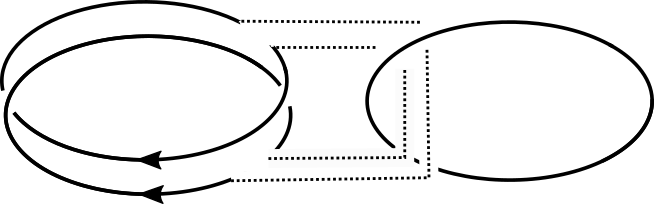} 
        \caption{$m$L5a1$\{0\}$ (Degree $2$)}
    \end{minipage}\hfill  
    \begin{minipage}{0.5\textwidth}
        \centering
        \includegraphics[width=0.9\textwidth]{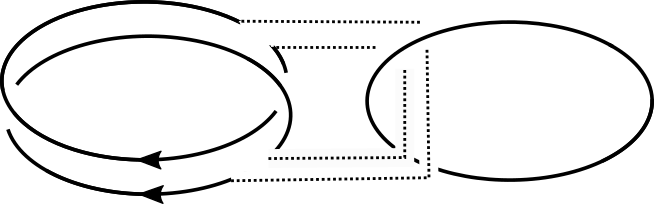} 
        \caption{L5a1$\{0\}$ (Degree $0$)}
    \end{minipage}
    \vspace{0.75cm}
      
\end{figure}

\begin{figure}[h!]
    \centering
    \begin{minipage}{0.5\textwidth}
        \centering
        \includegraphics[width=0.9\textwidth]{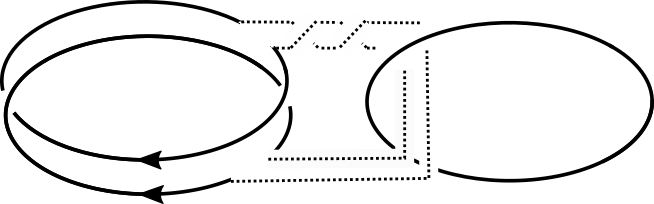} 
        \caption{$m$L7n2$\{0\}\{1\}$ (Degree $2$)}		
    \end{minipage}\hfill  
    \begin{minipage}{0.5\textwidth}
        \centering
        \includegraphics[width=0.9\textwidth]{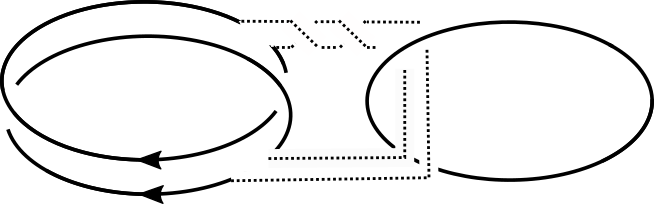} 
        \caption{L7n2$\{0\}\{1\}$ (Degree $0$)}
    \end{minipage}
    \vspace{0.75cm}  

\end{figure}

\begin{figure}[h!]
    \centering
    \begin{minipage}{0.66\textwidth}
        \centering
        \includegraphics[width=0.7\textwidth]{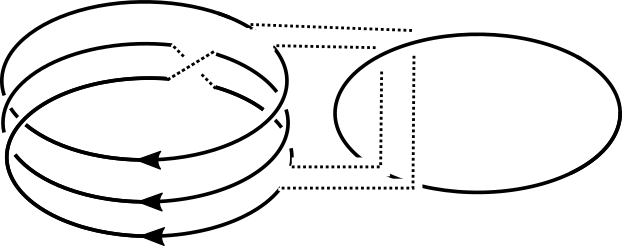} 
        \caption{L7a3$\{0\}\{1\}$ (Degree $ 3$)}
    \end{minipage}\hfill  
    \begin{minipage}{0.33\textwidth}
        \centering
        \includegraphics[width=0.6\textwidth]{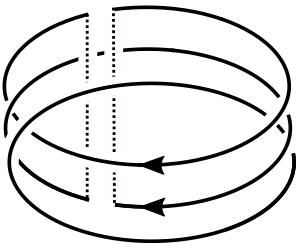} 
        \caption{$3_1\#m$L4a1$\{0\}\{1\}$ (Degree $ 1, 3$)}
    \end{minipage}
    \vspace{0.75cm}  

\end{figure}

\pagebreak
\section{Coherent band surgeries from specific knots to links in Theorem \ref{mainresult1}}\label{appendix surgeries knots to links}

\begin{figure}[h!]
    \centering
    \begin{minipage}{0.33\textwidth}
        \centering
        \includegraphics[width=0.75\textwidth]{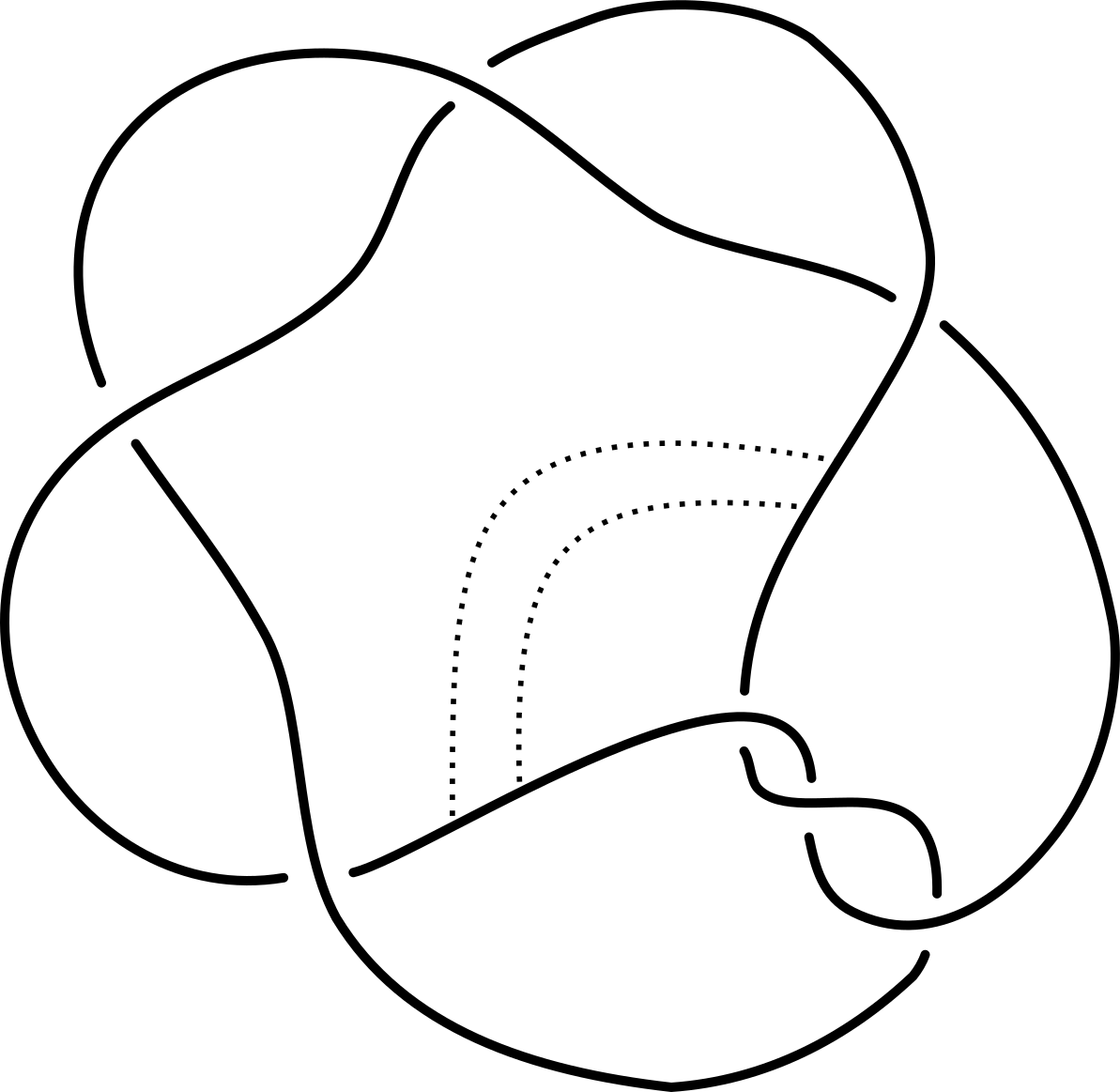} 
        \caption{$7_3\longrightarrow$ L4a1$\{1\}$}
    \end{minipage}\hfill  
    \begin{minipage}{0.33\textwidth}
        \centering
        \includegraphics[width=0.6\textwidth]{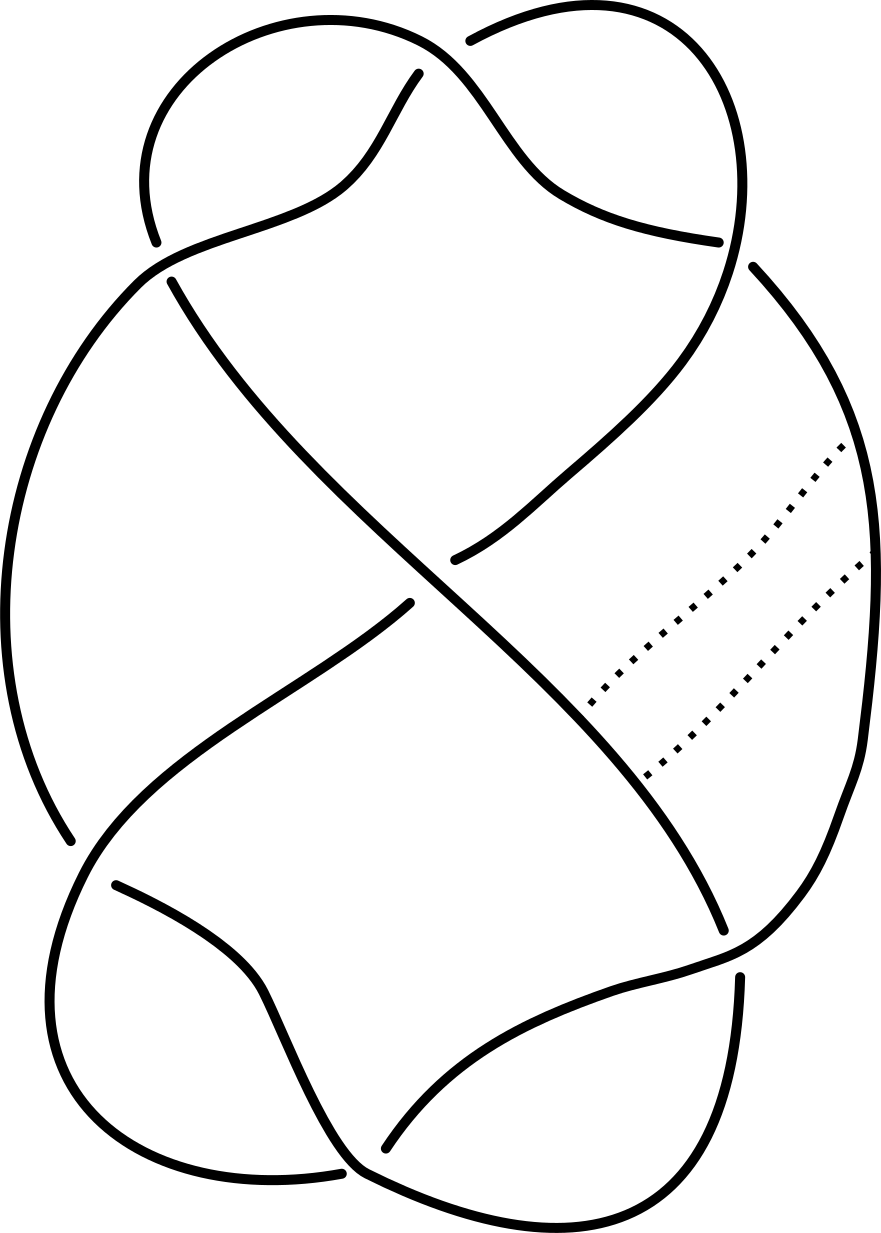} 
        \caption{$m7_4\longrightarrow$ L4a1$\{1\}$}
    \end{minipage}
    \begin{minipage}{0.33\textwidth}
        \centering
        \includegraphics[width=0.9\textwidth]{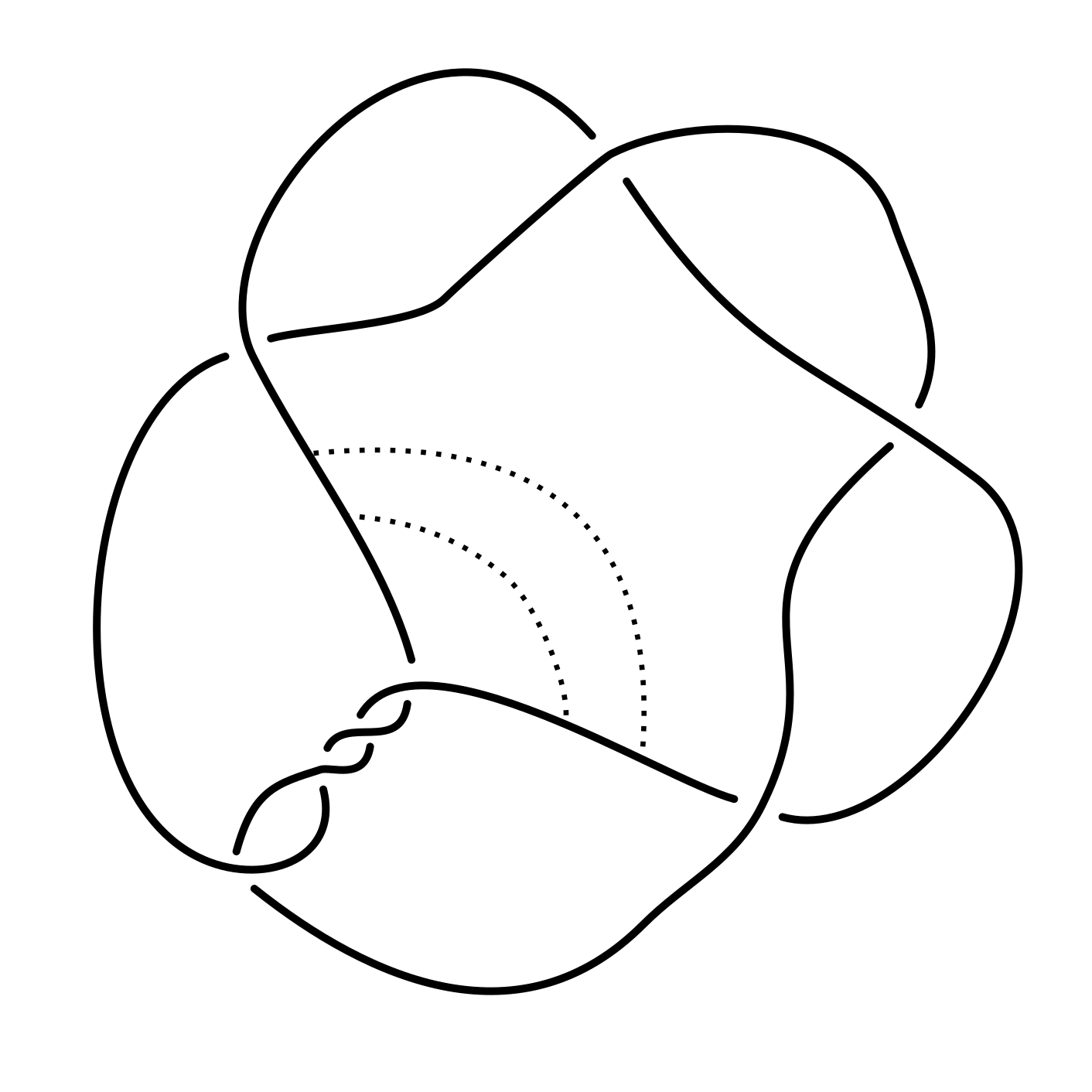} 
        \caption{$m8_3\longrightarrow$ L4a1$\{0\}$}
    \end{minipage}
    \vspace{0.75cm}

    \centering
    \begin{minipage}{0.33\textwidth}
        \centering
        \includegraphics[width=0.8\textwidth]{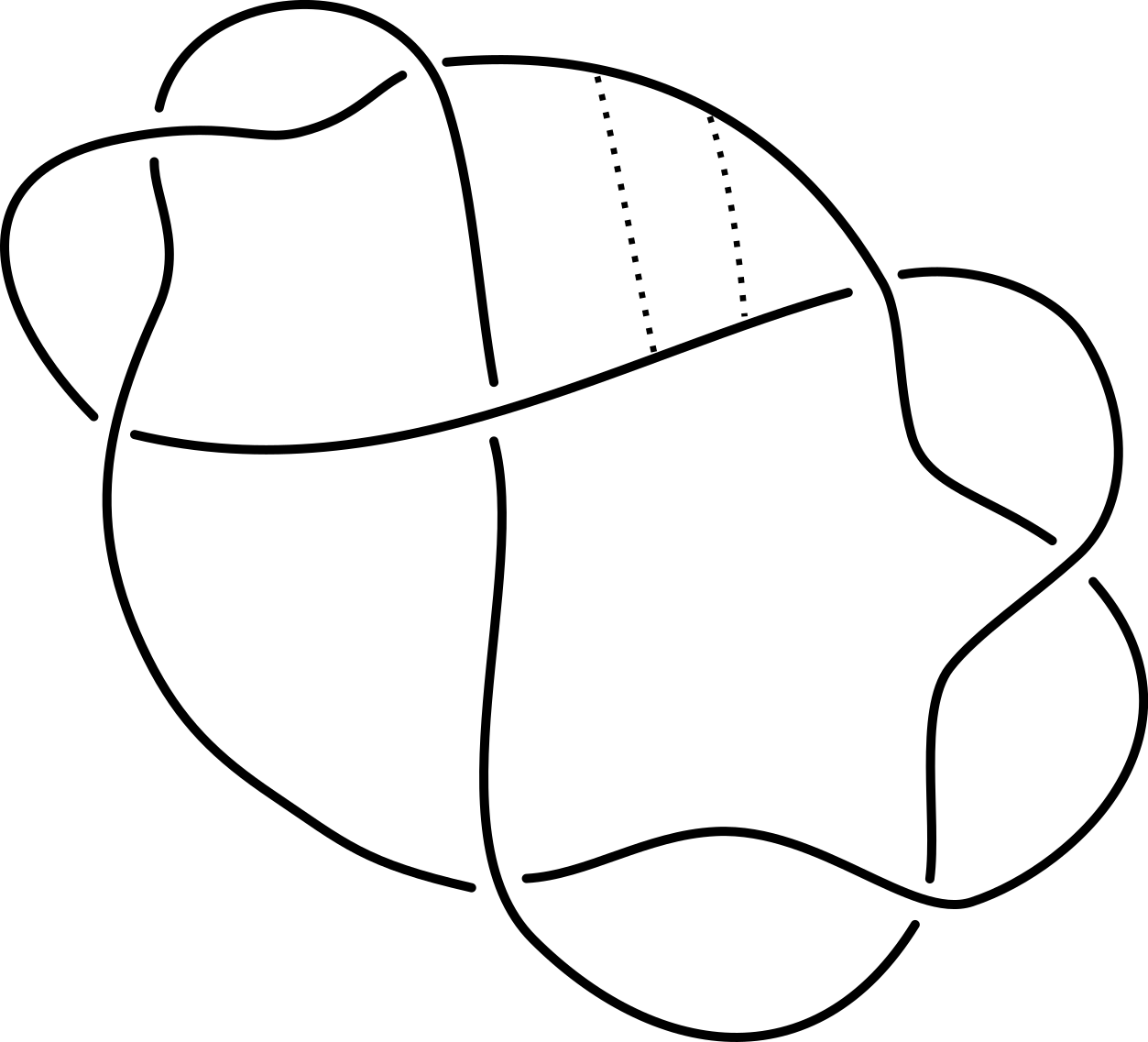} 
        \caption{$m8_4\longrightarrow$ L4a1$\{1\}$}
    \end{minipage}\hfill  
    \begin{minipage}{0.33\textwidth}
        \centering
        \includegraphics[width=0.95\textwidth]{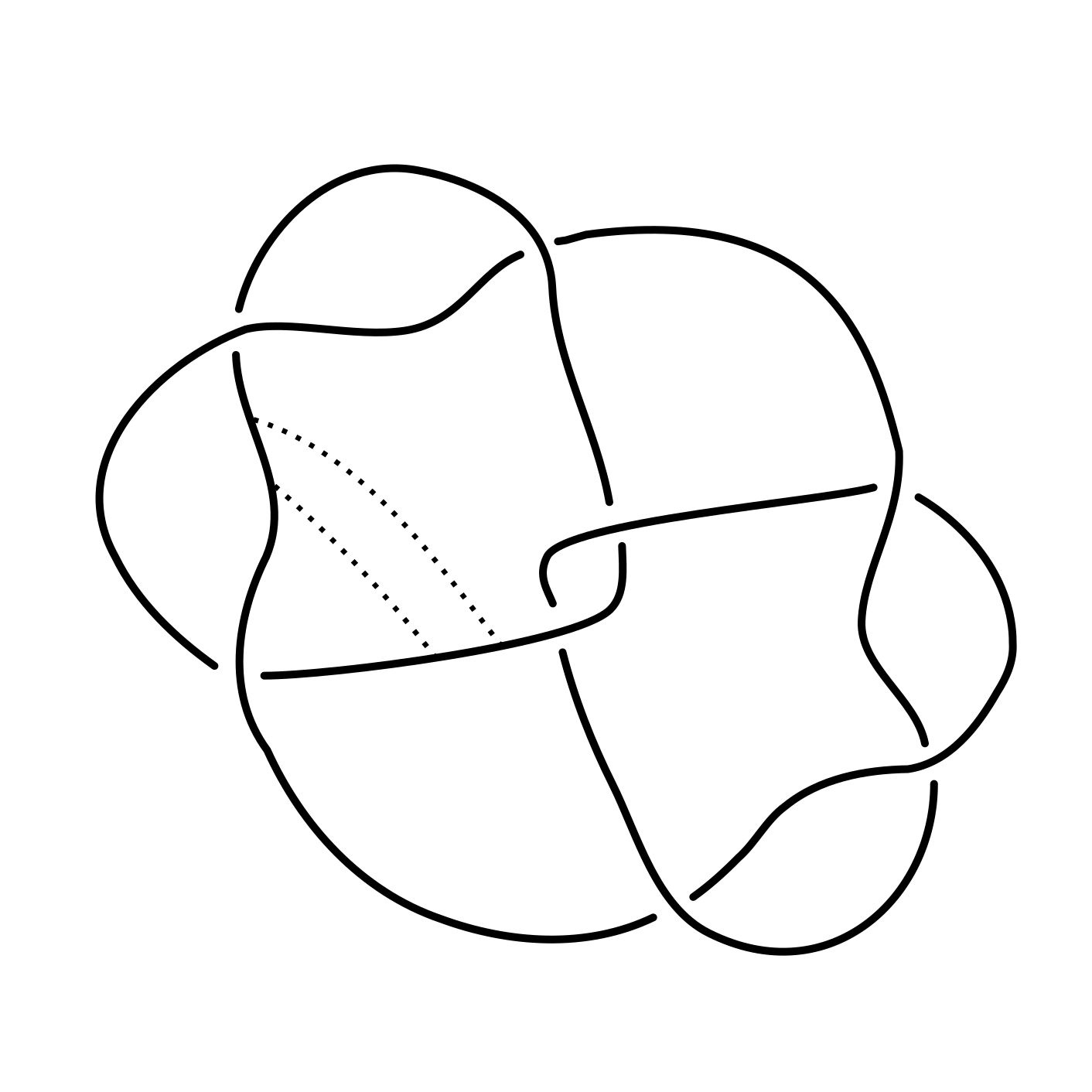} 
        \caption{$8_5\longrightarrow$ L4a1$\{1\}$}
    \end{minipage}
    \begin{minipage}{0.33\textwidth}
        \centering
        \includegraphics[width=0.75\textwidth]{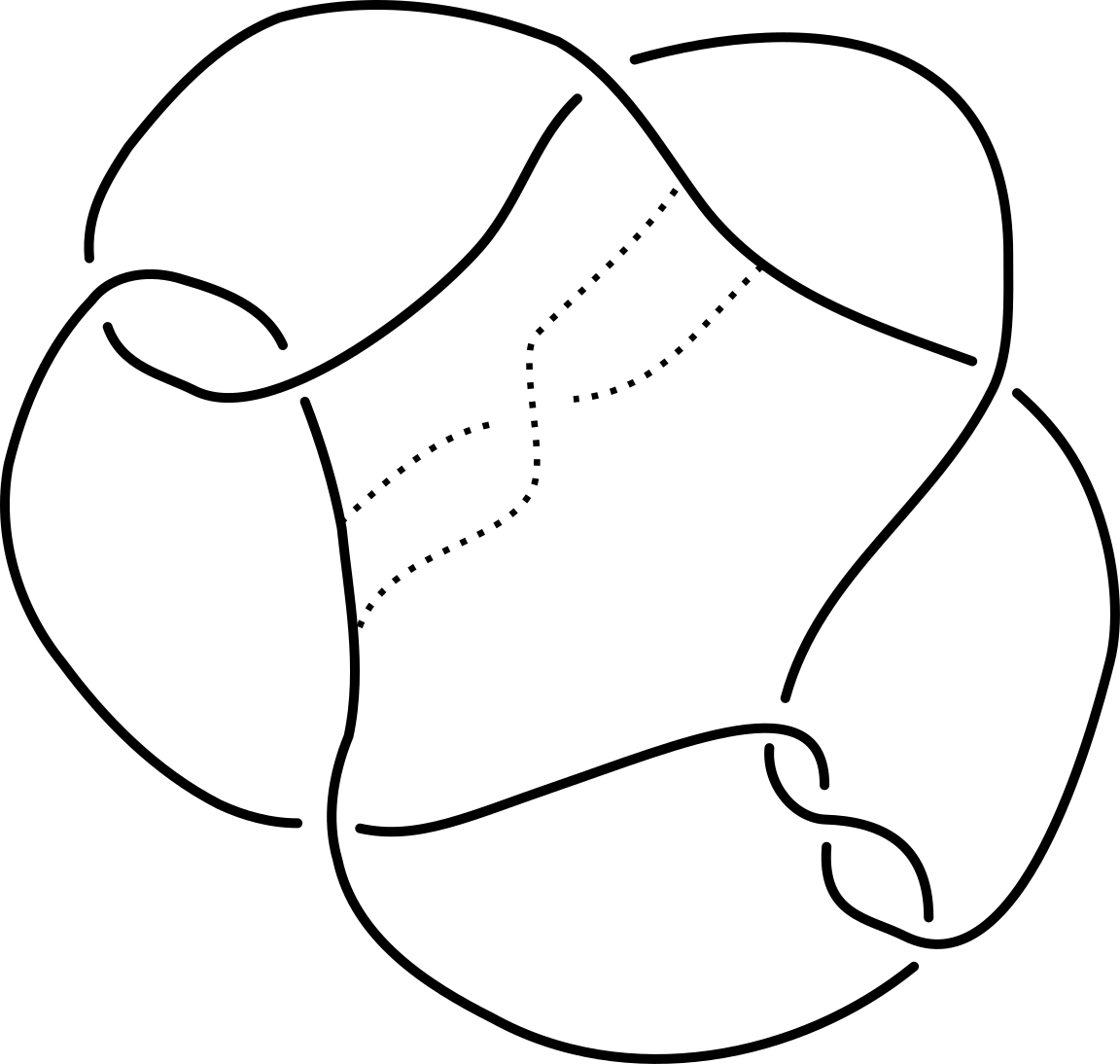} 
        \caption{$m8_6\longrightarrow$ $m$L2a1$\{1\}$}
    \end{minipage}
    \vspace{0.75cm}

    \centering
    \begin{minipage}{0.33\textwidth}
        \centering
        \includegraphics[width=0.7\textwidth]{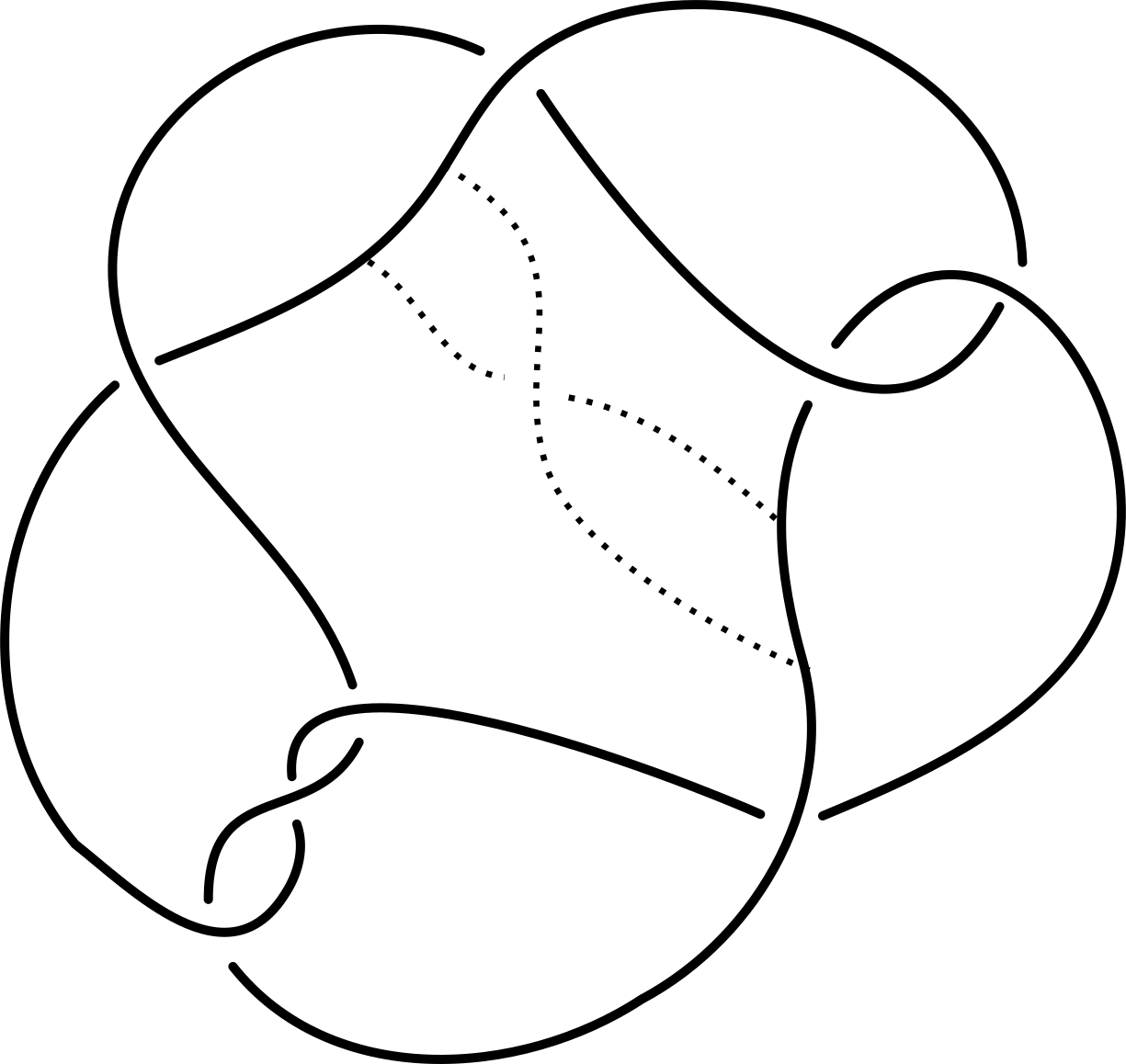} 
        \caption{$8_6\longrightarrow$ L2a1$\{1\}$}
    \end{minipage}\hfill  
    \begin{minipage}{0.33\textwidth}
        \centering
        \includegraphics[width=0.7\textwidth]{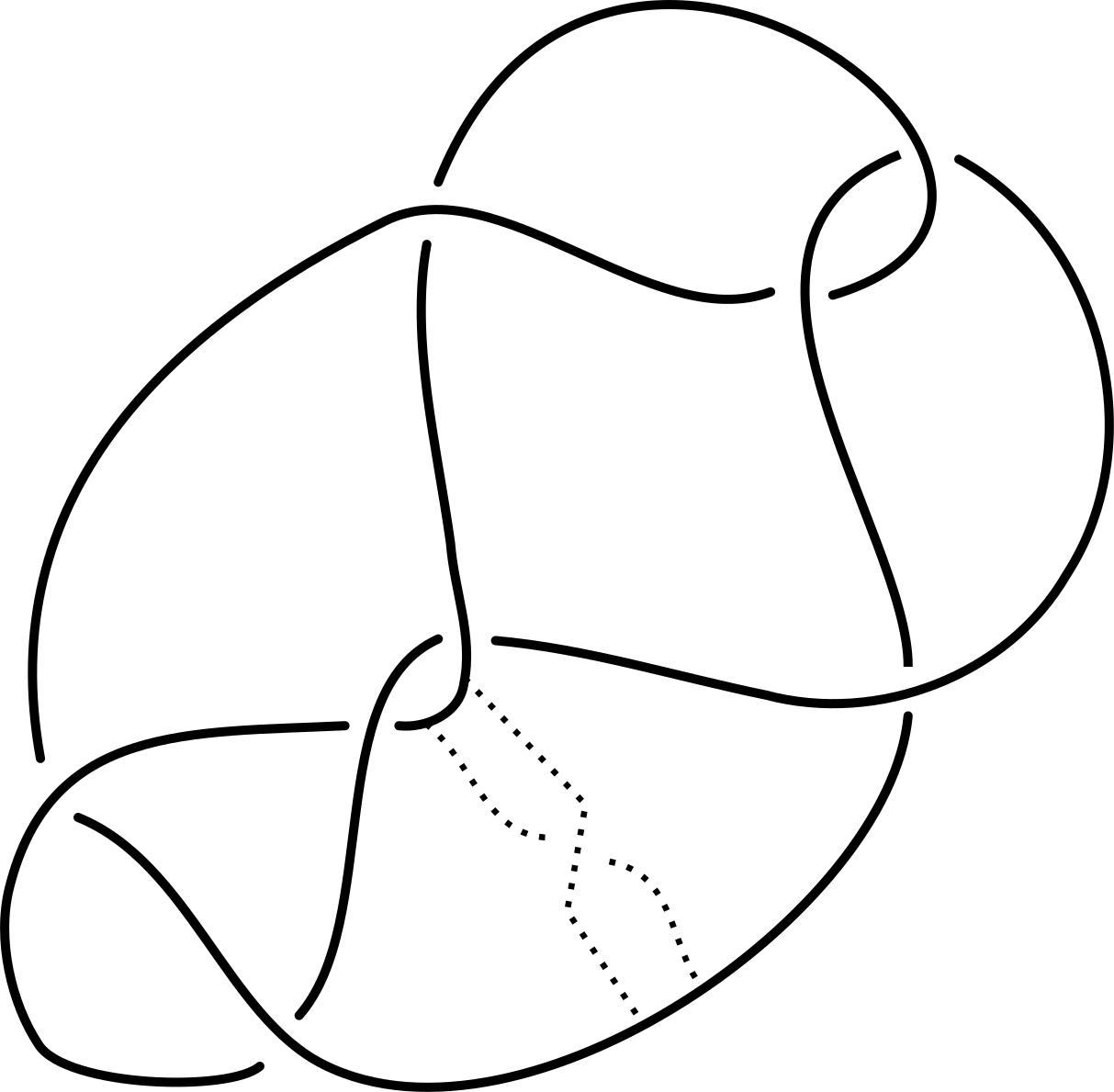} 
        \caption{$m8_{12}\longrightarrow$ L5a1$\{0\}$}
    \end{minipage}\hfill 
    \begin{minipage}{0.33\textwidth}
        \centering
        \includegraphics[width=0.75\textwidth]{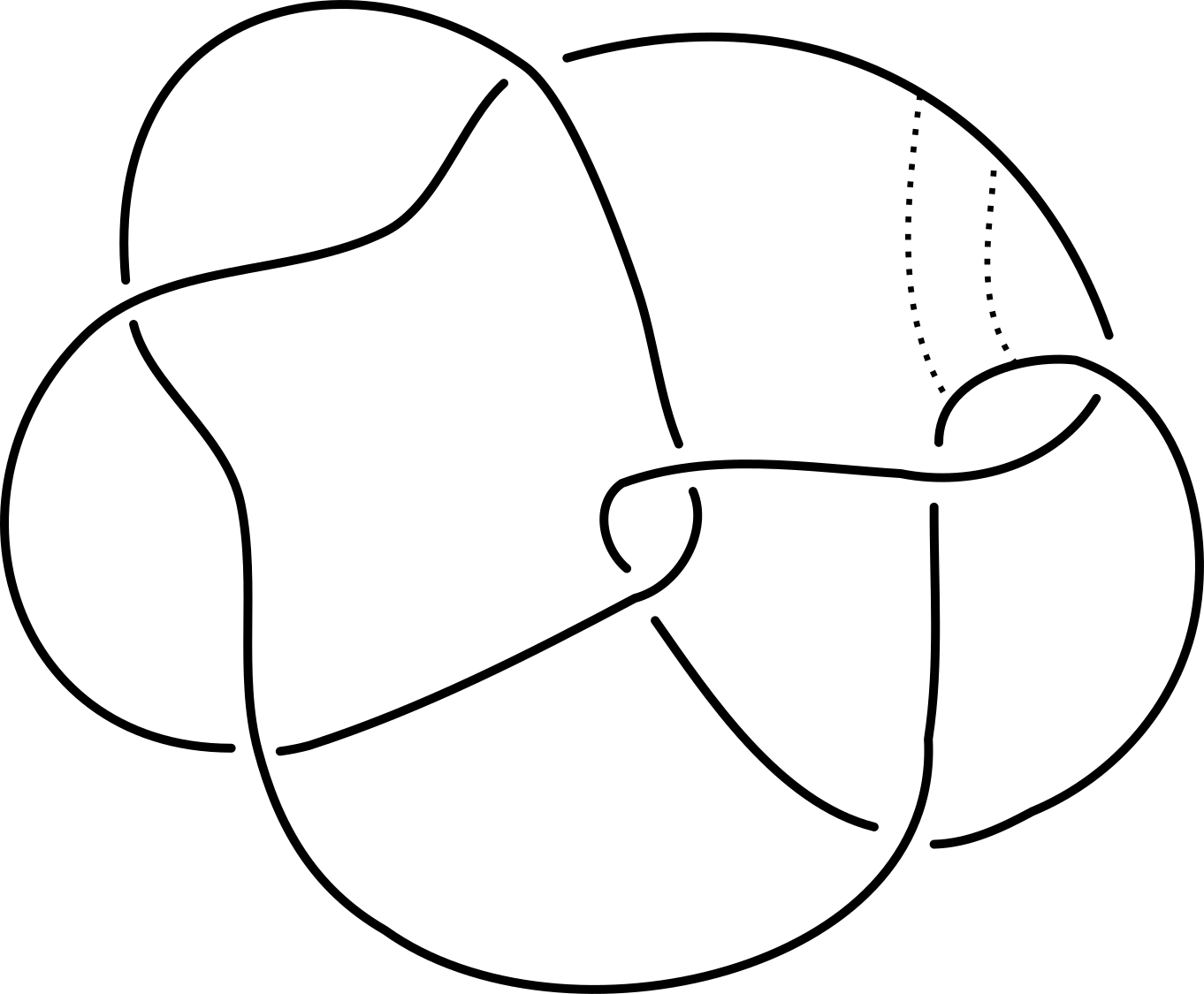} 
        \caption{$8_{19}\longrightarrow$ $m$L7n1$\{0\}$}
    \end{minipage}
    \vspace{0.75cm}
\end{figure}

\begin{figure}[h!]
    \centering
    \begin{minipage}{0.33\textwidth}
        \centering
        \includegraphics[width=0.7\textwidth]{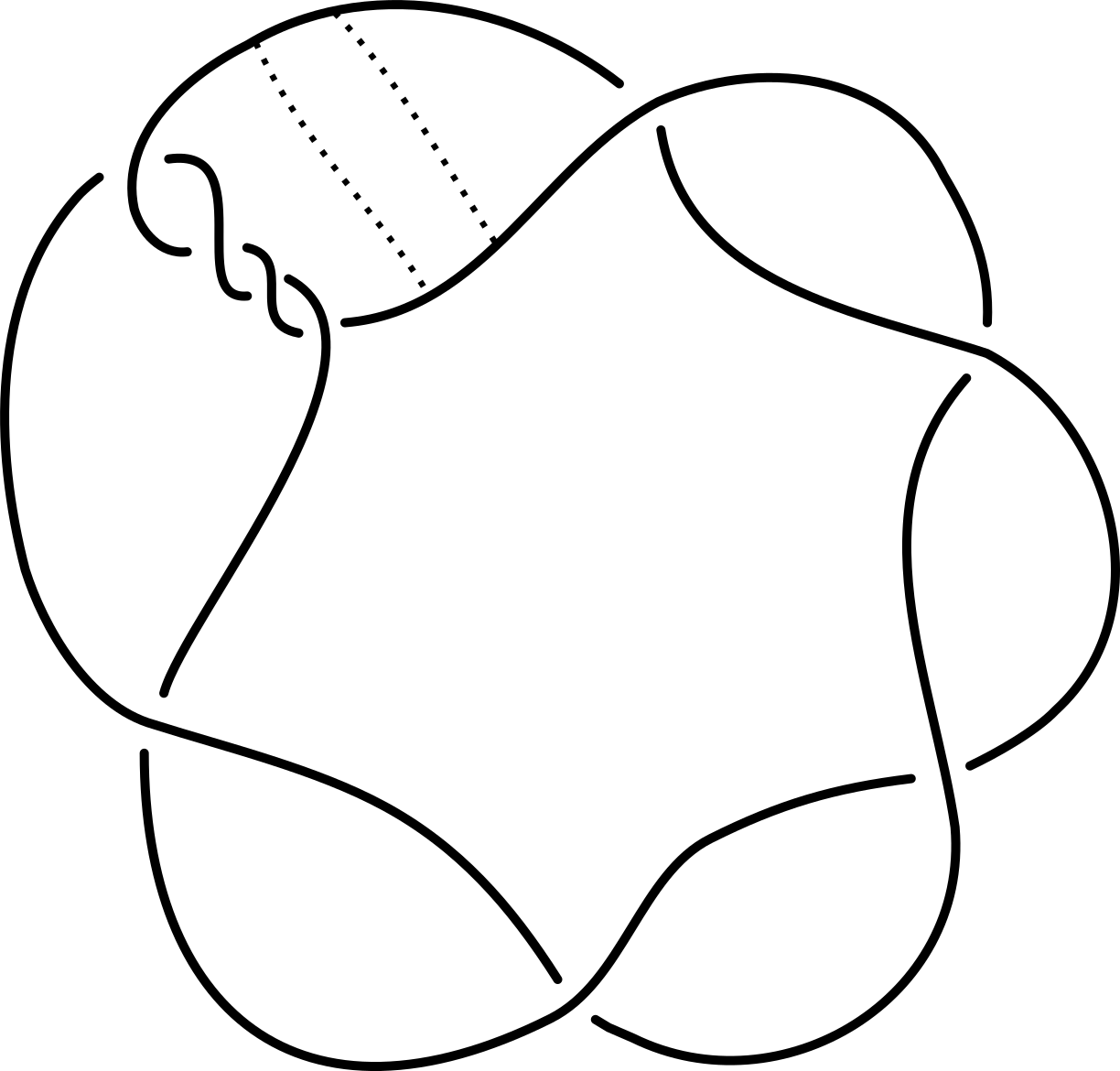} 
        \caption{$9_4\longrightarrow$ L4a1$\{1\}$}
    \end{minipage}\hfill
    \begin{minipage}{0.33\textwidth}
        \centering
        \includegraphics[width=0.6\textwidth]{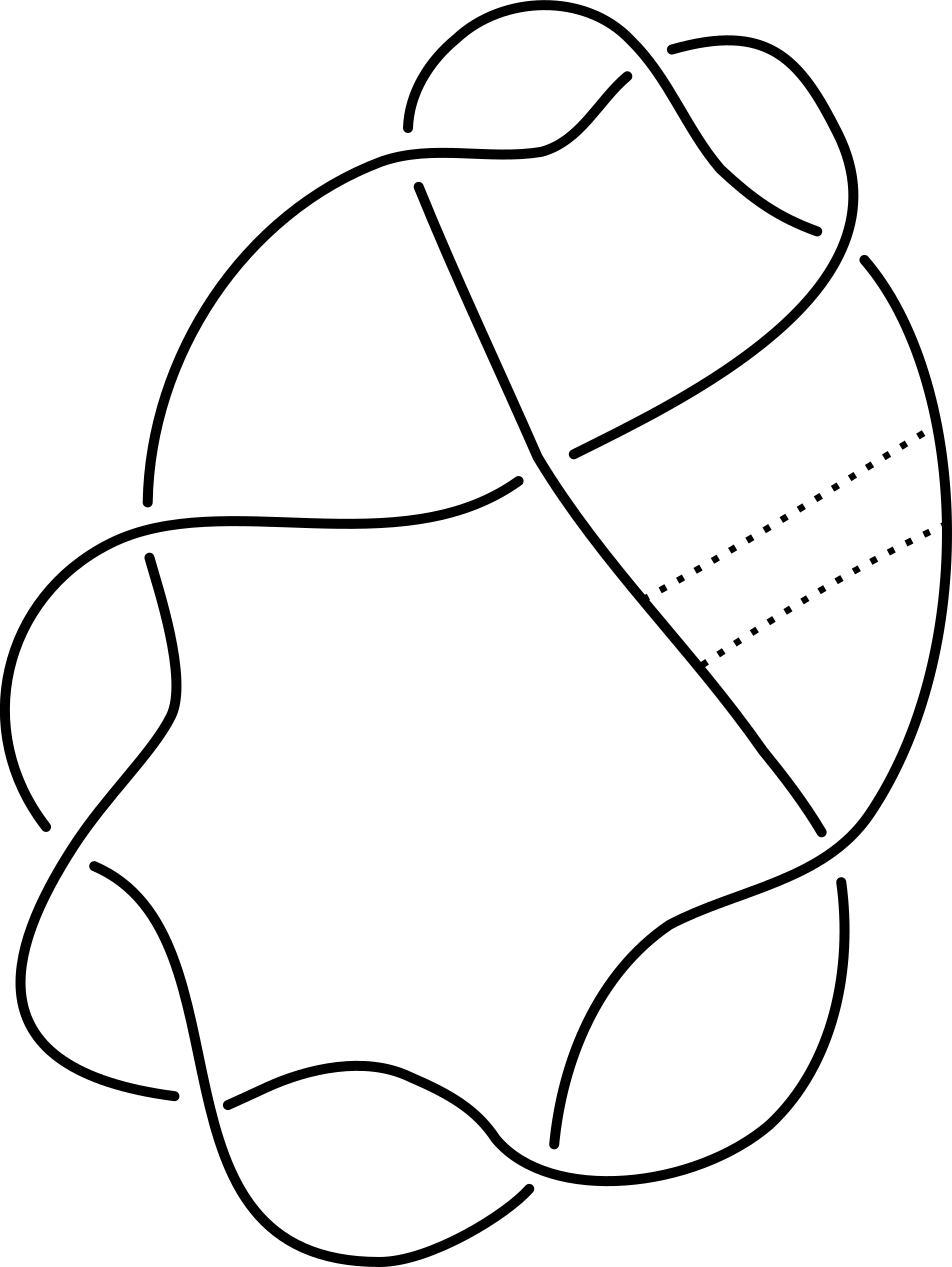} 
        \caption{$m9_5\longrightarrow$ L4a1$\{0\}$}
    \end{minipage}\hfill
    \begin{minipage}{0.33\textwidth}
        \centering
         \includegraphics[width=0.5\textwidth]{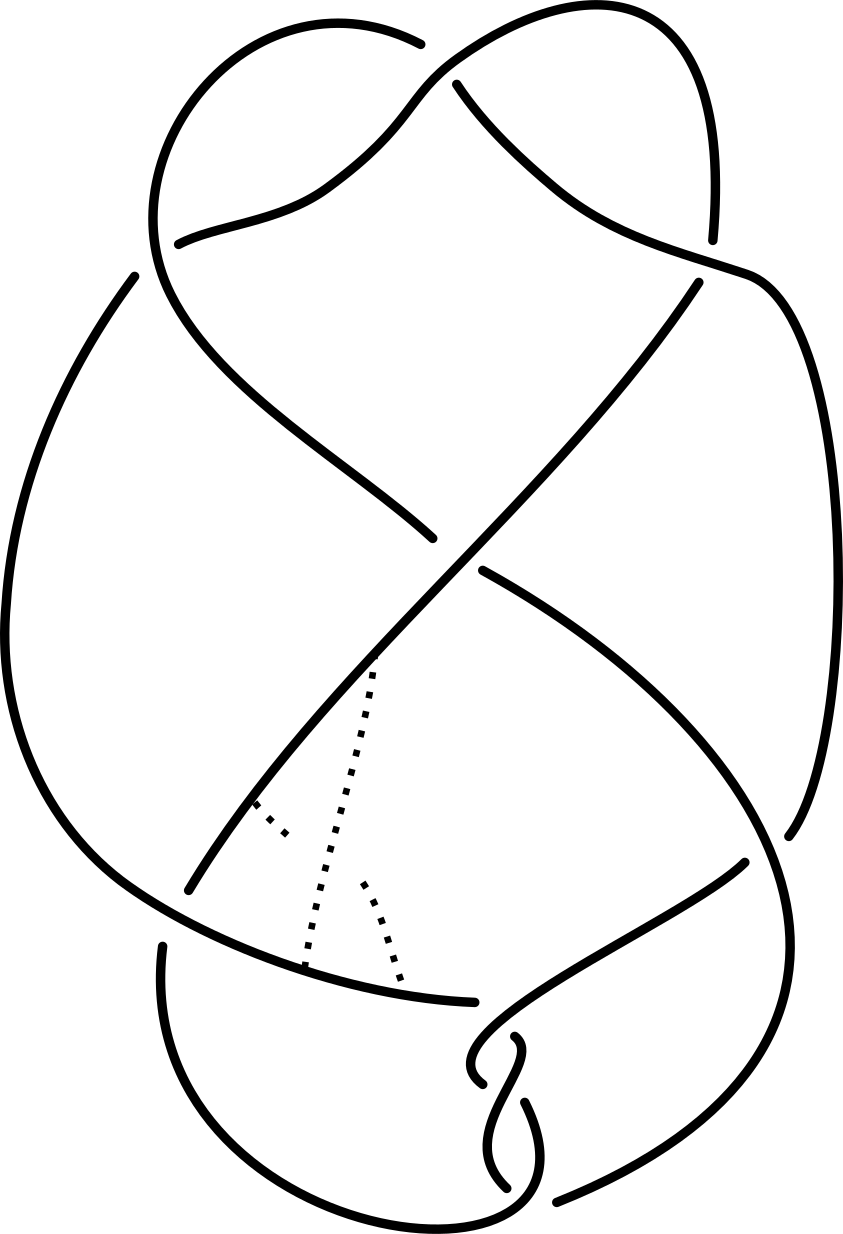} 
        \caption{$9_{13}\longrightarrow$ $3_1\#$ $m$L4a1$\{0\}$}
    \end{minipage}\hfill
    \vspace{0.75cm}
    
\end{figure}

\begin{figure}[h!]
    \centering
    \begin{minipage}{0.33\textwidth}
        \centering
        \includegraphics[width=0.7\textwidth]{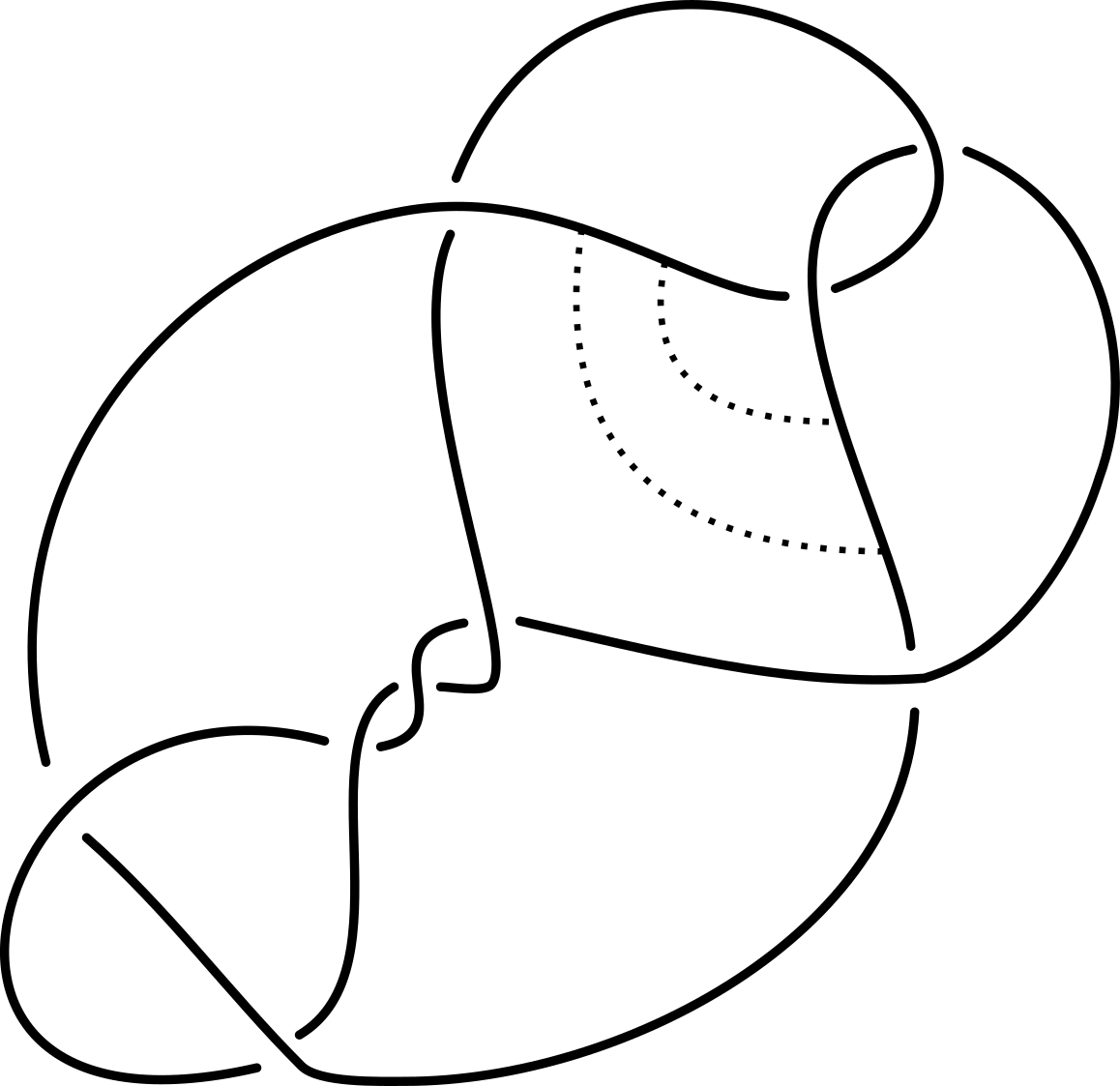} 
        \caption{$9_{15}\longrightarrow$ L7a4$\{0\}$}
    \end{minipage}\hfill
    \begin{minipage}{0.33\textwidth}
        \centering
        \includegraphics[width=0.6\textwidth]{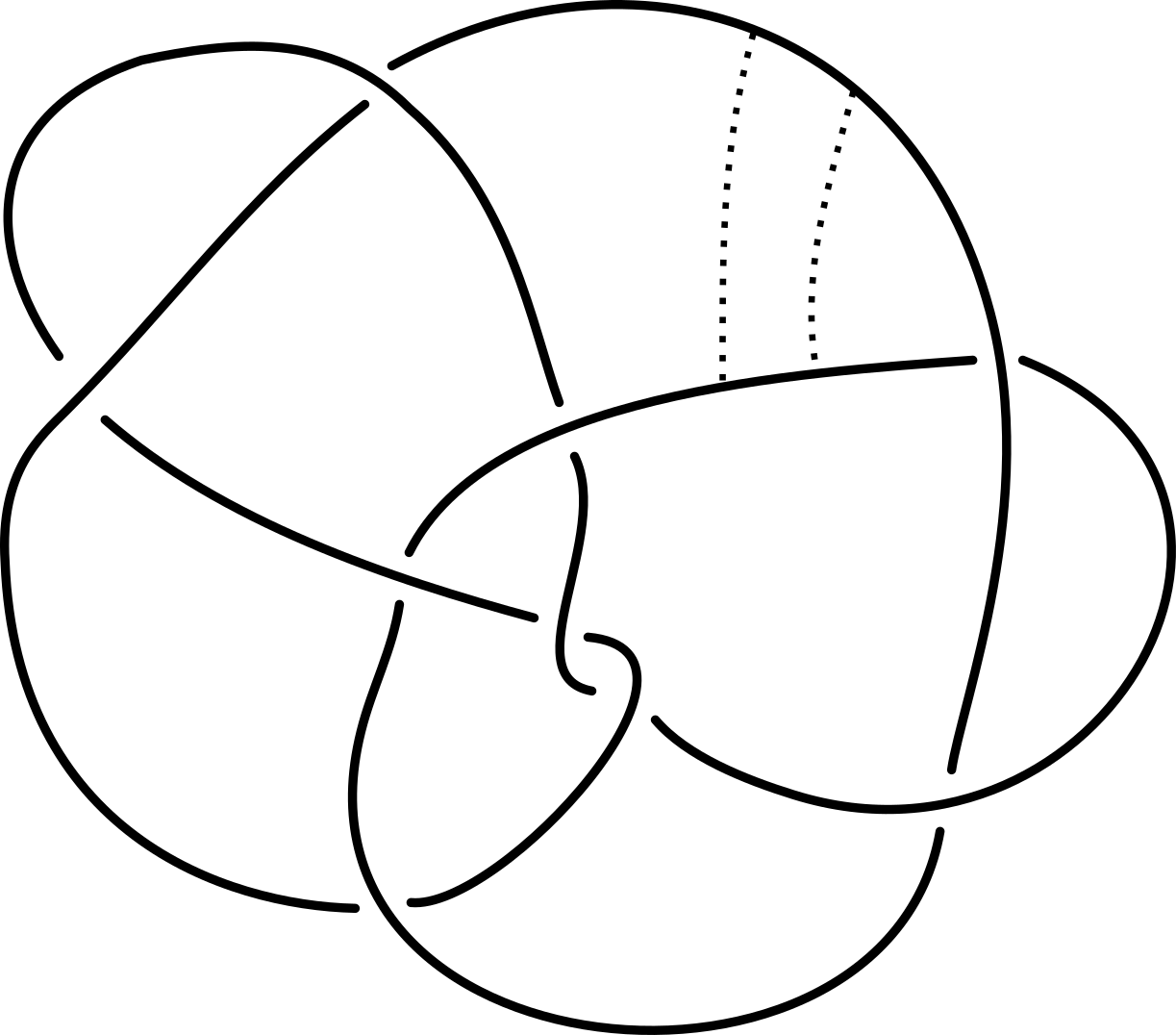} 
        \caption{$m9_{29}\longrightarrow$ L7a3$\{0\}$}
    \end{minipage}\hfill
    \begin{minipage}{0.33\textwidth}
        \centering
        \includegraphics[width=0.6\textwidth]{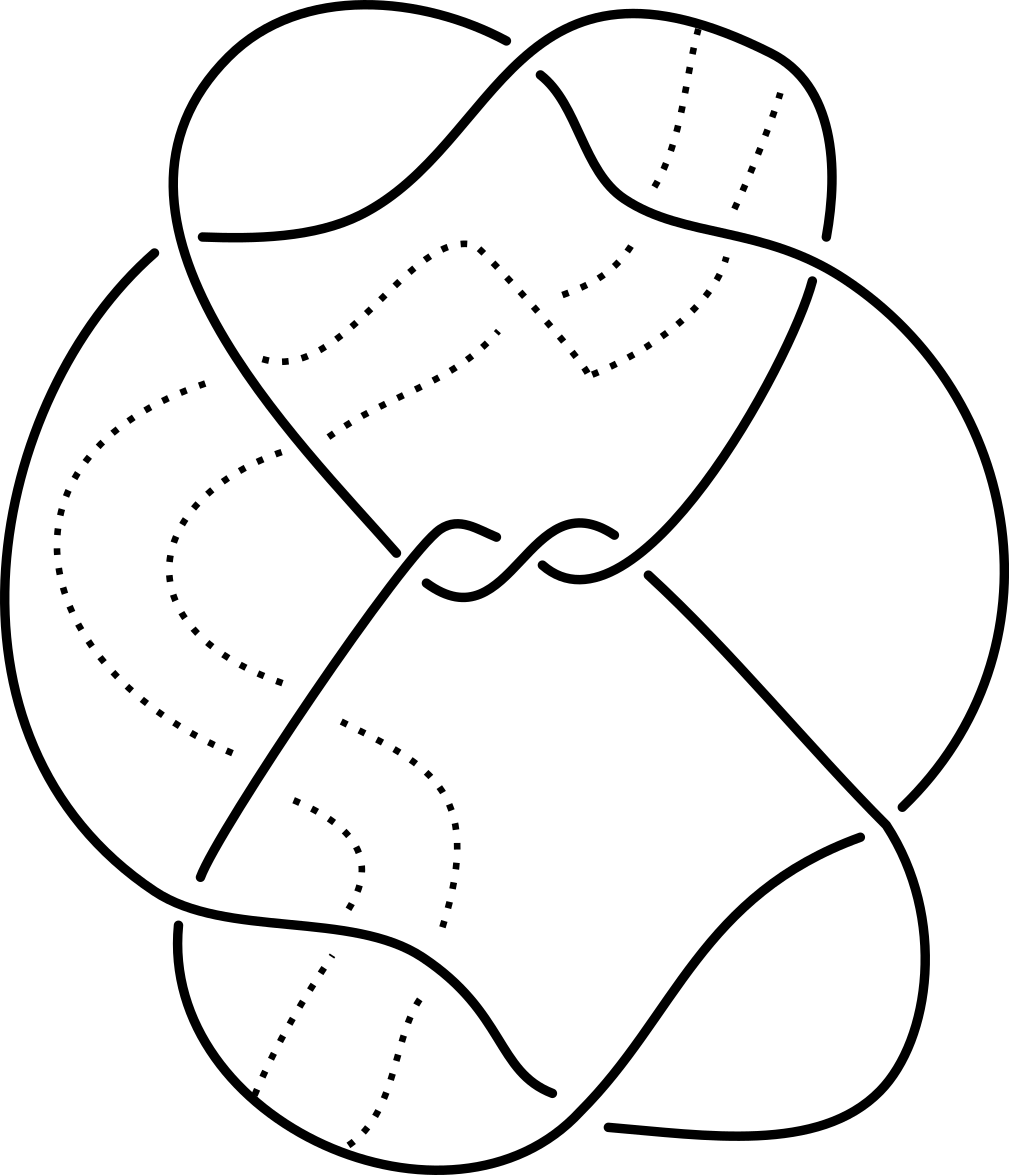} 
        \caption{$9_{35}\longrightarrow$ $3_1\#$ $m$L4a1$\{0\}$}
    \end{minipage}\hfill
    \vspace{0.75cm}
    
\end{figure}

\begin{figure}[h!]
    \centering
    \begin{minipage}{0.33\textwidth}
        \centering
        \includegraphics[width=0.7\textwidth]{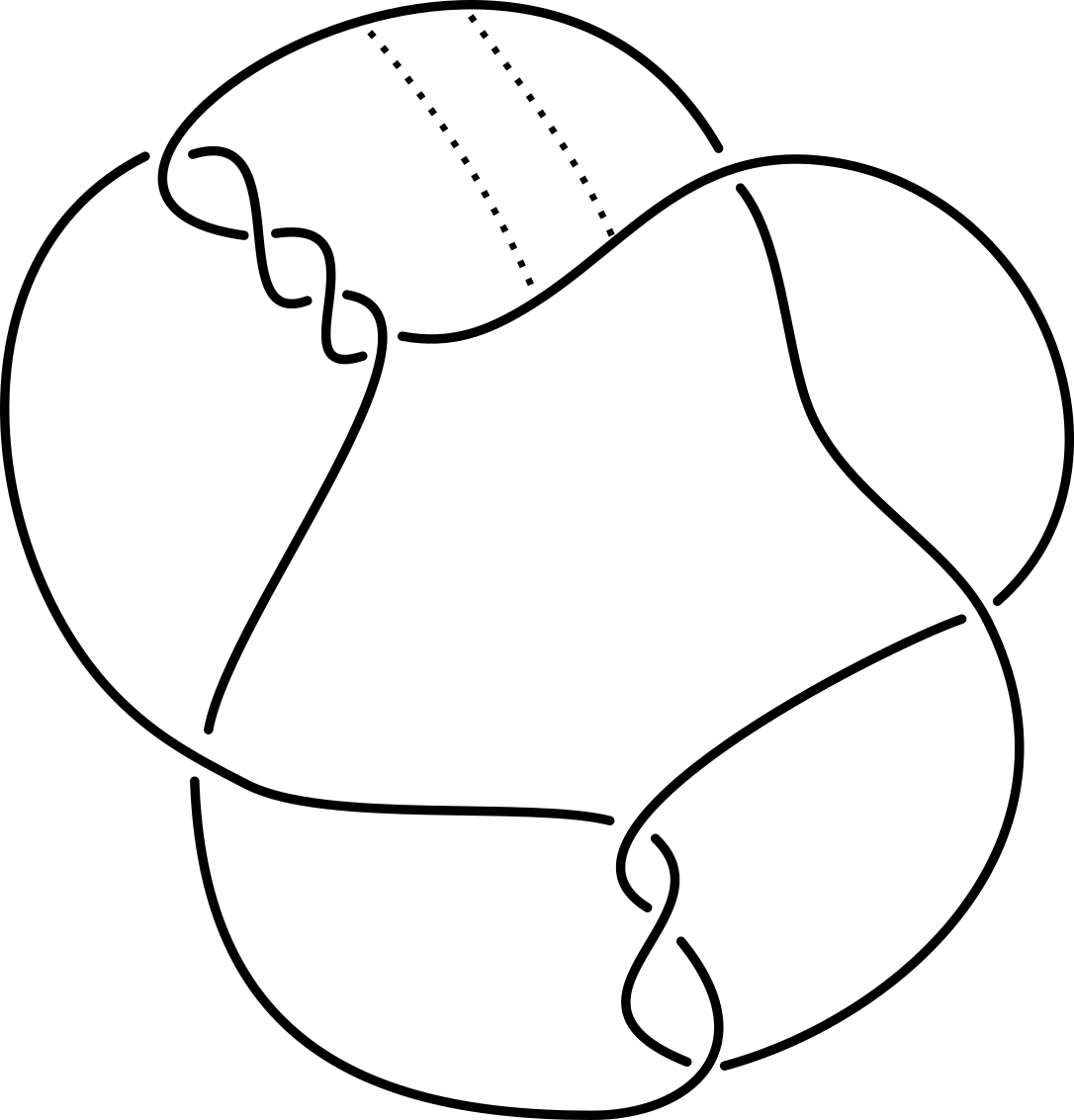} 
        \caption{$10_{11}\longrightarrow$ $3_1\#$ $m$L4a1$\{1\}$}
    \end{minipage}\hfill
    \begin{minipage}{0.33\textwidth}
        \centering
        \includegraphics[width=0.8\textwidth]{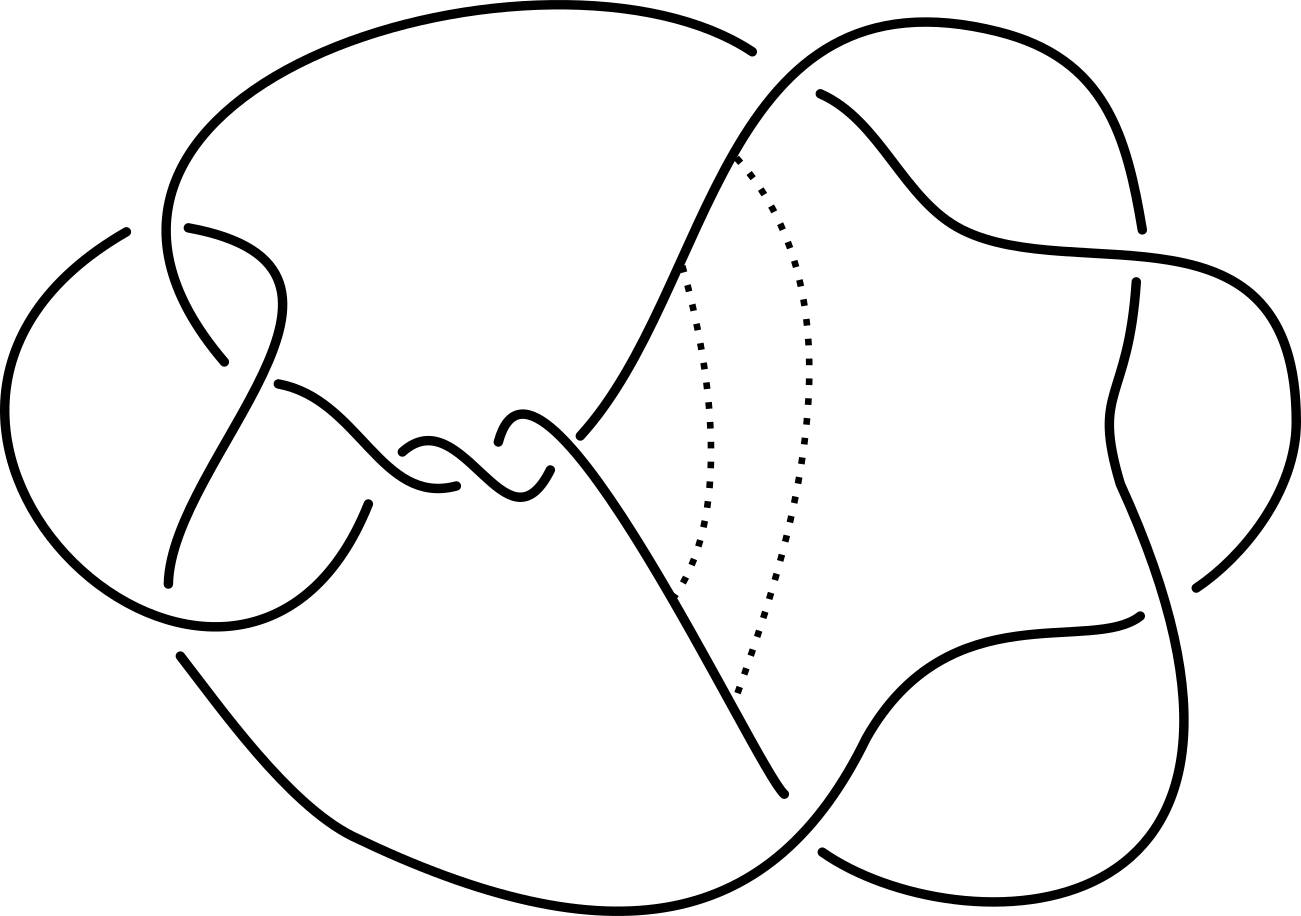} 
        \caption{$10_{12}\longrightarrow$ $3_1\#$ $m$L4a1$\{1\}$}
    \end{minipage}\hfill
    \begin{minipage}{0.33\textwidth}
        \centering
        \includegraphics[width=0.65\textwidth]{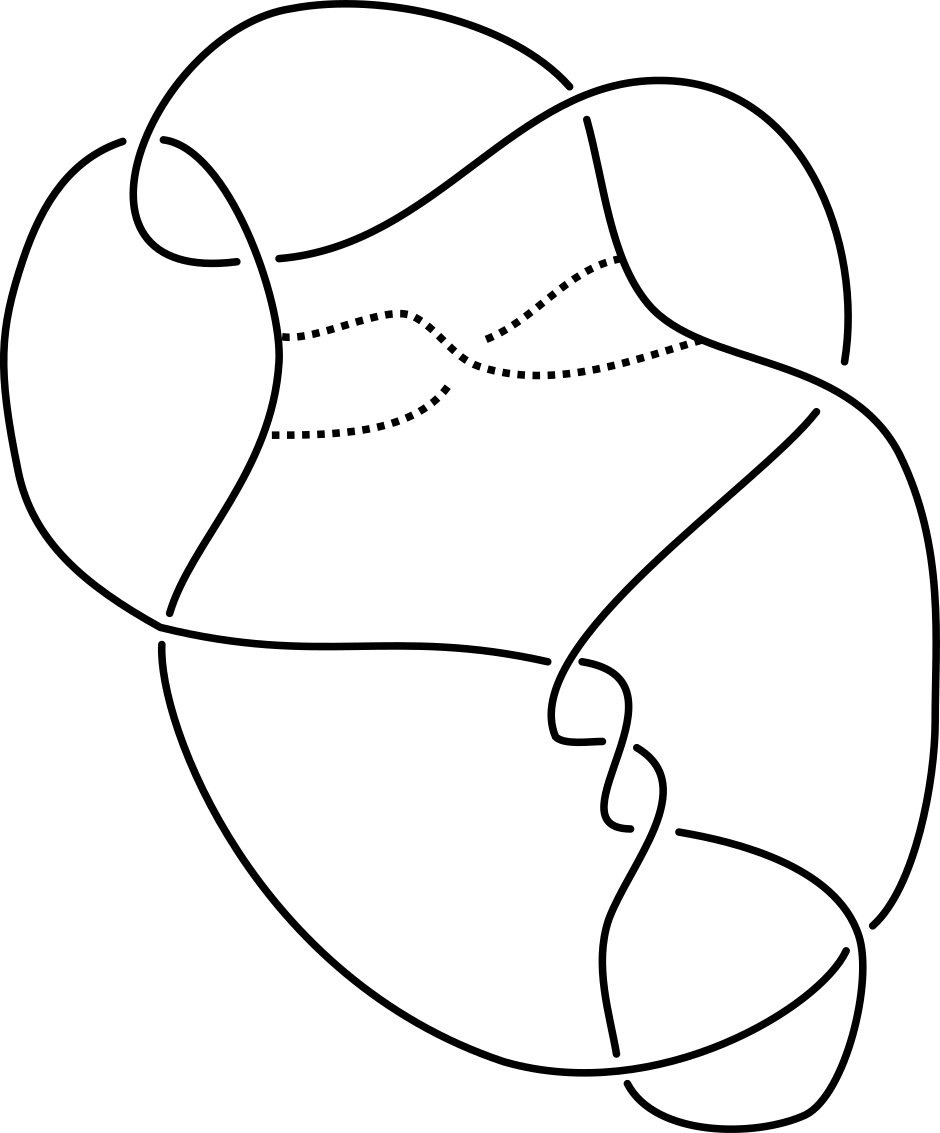}
        \caption{$10_{37}\longrightarrow$ $m$L5a1$\{0\}$}
    \end{minipage}\hfill
    \vspace{0.75cm}
    
\end{figure}

\end{document}